\newtheorem{thm}{Theorem}[section]
\newtheorem{cor}[thm]{Corollary}
\newtheorem{lem}[thm]{Lemma}
\newtheorem{prop}[thm]{Proposition}
\theoremstyle{remark}
\newtheorem*{rem}{Remark}
\newcounter{remarkscounter}
\numberwithin{equation}{section}
\newcommand{\A}{\mathbb{A}}
\newcommand{\GL}{\mathrm{GL}}
\newcommand{\gl}{\mathfrak{gl}}
\newcommand{\SL}{\mathrm{SL}}
\newcommand{\ZZ}{\mathbb{Z}}
\newcommand{\QQ}{\mathbb{Q}}
\newcommand{\lto}{\longrightarrow}
\newcommand{\OO}{\mathcal{O}}
\newcommand{\CC}{\mathbb{C}}
\newcommand{\RR}{\mathbb{R}}
\newcommand{\GG}{\mathbb{G}}
\newcommand{\Sp}{\mathrm{Sp}}
\newcommand{\quash}[1]{}
\theoremstyle{definition}
\renewcommand{\bar}{\overline}
\numberwithin{equation}{subsection}
\newcommand{\one}{\mathbbm{1}}
\begin{document}

\title[A Summation formula for triples of quadratic spaces]{A summation formula for triples of quadratic spaces}

\author{Jayce R. Getz}
\address{Department of Mathematics\\
Duke University\\
Durham, NC 27708}
\email{jgetz@math.duke.edu}

\author{Baiying Liu}
\address{Department of Mathematics\\
Purdue University\\
West Lafayette, IN 47907}
\email{liu2053@purdue.edu}

\subjclass[2010]{Primary 11F70, Secondary 11F66}

\thanks{The first named author is thankful for partial support provided by NSF grant DMS 1405708. The second named author is partially supported by NSF grant DMS 1702218 and by a start-up fund from the Department of Mathematics at Purdue University.
A portion of this work was completed during the first author's stay at the Institute for Advanced Study, and he thanks the Charles Simonyi endowment for their support.
Any opinions, findings, and conclusions or recommendations expressed in this material are those of the authors and do not necessarily reflect the views of the National Science Foundation.}

\begin{abstract}
Let $V_1,V_2,V_3$ be a triple of even dimensional vector spaces over a number field $F$ equipped with nondegenerate quadratic forms $\mathcal{Q}_1,\mathcal{Q}_2,\mathcal{Q}_3$, respectively.  Let
\begin{align*}
Y  \subset \prod_{i=1}V_i
\end{align*}
be the closed subscheme consisting of $(v_1,v_2,v_3)$ on which $\mathcal{Q}_1(v_1)=\mathcal{Q}_2(v_2)=\mathcal{Q}_3(v_3)$.  Motivated by conjectures of Braverman and Kazhdan and related work of Lafforgue, Ng\^o, and Sakellaridis we prove an analogue of the Poisson summation formula for certain functions on this space.  
\end{abstract}

\maketitle

\tableofcontents

\section{Introduction} \label{sec:intro}

Godement and Jacquet \cite{GodementJacquetBook}, generalizing Tate's thesis, proved the functional equation and analytic continuation of the standard $L$-function of an automorphic representation of $\GL_n$
as a consequence of the Poisson summation formula on $\mathfrak{gl}_n$.  This formula states that for $F$ a number field with ring of adeles $\A_F$, $\psi:F \backslash \A_F \to \CC^\times$ a nontrivial additive character, $f \in \mathcal{S}(\mathfrak{gl}_n(\A_F))$ and $g \in \GL_n(\A_F)$ one has
\begin{align} \label{glnPS}
\sum_{\gamma \in \mathfrak{gl}_n(F)}f(\gamma g) =\sum_{\gamma \in \mathfrak{gl}_n(F)}\lvert \det g \rvert^{-n}\widehat{f}(g^{-1}\gamma)\,,
\end{align}
where $\widehat{f}(X):=\int_{\gl_n(\A_F)}f(Y)\psi(\mathrm{tr}\, XY)dY$ is the Fourier transform of $f$.   

Braverman and Kazhdan \cite{BK-lifting} have suggested that this is but the first case of a general phenomenon.  Let $G$ be a connected split reductive group over $F$.
  For each representation 
$$
r:{}^LG^{\circ} \lto \GL_n
$$
of the neutral component of the $L$-group ${}^LG$ of $G$ satisfying certain assumptions, they conjectured the existence of a corresponding Fourier transform and a Poisson summation formula.  The summation formula should imply the functional equation and meromorphic continuation of the Langlands $L$-function $L(s,\pi,r)$  attached to $r$ and a cuspidal automorphic representation $\pi$ of $G(\A_F)$.  There has been a great deal
of interest in the conjectures of Braverman and Kazhdan and related approaches recently, and we mention in particular the work in
\cite{BouthierNgoSakellaridis,Cheng:Ngo:BK,Getz:RSMonoid,GetzBK,
LafforgueJJM,WWLiSat,WWLi:Zeta,WWLi:Towards,SakellaridisSat,SakellaridisSph,Shahidi:LF,Shahidi:FT}.  Ng\^o has emphasized the relationship between the approach of Braverman and Kazhdan and Langlands' beyond endoscopy proposal \cite{LanglandsBE}, as well as the relationship between Braverman and Kazhdan's work and Vinberg's theory of reductive monoids \cite{Vinberg:reductive:semigroups}.  The basic observation here linking Godement and Jacquet's theory and the theory of monoids is that $\mathfrak{gl}_n$ is a monoid with unit group $\GL_n$.

However, to establish the functional equation and meromorphic continuation of $L$-functions, the monoidal structure, though convenient, is not strictly necessary.  If one is studying $L$-functions of cuspidal automorphic representations of $G(\A_F)$, the bare minimum one needs is a $G$-scheme with a Zariski-open orbit and a summation formula like \eqref{glnPS} for the $G$-scheme.  This is what is really used in \cite[\S 12]{GodementJacquetBook}, and there are other examples in which the $G$-variety in question is spherical (but not necessarily a reductive monoid) that are investigated in \cite{SakellaridisSph}.  We note that Garrett's integral representation of the triple product $L$-function, which plays a key role in this paper, is discussed in \S 4.5 of loc.~cit.  

In the present paper we focus on proving summation formulae for schemes 
admitting natural actions of reductive groups with Zariski open orbits, generalizing the standard representation of $\GL_n$ in the Godement-Jacquet case. As pointed out to the authors by Y.~Sakellaridis, these summation formulae are the first of their kind, in the sense that this is the first case where such a summation formula has been proven when the underlying scheme is not a flag manifold (the case of flag manifolds is treated in \cite{BK:normalized}).  

Let $d_1,d_2,d_3$ be three positive even integers, let $V_i=\GG_a^{d_i}$, $V:=\oplus_{i=1}^3V_i$.  Then $V(F)$ is an $F$-vector space.  For each $i$ let $\mathcal{Q}_i$ be a nondegenerate quadratic form on $V_i(F)$.
Let
 $Y \subset V$ be the subscheme whose points in an $F$-algebra $R$ are given by 
\begin{align*}
Y(R):&=\{ (y_1,y_2,y_3) \in V(R):\mathcal{Q}_1(y_1)=\mathcal{Q}_2(y_2)=\mathcal{Q}_3(y_3)\}\,.
\end{align*}
Let $J_{i}$ be the matrix of $\mathcal{Q}_i$ (see \eqref{Ji}) and let
\begin{align} \label{H}
H(R):=\left\{ (g_1,g_2,g_3) \in \prod_{i=1}^3\GL_{d_i}(R): g_i J_{i}^{-1}({}^tg_i)J_{i}=\lambda I_{d_i} \textrm{ for some } \lambda \in R^\times \right\}.
\end{align}
This is a subgroup of the product of the orthogonal similitude groups attached to the $\mathcal{Q}_i$.  It comes equipped with a character 
\begin{align} \label{lambda}
\lambda:H \lto \GG_m\,,
\end{align}
whose value on $(g_1,g_2,g_3)$ is the similitude norm of $g_1$ (which is equal to the similitude norms of $g_2$ and $g_3$ by definition).  
It is easy to see that the natural action of $H$ on $V$ preserves $Y$.  Using Witt's theorem it is also easy to see that the action of $H$ on $Y$ has a Zariski-open  orbit $Y^{\mathrm{ani}}$, namely the orbit of all vectors $(v_1,v_2,v_3)$ such that $\mathcal{Q}_i(v_i) \neq 0$.  We let $Y^{\mathrm{sm}} \subset Y$ be the smooth locus, it is precisely the subscheme of triples $(y_1,y_2,y_3)$ such that no two $y_i$ are zero; thus we have a triple of schemes
$$
Y^{\mathrm{ani}} \subset Y^{\mathrm{sm}} \subset Y\,,
$$
all preserved by the action of $H$.

 Our goal in this paper is to formulate and prove a Poisson summation formula for $Y(F)$.  Let $\mathrm{Sp}_6$ be the symplectic group (on a $6$-dimensional vector space) and let $K \leq \mathrm{Sp}_6(\A_F)$ be a maximal compact subgroup such that $K^\infty$ is $\mathrm{Sp}_6(\A_F^\infty)$-conjugate to $\mathrm{Sp}_6(\widehat{\OO}_F)$.
Let $\psi:F \backslash \A_F \to \CC^\times$ be a nontrivial character.  Let $P \leq \mathrm{Sp}_6$ be the standard Siegel parabolic subgroup (see \eqref{Siegel}) and let $X:=[P,P] \backslash \mathrm{Sp}_6$.  Using an idea of
Braverman and Kazhdan, we defined a Schwartz space in \cite{Getz:Liu:BK}:
$$
\mathcal{S}_{BK}(X(\A_F),K)\,.
$$
It is a subspace of the space of smooth functions on $X(\A_F)$ that are $K$-finite.  
We also defined a  Fourier transform
\begin{align*}
\mathcal{F}:=\mathcal{F}_{BK,\psi}:\mathcal{S}_{BK}(X(\A_F),K) \lto \mathcal{S}_{BK}(X(\A_F),K)
\end{align*}
(see \S \ref{ssec:Schwartz:spaces}).  

Let $G:=\mathrm{SL}_2^3$.  
There is a natural embedding $\mathrm{SL}_2^{3} \to \mathrm{Sp}_6$ and we sometimes identify  $G$ with its image (see \eqref{Gembed}).
The quotient $X(F) /G(F)$ is a finite set.  The unique Zariski open orbit admits a representative $\gamma_0$ such that the stabilizer in $G$ of $\gamma_0$ is the unipotent group whose points in an $F$-algebra $R$ are given by 
\begin{align} \label{N0}
N_0(R):&=\left\{\Big(\left(\begin{smallmatrix} 1 & t_1 \\ & 1 \end{smallmatrix} \right),\left(\begin{smallmatrix} 1 & t_2\\ & 1 \end{smallmatrix} \right),\left(\begin{smallmatrix} 1 & t_3\\  & 1 \end{smallmatrix} \right) \Big): t_i \in R, \sum_{i=1}^3t_i=0\right\}
\end{align}
(see \S \ref{ssec:BK}).
Let $\mathcal{S}(V(\A_F))$ be the usual Schwartz space. For 
$$
(f_1,f_2) \in \mathcal{S}_{BK}(X(\A_F),K) \times \mathcal{S}(V(\A_F)) \quad \textrm{ and } \quad y \in Y^{\mathrm{sm}}(\A_F)\,,
$$
 define
\begin{align}
I(f_1,f_2)(y)=\int_{N_{0}(\A_F) \backslash G(\A_F)} f_1(\gamma_{0}g)\rho(g)f_2(y)dg\,.
\end{align}
Here $\rho:=\rho_\psi$ is the Weil representation (see \S \ref{ssec:Weil}).  The appearance of the Weil representation is the reason we have assumed that the dimensions of the $V_i$ are even; if some of them were odd then we would have to work with a product of symplectic and metaplectic groups instead of $G$.

Our summation formula is as follows:

\begin{thm} \label{thm:intro}
For $(f_1,f_2) \in \mathcal{S}_{BK}(X(\A_F),K) \times \mathcal{S}(V(\A_F))$ such that $f_1$ and $\mathcal{F}(f_1)$ satisfy  \eqref{A:vanish} and $f_2$ satisfies 
\eqref{vanish}, one has
\begin{align*}
\sum_{\gamma \in Y^{\mathrm{sm}}(F)}I(f_1,f_2)(\gamma)=\sum_{\gamma \in Y^{\mathrm{sm}}(F)}I(\mathcal{F}(f_1),f_2)(\gamma)\,.
\end{align*}
\end{thm}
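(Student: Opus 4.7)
The plan is to unfold the left-hand side against a theta-type series on $G(\A_F)$ and reduce the identity to the Poisson summation formula for $X$ established in \cite{Getz:Liu:BK}, applied fiberwise over $[G] := G(F)\backslash G(\A_F)$. First, under hypotheses \eqref{A:vanish} and \eqref{vanish}, swap the sum over $\gamma \in Y^{\mathrm{sm}}(F)$ with the integral defining $I(f_1,f_2)$ to write
\begin{align*}
\sum_{\gamma \in Y^{\mathrm{sm}}(F)} I(f_1,f_2)(\gamma) = \int_{N_0(\A_F)\backslash G(\A_F)} f_1(\gamma_0 g)\,\Theta(g)\,dg,
\end{align*}
where $\Theta(g) := \sum_{\gamma \in Y^{\mathrm{sm}}(F)} (\rho(g) f_2)(\gamma)$. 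Using the formula $(\rho(n(u))f)(v) = \psi(u\mathcal{Q}(v))f(v)$ for the upper unipotent action of $\mathrm{SL}_2$, together with the defining condition $u_1+u_2+u_3=0$ of $N_0$ and the coincidence $\mathcal{Q}_1(y_1)=\mathcal{Q}_2(y_2)=\mathcal{Q}_3(y_3)$ on $Y$, one checks that $\Theta$ is $N_0(\A_F)$-invariant; the integrand therefore descends to the open $G(\A_F)$-orbit of $\gamma_0$ in $X(\A_F)$.

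Second, unfold to $[G]$ via the standard identity for $N_0(\A_F)$-invariant integrands. Since $N_0$ is the stabilizer of $\gamma_0$ in $G$, the inner sum $\sum_{\delta \in N_0(F)\backslash G(F)} f_1(\gamma_0\delta g)$ becomes $\sum_{x \in X^{\mathrm{open}}(F)} f_1(xg)$, where $X^{\mathrm{open}} \subset X$ denotes the open $G$-orbit of $\gamma_0$. Under \eqref{A:vanish}, extend this to a sum over all of $X(F)$, with contributions from the non-open $G(F)$-orbits of $X(F)$ vanishing by the precise form of \eqref{A:vanish}. Then invoke the Poisson summation formula for $X$ from \cite{Getz:Liu:BK} fiberwise in $g$ to substitute $\sum_{x \in X(F)}\mathcal{F}(f_1)(xg)$ for $\sum_{x \in X(F)} f_1(xg)$, and reverse the previous steps using the analogous hypothesis on $\mathcal{F}(f_1)$ to arrive at the right-hand side.

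The principal obstacles are twofold. First, one must control the non-open $G(F)$-orbits on $X$ so that the extension $\sum_{X^{\mathrm{open}}(F)} \mapsto \sum_{X(F)}$ is justified and the boundary contributions cancel symmetrically under $\mathcal{F}$; this requires a stratification of $X$ by $G$-orbits and a precise matching with the vanishing condition \eqref{A:vanish} on both sides. Second, $\Theta$ is only $N_0$-invariant rather than $G(F)$-invariant, so $\Theta(\delta g)$ depends nontrivially on $\delta \in N_0(F)\backslash G(F)$ and the unfolding does not cleanly pull $\Theta$ outside the inner $\delta$-sum --- the resolution is to use the Fourier-coefficient decomposition $\Theta(g) = \sum_{t\in F}\prod_{i=1}^3 \theta_{i,t}(g_i)$ into products of classical $\psi_t$-theta coefficients for each $(V_i, \mathcal{Q}_i)$ and track their transformation properties under the $\delta$-sum. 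Throughout, convergence of the interchanges is handled via Schwartz decay of $f_2$, gauge estimates on the Weil representation, and the Braverman--Kazhdan asymptotics of $f_1$ and $\mathcal{F}(f_1)$ near the boundary of $X$.
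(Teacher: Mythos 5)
Your skeleton — integrate the Poisson identity for $X$ against a theta series on $[G]$, unfold the sum over $X(F)$ to $G(F)$-orbits, isolate the open orbit, and identify the resulting integral with $\sum_{\xi\in Y^{\mathrm{sm}}(F)} I(f_1,f_2)(\xi)$ — is the same as the paper's. But you run the argument backward (from $\sum_{Y^{\mathrm{sm}}(F)}$ to the $[G]$-integral), and the mechanism you propose for the one genuinely delicate step is not the right one.

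The issue is your ``second obstacle'': the function $\Theta(g)=\sum_{\gamma\in Y^{\mathrm{sm}}(F)}\rho(g)f_2(\gamma)$ is $N_0(\A_F)$-invariant but not left $G(F)$-invariant, so it cannot be pulled out of the inner $\delta$-sum when you fold the $N_0(\A_F)\backslash G(\A_F)$-integral back to $[G]$. You propose to resolve this by decomposing $\Theta$ into products of classical $\psi_t$-Fourier coefficients and tracking their transformation under $\delta$. This would not lead anywhere pleasant: Fourier coefficients of theta functions on $\SL_2$ have no useful transformation law under all of $\SL_2(F)$, and the bookkeeping would be at least as hard as the original problem. The clean resolution, which the paper uses, is to work instead with the \emph{full} theta function $\Theta_{f_2}(g)=\sum_{\gamma\in V(F)}\rho(g)f_2(\gamma)$, which \emph{is} left $G(F)$-invariant by the standard automorphy of theta series, and to recover your restricted $\Theta$ as its period over $[N_0]$: the Weil-representation formula $\rho(n(u))f_2(v)=\psi(\sum_i u_i\mathcal{Q}_i(v_i))f_2(v)$ together with $\sum_i u_i=0$ shows $\int_{[N_0]}\Theta_{f_2}(n_0 g)\,dn_0 = \sum_{\gamma\in Y(F)}\rho(g)f_2(\gamma)$, and hypothesis \eqref{vanish} then restricts the sum to $Y^{\mathrm{sm}}(F)$. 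Once the unfolding is set up with $\Theta_{f_2}$ rather than $\Theta$, the computation is a completely standard orbit-by-orbit unfolding of $\int_{[G]}\sum_{\gamma\in X(F)}f_1(\gamma g)\Theta_{f_2}(g)\,dg$, and $\eqref{A:vanish}$ kills every orbit but the open one.

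There is a second gap you should be aware of: you invoke ``the Poisson summation formula for $X$'' as though it were an unconditional identity, but the formula of Theorem \ref{thm:BK} carries explicit boundary terms coming from residues of the degenerate Siegel Eisenstein series at $s=1,2$ for quadratic or trivial $\chi$. To drop them and use Corollary \ref{cor:BK} one needs $f_1$ and $\mathcal{F}(f_1)$ to satisfy \eqref{A:BK}. This is \emph{not} immediate from \eqref{A:vanish}; it is the content of Theorem \ref{thm:A:BK} (a Kudla--Rallis-type vanishing statement for residues of Siegel Eisenstein series attached to sections coming from $C_c^\infty$ at a place), which the paper proves in \S\ref{sec:appendix}. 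Your sketch does not address this and simply asserts the boxed Poisson identity; the argument would be incomplete without this input.
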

\noindent We also have the following corollary, proved below in Corollary \ref{cor:main}:
\begin{cor} 
\label{cor:intro}  Let $h \in H(\A_F)$.  For 
$$
(f_1,f_2) \in \mathcal{S}_{BK}(X(\A_F),K) \times \mathcal{S}(V(\A_F))
$$ 
such that $f_1$, $\mathcal{F}(f_1)$ satisfy \eqref{A:BK}, \eqref{A:vanish} and  $f_2$  satisfies \eqref{vanish}, one has
\begin{align*}
\sum_{\xi \in Y^{\mathrm{sm}}(F)} I(f_1,f_2)(h^{-1}\xi)=\sum_{\xi \in Y^{\mathrm{sm}}(F)} |\lambda(h)|^{\sum_{i=1}^3d_i/2-2} I(\mathcal{F}(f_1),f_2)(\lambda(h) h^{-1}\xi)\,.
\end{align*} 
\end{cor}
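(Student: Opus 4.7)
The plan is to deduce Corollary \ref{cor:intro} from Theorem \ref{thm:intro} by exploiting the $H(\A_F)$-equivariance of the integral $I(f_1,f_2)$, which rests on the standard compatibility between the Weil representation of the dual pairs $(\SL_2, O(V_i))$ and their similitude extensions. For $h \in H(\A_F)$ with similitude $\lambda := \lambda(h)$, I would first establish a transformation formula of the shape
\begin{align*}
I(f_1, f_2)(h^{-1} y) = |\lambda|^{a}\, I\bigl(R_{m(\lambda)} f_1,\; \rho(h) f_2\bigr)(y),
\end{align*}
valid for all $y \in Y^{\mathrm{sm}}(\A_F)$, where $R_{m(\lambda)}$ is right translation on $X(\A_F)$ by an element $m(\lambda)$ in the Siegel Levi of $\mathrm{Sp}_6$ built from the three diagonal tori of $\SL_2^3$, $(\rho(h) f_2)(v) := f_2(h^{-1}v)$, and $a$ is an explicit exponent of $|\lambda|$. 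The identity follows from the Schr\"odinger-model formulas for $\rho$ applied to $(\rho(g)f_2)(h^{-1}v)$: since $\mathcal{Q}_i(h_i^{-1}v) = \lambda^{-1}\mathcal{Q}_i(v)$, the pullback by $h^{-1}$ can be absorbed by conjugating $g$ by $m(\lambda)$, and the conjugation is then transferred to a right translation on the $f_1$-side via a change of variable on $N_{0}(\A_F) \backslash G(\A_F)$ (well-defined and measure-compatible because $m(\lambda)$ normalizes $N_0$ and $G$ is unimodular).

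With this transformation in hand, I would apply Theorem \ref{thm:intro} to the pair $(R_{m(\lambda)} f_1,\, \rho(h) f_2)$, having first checked routinely that the vanishing hypotheses \eqref{A:vanish} and \eqref{vanish} are preserved under right translation on $X(\A_F)$ and under the $H$-twist on $\mathcal{S}(V(\A_F))$. The $\mathrm{Sp}_6(\A_F)$-equivariance of the Braverman--Kazhdan Fourier transform, $\mathcal{F}(R_{m(\lambda)} f_1) = R_{m(\lambda)} \mathcal{F}(f_1)$, then allows one to move $R_{m(\lambda)}$ past $\mathcal{F}$. Running the transformation formula backwards with $\mathcal{F}(f_1)$ in place of $f_1$, and observing that $\lambda h^{-1}$ lies again in $H(\A_F)$ with similitude $\lambda$ (this follows directly from the defining equation \eqref{H}), identifies the resulting sum with the right-hand side of the corollary, namely $\sum_\xi |\lambda|^{\sum d_i/2 - 2} I(\mathcal{F}(f_1), f_2)(\lambda h^{-1}\xi)$.

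The main obstacle is producing the exponent exactly as $|\lambda|^{\sum d_i/2 - 2}$. I expect the $+\sum d_i/2$ contribution to come from the combined Weil-representation Jacobians, with each factor $\rho_i$ contributing $|\lambda|^{d_i/2}$ under the similitude action on $\mathcal{S}(V_i(\A_F))$, while the $-2$ arises from the modular character of the Siegel parabolic $P \leq \mathrm{Sp}_6$, which controls how right translation by $m(\lambda)$ rescales functions on the quotient $X = [P,P]\backslash\mathrm{Sp}_6$. A subsidiary subtlety is that the most convenient element $m(\lambda)$ naturally lives in $\GSp_6$ rather than $\mathrm{Sp}_6$ (it requires a square root of $\lambda$ to remain in $\SL_2^3$), so one must either take $m(\lambda)$ in the $\GSp_6$-extension and invoke the extended Weil representation of the similitude dual pair $(\GL_2 \times \mathrm{GO}(V_i))^\circ$, or split the Siegel-Levi element as an $\mathrm{Sp}_6$-factor times a central scalar; in either case the resulting $\GSp_6/\mathrm{Sp}_6$-shift is precisely what supplies the missing absolute-value twist by $|\lambda|$ and must be matched carefully against the similitude-action Jacobians.
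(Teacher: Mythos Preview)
Your overall strategy—rewrite $I(f_1,f_2)(h^{-1}\xi)$ as $|\lambda|^{a}I(\text{twisted }f_1,\text{twisted }f_2)(\xi)$, apply Theorem~\ref{thm:intro} to the twisted pair, then undo the twist on the $\mathcal{F}(f_1)$ side—is exactly the paper's route (Corollary~\ref{cor:main} via Lemma~\ref{lem:equiv}). The derivation of the first transformation formula through Lemma~\ref{lem:HK} and a change of variable on $N_0\backslash G$ is also correct and matches the paper.

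The gap is the step ``$\mathcal{F}(R_{m(\lambda)} f_1) = R_{m(\lambda)} \mathcal{F}(f_1)$ by $\mathrm{Sp}_6$-equivariance.'' As you note yourself, the natural $m(\lambda)$ is $\Lambda(h)=\left(\begin{smallmatrix}I_3 & \\ & \lambda I_3\end{smallmatrix}\right)\in\mathrm{GSp}_6\setminus\mathrm{Sp}_6$, so right $\mathrm{Sp}_6$-equivariance of $\mathcal{F}$ does not apply. The paper does \emph{not} resolve this by passing to $\mathrm{GSp}_6$ or by extracting a square root. Instead it defines $\widetilde{f}_1(g)=f_1(\gamma_0\Lambda(h)^{-1}\gamma_0^{-1}\,g\,\Lambda(h))$, combining a left translation by an element of the Siegel Levi with the right translation by $\Lambda(h)$; the product lands in $\mathrm{Sp}_6$ and $\widetilde{f}_1$ is a genuine element of $\mathcal{S}_{BK}(X(F),\Lambda(h)K\Lambda(h)^{-1})$. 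Crucially, $\mathcal{F}$ does \emph{not} commute with $f_1\mapsto\widetilde{f}_1$: one must go back to the definition of $\mathcal{F}$ via the intertwining operator $M_{w_0}$ and compute directly (this is formula~\eqref{FTtilde}), obtaining
\[
\mathcal{F}(\widetilde{f}_1)(g)=|\lambda|^{2}\,\mathcal{F}(f_1)\!\left(\mathrm{diag}(1,\lambda^{-1},\lambda^{-1},\lambda^{-1},1,1)\,g\,\Lambda(h)\right),
\]
where the left-translation matrix is \emph{different} from $\gamma_0\Lambda(h)^{-1}\gamma_0^{-1}$. That asymmetry, together with a second application of Lemma~\ref{lem:HK}, is precisely what produces the shift from $h^{-1}\xi$ to $\lambda h^{-1}\xi$ and the exponent $\sum d_i/2-2$ on the right-hand side. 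Your proposal correctly flags this as the main obstacle but does not carry out the computation; the paper's Lemma~\ref{lem:equiv} is where the real work lies. (Incidentally, the $-2$ is not the modular character of $P$: it is the Jacobian of conjugation by $\Lambda(h)$ on $N_0\backslash G$, and a matching $|\lambda|^{2}$ from the intertwining-operator computation cancels against a second such Jacobian on the $\mathcal{F}$ side.)
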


We now outline the proof of Theorem \ref{thm:intro}.
In \cite{Getz:Liu:BK} following an argument of Braverman and Kazhdan we proved a summation formula of the form
\begin{align} \label{basic:id}
\sum_{\gamma \in X(F)} f_1(\gamma g) =\sum_{\gamma \in X(F)} \mathcal{F}(f_1)(\gamma g)+\textrm{boundary terms}\,,
\end{align}
where $f_1 \in \mathcal{S}_{BK}(X(\A_F),K)$.  Given $f_2 \in \mathcal{S}(V(\A_F))$ one can form a product of three theta functions 
$$
\Theta_{f_2}(g):=\sum_{\gamma \in V(F)}\rho(g)f_2(\gamma)\,,
$$
in the usual manner (see \S \ref{ssec:Theta}).  We view $\Theta_{f_2}$ as an automorphic form on $G(\A_F)=\mathrm{SL}_2^3(\A_F)$.  One takes this automorphic form, integrates it against the identity \eqref{basic:id}, and then unfolds.  The resulting sum is indexed by the finite set $X(F)/G(F)$.  The summand corresponding to the Zariski-open orbit involves an integral over $N_0(F) \backslash N_0(\A_F)$, where $N_0$ is defined as in \eqref{N0}.  This integral eliminates the contribution of all $\gamma \in V(F)$ that are not in $Y(F)$.  Using this one obtains Theorem \ref{thm:intro}.  Since \eqref{basic:id} is essentially equivalent to the functional equations of certain degenerate Siegel Eisenstein series, another way of viewing this proof is that we are substituting $\Theta_{f_2}$ into Garrett's integral representation of the triple product $L$-function \cite{GarrettTripleAnnals,PSRallisTriple}.  {We note that we do not need the full strength of the summation formula proven in \cite{Getz:Liu:BK}.  The version we use is given in Corollary \ref{cor:BK}.  It is slightly more general than that of \cite{BK:normalized}.  However, we still must make use of the growth estimates on elements of the Schwartz space obtained in \cite{Getz:Liu:BK}.  These bounds are not proven in \cite{BK:normalized}.}

This procedure for producing new summation formulae from old is novel and deserves to be studied carefully with a view to generalizations.  The formal argument is short (see \S \ref{sec:summation}).  However, it takes substantial space to make it rigorous by proving various bounds and computing various integrals for unramified data. 

We close the introduction by outlining the sections of the paper.  In \S \ref{sec:groups:orbits} we introduce the groups and homogeneous spaces relevant for the unfolding procedure mentioned above.  We also record  representatives for $X(F)/G(F)$ and the stabilizers of these elements. In \S \ref{ssec:Plucker} we use the Pl\"ucker embedding of $X$ to give a notion of the size for an element of $X(F_v)$ for places $v$ of $F$.  In \S \ref{ssec:Weil} we recall and set notation for the Weil representation.  
 
We define local integrals 
 attached to the open orbit in $X(F)/G(F)$ in \S \ref{sec:Schwartz}.  The full version of Theorem \ref{thm:intro} is stated as Theorem \ref{thm:main}.  In \S \ref{sec:summation} we prove this theorem
modulo proving the absolute convergence of several sums.  The remainder of the paper {(with the exception of \S \ref{sec:appendix})} is devoted to proving these absolute convergence statements.  In each case, the absolute convergence statements amount to bounding local integrals and then bounding their sum over $F$-points of certain schemes.  The local integrals are computed in the unramified case in \S \ref{sec:unr}.  In \S \ref{sec:bound:na} we bound the non-Archimedean local integrals when the data are ramified.  The Archimedean case is treated in \S \ref{sec:bound:arch}.  In each case the arguments are straightforward.  The key point is to use  the bounds on functions in $\mathcal{S}_{BK}(X(\A_F),K)$ established by the authors in \cite{Getz:Liu:BK}; these bounds are given in terms of the Pl\"ucker embedding of $X(F)$.  In \S \ref{sec:AC} we use the bounds established in \S \ref{sec:unr}, \S \ref{sec:bound:na} and \S \ref{sec:bound:arch} to prove the absolute convergence statements used in \S \ref{sec:summation}.  
{
In \S \ref{sec:appendix} we prove a vanishing result necessary for the proof of our main theorem.
}

\section*{Acknowledgments}
The authors would like to thank S.~Kudla, W-W.~Li, B.C.~Ng\^o, Y.~Sakellaridis, F.~Shahidi,
and Z.~Yun for useful conversations and comments.   {The anonymous referees also deserve thanks for their careful reading of the paper, for pointing out several typos, and for suggesting we add a list of symbols.}
The authors also would like to thank H.~Hahn for help with editing and her constant encouragement.

\section{Groups and orbits}
\label{sec:groups:orbits} 
 
For this section we let $F$ be a field of characteristic zero. 
For each $i$, let
\begin{align*}
\langle \,,\,\rangle:V_i(F) \times V_i(F) &\lto F\\
(x,y) &\longmapsto {}^txy
\end{align*}
be the ``standard'' inner product and let
\begin{align*}
\langle \,,\,\rangle_i:V_i(F) \times V_i(F) \lto F
\end{align*}
be the (nondegenerate) inner product corresponding to $\mathcal{Q}_i$:
\begin{equation}\label{Qi}
    \mathcal{Q}_i(x)=\tfrac{1}{2}\langle x,x \rangle_{i}\,.
\end{equation}
Let $ J_{i} \in \GL_{d_i}(F)$ be the matrix of $\langle\,,\, \rangle_{i}$:
\begin{align} \label{Ji}
\langle x,y \rangle_{i}:={}^txJ_{i}y\,.
\end{align}

Recall that for 
\begin{equation}\label{V}
  V=\prod_{i=1}^3V_i  
\end{equation}
 and $R$ an $F$-algebra we have defined
\begin{align} \label{Y}
Y(R):&=\{ (y_1,y_2,y_3) \in V(R):\mathcal{Q}_1(y_1)=\mathcal{Q}_2(y_2)=\mathcal{Q}_3(y_3)\}\,.
\end{align}
We let 
\begin{align} \label{Vprime}
    V' \subset V
\end{align}
be the open subscheme of tuples $(v_1,v_2,v_3)$ such that $v_i \neq 0$ for at least $2$ indices $i$, and, as in the introduction, set
\begin{align}\label{Ysm}
Y^{\mathrm{sm}}:=Y \cap V'\,.
\end{align}

\subsection{A symplectic similitude group} \label{ssec:simil}

Equip the module $\ZZ^{\oplus 6}$ with the alternating form 
\begin{align} \label{alt:form}
(x,y) \mapsto \sum_{i=1}^3 \left(x_iy_{i+3}-y_ix_{i+3}\right).
\end{align}
Let $\mathrm{Sp}_6$ denote the symplectic group of this form.  Concretely, for $\ZZ$-algebras $R$, we have
$$
\mathrm{Sp}_6(R):=\left\{ g \in \GL_6(R): g\left(\begin{smallmatrix} & -I_3\\I_3 &  \end{smallmatrix} \right){}^{t}g\left(\begin{smallmatrix} & I_3\\-I_3 &  \end{smallmatrix} \right)= 1
\right\}.
$$
We usually regard $\mathrm{Sp}_6$ as a group over $F$ (by base change). 

Recall that $G= \mathrm{SL}_2^3$.
We often 
identify $G(R)$ with the subgroup
$\SL_2(R^3)\leq \mathrm{Sp}_6(R)$:
\begin{align} \label{Gembed}
G(R)=\left\{\left(\begin{smallmatrix} a_1 & & &b_1& & \\ & a_2 & && b_2 & \\ & & a_3 & & & b_3\\ c_1 & & & d_1 & & \\ & c_2 & & & d_2 & \\ & & c_3 & & & d_3\end{smallmatrix}\right) \in \GL_6(R) :a_id_i-b_ic_i=1 \textrm{ for }1 \leq i \leq 3 \right\}.
\end{align}

Let $P$ be the (Siegel) parabolic subgroup of $\mathrm{Sp}_6$ whose points in an $F$-algebra $R$ are given by 
\begin{align}
\label{Siegel} P(R)=\left\{\begin{pmatrix} A & \\
&  {}^tA^{-1}\end{pmatrix}\begin{pmatrix} I_3 & Z \\ & I_3 \end{pmatrix}:A \in \GL_3(R), \quad  {}^t Z=Z \right\},
\end{align}
and let $[P,P]$ denote its commutator subgroup: 
$$
[P,P](R):=\left\{\begin{pmatrix} A & \\
&  {}^tA^{-1}\end{pmatrix}\begin{pmatrix} I_3 & Z \\ & I_3 \end{pmatrix}:A \in \SL_3(R), \quad  {}^tZ=Z \right\}.
$$
 We let $M\leq P$ be the Levi subgroup consisting of block diagonal matrices and let $N$ be the unipotent radical of $P$.

\subsection{Braverman and Kazhdan's spaces}
\label{ssec:BK}
Let
\begin{align}\label{BKspaceX}
X:=[P,P] \backslash \Sp_6\,.
\end{align}
 We note that $X$ is an $M^{\mathrm{ab}} \times \mathrm{Sp}_6$ variety (with $M^{\mathrm{ab}}:=[M,M] \backslash M$ acting on the left and $\mathrm{Sp}_6$ on the right).  
Note that this is different from the convention in \cite{BK:normalized}.  In loc.~cit.~$M^{\mathrm{ab}}$ acts on the right.  We have chosen to let it act on the left because this is the convention in the theory of Eisenstein series. 
By \cite[Lemma 2.1]{Getz:Liu:BK} the natural maps
\begin{align*}
[P,P](F) \backslash \mathrm{Sp}_6(F) \lto X(F) \quad \textrm{ and } \quad 
P(F) \backslash \mathrm{Sp}_6(F) \to P \backslash \mathrm{Sp}_6(F)
\end{align*}
are bijective.

We now compute a set of representatives for
\begin{align*}
X(F)/G(F)
\end{align*}
and the corresponding stabilizers.  We start by recalling that $P\backslash \mathrm{Sp}_6(F)$ can be viewed as the space of maximal isotropic subspaces of $F^6$ equipped with the alternating form \eqref{alt:form}.  Each such space is $3$ dimensional, so we can represent such a space by a triple of vectors in $F^6$. 
Let 
\begin{align} \label{W}
W=\langle (0,0,0,1,0,0),(0,0,0,0,1,0),(0,0,0,0,0,1)\rangle\,.
\end{align}
Then $P$ is the stabilizer of $W$.  

We consider the following maximal isotropic subspaces:
\begin{align*}
W_{0,0,0}:&=\langle (1,1,1,0,0,0),(0,0,0,-1,1,0),(0,0,0,-1,0,1) \rangle\,,\\
W_{1,0,0}:&=\langle (0,0,0,1,0,0), (0,0,0,0,1,1),(0,1,-1,0,0,0) \rangle\,,\\
W_{0,1,0}:&=\langle (0,0,0,0,1,0),(0,0,0,1,0,1),(1,0,-1,0,0,0)\rangle\,,\\
W_{0,0,1}:&=\langle (0,0,0,0,0,1),(0,0,0,1,1,0),(1,-1,0,0,0,0)\rangle\,,\\
W_{1,1,1}:=W:&=\langle (0,0,0,1,0,0),(0,0,0,0,1,0),(0,0,0,0,0,1)\rangle\,.
\end{align*}
Let
\begin{align*}
\gamma_{0,0,0}:&=\left(\begin{smallmatrix} 0 & 0 & 0 & -1 & 0& 0\\ 0 & 1 & 0 & 0 & 0 &0\\
0 & 0 & 1 & 0& 0 & 0\\
1 & 1 & 1 & 0 & 0 & 0\\ 0 & 0 & 0 &-1 & 1 &  0\\
0 & 0 & 0 & -1 & 0 & 1  \end{smallmatrix}\right),\\
(\gamma_{1,0,0},\gamma_{0,1,0},\gamma_{0,0,1}):&=\left(\left(\begin{smallmatrix} 1 & 0 & 0 & 0 & 0 & 0\\
0 & 1 & 0 & 0 & 0 &0\\
0 & 0 & 0 & 0 & 0 & 1\\
0 & 0 &0 &1 & 0 & 0
\\
0& 0 & 0 & 0 & 1 & 1\\
0 & 1 & -1 & 0 & 0 & 0\end{smallmatrix}\right),
\left(\begin{smallmatrix} 0 & 1 &0 & 0 & 0& 0\\
1 & 0 & 0 & 0 & 0 &0\\
0 & 0 & 0 & 0 & 0 & 1\\
0 & 0 &0 &0 &1 & 0\\ 
0 & 0 & 0 & 1 &0 & 1\\
1 &0 & -1 &0 & 0 & 0\end{smallmatrix}\right),\left(\begin{smallmatrix} 0 & 0 &1 & 0 & 0& 0\\
1 & 0 & 0 & 0 & 0 &0\\
0 & 0 & 0 & 0 & 1 & 0\\
0 & 0 &0 &0 &0 & 1\\ 
0 & 0 & 0 & 1 &1 & 0\\
1 &-1 & 0 &0 & 0 & 0\end{smallmatrix}\right)\right). 
\end{align*}
All four matrices are in $\mathrm{Sp}_6(\ZZ)$ and $W_{a}=W\gamma_{a}$.  We denote by $I_{a_1,a_2,a_3}$
the stabilizer in $G$ of  $W_{a_1,a_2,a_3}$.  

For $F$-algebras $R$, let
\begin{align}\label{T0N0}
\begin{split}
T_0(R):&=\left\{\left(\left(\begin{smallmatrix} a & \\ & a^{-1} \end{smallmatrix} \right),\left(\begin{smallmatrix} a & \\ & a^{-1} \end{smallmatrix} \right),\left(\begin{smallmatrix} a & \\ & a^{-1} \end{smallmatrix} \right) \right): a\in R^\times \right\},\\
N_0(R):&=\left\{\left(\left(\begin{smallmatrix} 1 & t_1 \\ & 1 \end{smallmatrix} \right),\left(\begin{smallmatrix} 1 & t_2\\ & 1 \end{smallmatrix} \right),\left(\begin{smallmatrix} 1 & t_3\\ & 1 \end{smallmatrix} \right) \right): t_i \in R, \sum_{i=1}^3t_i=0\right\}.
\end{split}
\end{align}
These are subgroups of $G$, and $T_0$ normalizes $N_0$.

\begin{lem} \label{lem:PSR:orbit}
The set $P\backslash \mathrm{Sp}_6(F)/G(F)$ has $5$ elements.  Representatives for these elements are given by the spaces $W_{a_1,a_2,a_3}$.  The stabilizers of these spaces are given as follows:  
\begin{enumerate}
\item $I_{0,0,0}=T_0N_0$.
\item $I_{1,0,0}(R)=\left\{\Big(\left(\begin{smallmatrix} a & t \\ & a^{-1} \end{smallmatrix} \right),g,\left(\begin{smallmatrix}1 & \\ & -1 \end{smallmatrix} \right)g\left(\begin{smallmatrix}1 & \\ & -1 \end{smallmatrix} \right)\Big):g \in \SL_2(R), a \in R^\times, t \in R \right\}$, 
\item $I_{0,1,0}(R):=\left\{\Big(g,\left(\begin{smallmatrix} a & t \\ & a^{-1} \end{smallmatrix} \right),\left(\begin{smallmatrix}1 & \\ & -1 \end{smallmatrix} \right)g\left(\begin{smallmatrix}1 & \\ & -1 \end{smallmatrix} \right)\Big):g \in \SL_2(R), a \in R^\times, t \in R\right\}$,
\item $I_{0,0,1}(R)=\left\{\Big(g,\left(\begin{smallmatrix}1 & \\ & -1 \end{smallmatrix} \right)g\left(\begin{smallmatrix}1 & \\ & -1 \end{smallmatrix} \right),\left(\begin{smallmatrix} a & t \\ & a^{-1} \end{smallmatrix} \right)\Big):g \in \SL_2(R), a \in R^\times, t \in R\right\}$,
\item $I_{1,1,1}=G \cap P$, the upper triangular matrices in $G$.
\end{enumerate}
\end{lem}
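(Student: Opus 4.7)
I would identify $P(F) \backslash \mathrm{Sp}_6(F)$ with the set $\mathrm{LG}(F^6)$ of Lagrangian subspaces of $F^6$ for the alternating form \eqref{alt:form}, via $P\gamma \leftrightarrow W\gamma$ as recalled just before the statement. Under this identification the $W_{a_1,a_2,a_3}$ become five explicit Lagrangians, and the problem reduces to classifying $G(F)$-orbits on $\mathrm{LG}(F^6)$. The key invariant is
\[
(a_1(L),\, a_2(L),\, a_3(L)) := (\dim_F(L \cap H_1),\, \dim_F(L \cap H_2),\, \dim_F(L \cap H_3)),
\]
where $H_i := Fe_i \oplus Fe_{i+3}$; each entry lies in $\{0,1\}$ because $H_i$ is two-dimensional symplectic, and the triple is $G(F)$-invariant since $G$ preserves each $H_i$.

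The first step is to show that only the five values listed actually occur. I would rule out the three cases with exactly two nonzero entries (say $(1,1,0)$) as follows: writing $L = \ell_1 \oplus \ell_2 \oplus Fv$ with $\ell_i = L \cap H_i$, the isotropy of $L$ gives $\omega(\ell_i, v)=0$, and since $\ell_i^\perp \cap H_i = \ell_i$ the $H_i$-component of $v$ lies in $\ell_i$ for $i=1,2$; absorbing these into $\ell_1 \oplus \ell_2$ one may take $v \in H_3$, contradicting $a_3 = 0$. A direct substitution then verifies that each $W_{a_1,a_2,a_3}$ realizes the claimed invariant.

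The second step is $G(F)$-transitivity on each stratum. For $(1,1,1)$ one has $L = \ell_1 \oplus \ell_2 \oplus \ell_3$, and $\SL_2(F)$ is transitive on lines in each $H_i$. For $(1,0,0)$ and its cyclic analogues, $L = \ell_1 \oplus L'$ with $L' := L \cap (H_2 \oplus H_3)$ a Lagrangian in $H_2 \oplus H_3$ transverse to both summands, hence the graph of an isomorphism $\phi \colon H_2 \to H_3$ satisfying $\phi^*\omega_3 = -\omega_2$; given two such $\phi, \phi'$, the element $\phi'\phi^{-1}$ lies in $\mathrm{Sp}(H_3)(F) = \SL_2(F)$, witnessing transitivity. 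For the open stratum $(0,0,0)$ I would argue by an explicit step-by-step normalization. The three subspaces $K_i := L \cap (H_j \oplus H_k)$ are forced to be one-dimensional, and the isotropy pairing of $K_j$ against $K_k$ singles out a common line $\ell_i \subset H_i$ for each $i$ (the shared projection); first use $\SL_2(F)^3$ to move $(\ell_1,\ell_2,\ell_3)$ to $(Fe_4, Fe_5, Fe_6)$, then use the diagonal torus of the residual stabilizer to normalize the $K_i$'s to match those of $W_{0,0,0}$ (two normalizations are free and the third is forced to agree by the defining equation of the $(0,0,0)$-stratum), and finally use the unipotent radicals of the residual stabilizer together with an overall rescaling of the extra vector in $L/(K_1+K_2+K_3)$ to reduce it to $e_1 + e_2 + e_3$. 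At this point the Lagrangian condition on $L$ pins down the remaining parameters and forces $L = W_{0,0,0}$.

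Finally the stabilizers are computed by a direct matrix calculation. Writing $(g_1,g_2,g_3) \in G$ as a block-diagonal element of $\mathrm{Sp}_6$ as in \eqref{Gembed} with $g_i = \bigl(\begin{smallmatrix} a_i & b_i \\ c_i & d_i\end{smallmatrix}\bigr)$, the condition that right multiplication by $g$ preserves $W_{a_1,a_2,a_3}$ is a linear system on the entries of the $g_i$. For $I_{0,0,0}$, applying this to the three spanning vectors of $W_{0,0,0}$ forces $a_1=a_2=a_3$, $c_1=c_2=c_3=0$, $d_1=d_2=d_3$, and $b_1+b_2+b_3=0$, which combined with $a_i d_i = 1$ is precisely $T_0 N_0$ under the change of parameters $t_i = b_i/a$. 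The other four stabilizers are computed in the same style, and $I_{1,1,1} = G \cap P$ is immediate since $W_{1,1,1} = W$. The main obstacle is the transitivity argument in the open stratum $(0,0,0)$: the normalization must be carried out so that each successive subgroup of $G$ has exactly the right dimension to kill one further invariant, with the Lagrangian condition on $L$ supplying the consistency needed at each stage.
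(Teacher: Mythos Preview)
Your proposal is correct and gives a self-contained direct argument. The paper proceeds quite differently: it simply invokes \cite[Lemma 1.1]{PSRallisTriple}, which establishes the analogous decomposition $\widetilde{P}\backslash \mathrm{GSp}_6(F)/\widetilde{G}(F)$ in the similitude setting (with $\widetilde{G}$ the group of triples in $\GL_2^3$ of equal determinant), and then checks that the natural map from $P\backslash \mathrm{Sp}_6(F)/G(F)$ to this double coset space is a bijection; the stabilizers are likewise read off from the corollary to that lemma and verified against the explicit lifts $\gamma_a$. Your approach, by contrast, amounts to reproving the Piatetski--Shapiro--Rallis lemma from scratch via the intersection-dimension invariant $(\dim L\cap H_1,\dim L\cap H_2,\dim L\cap H_3)$. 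This makes the argument independent of the external reference and explains structurally why exactly five orbits occur, at the cost of the extended step-by-step normalization in the open stratum. The paper's route is much shorter but entirely dependent on the cited source for its substance.
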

\begin{proof}
By \cite[Lemma 1.1]{PSRallisTriple}, if $\widetilde{P}$ denotes the parabolic subgroup of $\mathrm{GSp}_6$ containing $P$ then the given spaces $W_{a_1,a_2,a_3}$ are representatives for $\widetilde{P}\backslash \mathrm{GSp}_6(F)/\widetilde{G}(F)$, where $\widetilde{G}(F)$ is the group of $(g_1,g_2,g_3) \in \GL_2^{3}(F)$ such that $\det g_1=\det g_2=\det g_3$.  In the notation of loc.~cit., $W_{a_1,a_2,a_3} \in X_{a_1,a_2,a_3}$.  
On the other hand one checks that the natural map
$$P\backslash \Sp_6(F)/G(F) \to \widetilde{P}\backslash \mathrm{GSp}_6(F)/\widetilde{G}(F)$$
is a bijection, so the first two assertions of the lemma follow.

The assertion on the stabilizers is implicit in the corollary of \cite[Lemma 1.1]{PSRallisTriple}.  Since we have given explicit lifts $\gamma_a$ of $W_a$ under the map $\mathrm{Sp}_6(F) \to X(F)$ it is easy to verify that it is correct.
\end{proof}

\begin{lem}\label{lem:bij}  The natural map
$$
[P,P](F) \backslash \Sp_6(F)/G(F) \lto P \backslash \Sp_6(F)/G(F)
$$
is bijective.  
\end{lem}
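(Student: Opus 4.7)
The plan is to observe that surjectivity of the given map is immediate (since $[P,P] \subset P$), and to reduce injectivity to a surjectivity statement about the images of the stabilizers $I_a$ of Lemma~\ref{lem:PSR:orbit} in $M^{\mathrm{ab}}$. Concretely, suppose $[P,P](F)g_1G(F)$ and $[P,P](F)g_2G(F)$ have the same image in $P(F)\backslash\Sp_6(F)/G(F)$. Using Lemma~\ref{lem:PSR:orbit} we may assume $g_2 = \gamma_a$ for one of the five representatives $\gamma_{a_1,a_2,a_3}$, and write $g_1 = p\,\gamma_a\,\gamma$ with $p \in P(F)$ and $\gamma \in G(F)$. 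The splitting $M^{\mathrm{ab}} \hookrightarrow M \hookrightarrow P$ given by $t \mapsto \mathrm{diag}(t,1,1,t^{-1},1,1)$, together with the identification $M^{\mathrm{ab}} = [M,M]\backslash M \cong \GG_m$ (via the determinant on $M \cong \GL_3$), makes the sequence $1 \to [P,P](F) \to P(F) \to M^{\mathrm{ab}}(F) \to 1$ split short exact. Thus the obstruction to writing $p = p' \cdot \gamma_a \tilde{\gamma} \gamma_a^{-1}$ with $p' \in [P,P](F)$ and $\tilde{\gamma} \in I_a(F)$ (which would give $g_1 \in [P,P](F)\gamma_a G(F)$) is exactly the class of $p$ in the cokernel of the composite
\begin{align*}
\phi_a\colon I_a(F) \xrightarrow{\mathrm{Ad}(\gamma_a)} P(F) \twoheadrightarrow M^{\mathrm{ab}}(F) = F^\times,
\end{align*}
so it suffices to show $\phi_a$ is surjective for each of the five $a$'s.

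I would verify this case by case using the explicit formulas for $I_a$ in Lemma~\ref{lem:PSR:orbit}: in each case the stabilizer contains an evident split rank-one torus, and a short $6 \times 6$ matrix computation exhibits a one-parameter family whose image under $\phi_a$ exhausts $F^\times$. For example, in the open-orbit case $a = (0,0,0)$ with $I_a = T_0N_0$, taking $\tau_t = \mathrm{diag}(t,t,t,t^{-1},t^{-1},t^{-1}) \in T_0(F)$ yields
\begin{align*}
\gamma_{0,0,0}\,\tau_t\,\gamma_{0,0,0}^{-1} = \mathrm{diag}(t^{-1},t,t,t,t^{-1},t^{-1}) \in M,
\end{align*}
whose upper-left $3 \times 3$ block has determinant $t$. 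For $a = (1,1,1)$ one has $\gamma_a = I$ and $I_{1,1,1} = G \cap P$, and the element $(\mathrm{diag}(t,t^{-1}),I_2,I_2)$, embedded via \eqref{Gembed} as $\mathrm{diag}(t,1,1,t^{-1},1,1)$, maps to $t$. For $a = (1,0,0)$ the element $(\mathrm{diag}(t,t^{-1}),I_2,I_2) \in I_{1,0,0}(F)$ again works: a direct calculation with $\gamma_{1,0,0}$ shows it commutes with $\gamma_{1,0,0}$, so its image is again $t$. The remaining two cases $a = (0,1,0), (0,0,1)$ are interchanged with $(1,0,0)$ by permutation of the three $\SL_2$-factors in $G$.

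The only real obstacle is the bookkeeping for the explicit $6 \times 6$ matrix multiplications $\gamma_a\,\tau_t\,\gamma_a^{-1}$; once these are tabulated the surjectivity of $\phi_a$ is immediate, and the conceptual reduction in the first paragraph is entirely formal.
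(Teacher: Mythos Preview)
Your proposal is correct and follows essentially the same strategy as the paper's proof. Both arguments reduce injectivity to showing, for each representative $\gamma_a$, that the conjugation map $I_a(F) \to P(F) \to M^{\mathrm{ab}}(F)\cong F^\times$ is surjective; you phrase this abstractly as the vanishing of a cokernel, while the paper writes the equality $P(F)\gamma_a G(F) = [P,P](F)\gamma_a G(F)$ directly, but the content is identical. The only cosmetic difference is in the choice of test elements: the paper uses the single diagonal element $\mathrm{diag}(x,x,x,x^{-1},x^{-1},x^{-1}) \in T_0(F)$ uniformly for $\gamma_0,\gamma_1,\gamma_2,\gamma_3$ (and the observation $[P,P](F)G(F)=P(F)G(F)$ for the identity coset), whereas you use $(\mathrm{diag}(t,t^{-1}),I_2,I_2)$ for the non-open cases. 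Both work, and your matrix checks (including the commutation of $\mathrm{diag}(t,1,1,t^{-1},1,1)$ with $\gamma_{1,0,0}$) are correct.
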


In the remainder of the paper it is sometimes convenient to adopt the following notation:
\begin{align} \label{gammas}
\gamma_0:=\gamma_{0,0,0}, \quad  \gamma_1:=\gamma_{1,0,0}, \quad \gamma_2:=\gamma_{0,1,0},\quad \gamma_3:=\gamma_{0,0,1}\,.
\end{align}

\begin{proof}
We clearly have 
$$
[P,P](F)G(F)=P(F)G(F)\,.
$$
Moreover, for any $x \in F^\times$, 
\begin{align*}
\gamma_{0}\left(\begin{smallmatrix} xI_3 & \\ & x^{-1}I_3\end{smallmatrix}\right)\gamma_0^{-1}=\left(\begin{smallmatrix} x^{-1} & & & & &\\
& x & & & &\\ & & x & & &\\ & & & x & & \\ & & & & x^{-1} & \\ & & & & & x^{-1}\end{smallmatrix}\right),
\end{align*}
and  $\det \left(\begin{smallmatrix} x^{-1}& & \\ & x & \\ && x \end{smallmatrix} \right)=x$.
Thus
\begin{align*}
P(F) \gamma_0G(F)&=\bigcup_{x \in F^\times} [P,P](F) \left(\begin{smallmatrix} x^{-1} & & & & &\\
& x & & & &\\ & & x & & &\\ & & & x & & \\ & & & & x^{-1} & \\ & & & & & x^{-1}\end{smallmatrix}\right)\gamma_0 G(F)\\&=\bigcup_{x \in F^\times}[P,P](F)\gamma_0\left(\begin{smallmatrix} xI_3 & \\ & x^{-1}I_3\end{smallmatrix}\right)G(F)\\&=[P,P](F)\gamma_0G(F)\,.
\end{align*}
One checks similarly that $P(F)\gamma_jG(F)=[P,P](F)\gamma_jG(F)$, for $1 \leq j \leq 3$; the relevant matrix computations are below:
\begin{align*}
\gamma_1\left( \begin{smallmatrix} xI_3 & \\ & x^{-1}I_3\end{smallmatrix} \right)\gamma_1^{-1}=
\gamma_2\left( \begin{smallmatrix} xI_3 & \\ & x^{-1}I_3\end{smallmatrix} \right)\gamma_2^{-1}=
\gamma_3\left( \begin{smallmatrix} xI_3 & \\ & x^{-1}I_3\end{smallmatrix} \right)\gamma_3^{-1}=\left(\begin{smallmatrix} x & & & & &\\
& x & & & &\\ & & x^{-1} & & &\\ & & & x^{-1} & & \\ & & & & x^{-1} & \\ & & & & & x\end{smallmatrix}\right).
\end{align*}
\end{proof}

For $\gamma \in X(F)$, let $G_{\gamma} \leq G$ be the stabilizer of $\gamma$.  A simple matrix computation implies the following lemma:
\begin{lem} \label{lem:stab} One has
\begin{align}\label{Ggammas}
\begin{split}
G_{\gamma_{0}}(R):&=N_0(R)\,,\\
G_{I_3}(R):&=\left\{\left(\left(\begin{smallmatrix} b_1^{-1} & t_1 \\ & b_1 \end{smallmatrix} \right),\left(\begin{smallmatrix}b_2^{-1} & t_2 \\ & b_2 \end{smallmatrix} \right),\left(\begin{smallmatrix} b_3^{-1} & t_3 \\ & b_3 \end{smallmatrix} \right) \right):t_1,t_2,t_3 \in R, b_1,b_2,b_3 \in R^\times, b_1b_2b_3=1\right\},\\
G_{\gamma_{1}}(R):&=\left\{\Big(\left(\begin{smallmatrix} 1 & t \\ & 1 \end{smallmatrix} \right),g,\left(\begin{smallmatrix}1 & \\ & -1 \end{smallmatrix} \right)g\left(\begin{smallmatrix}1 & \\ & -1 \end{smallmatrix} \right)\Big) :t \in R, g \in \SL_2(R)\right\},\\
G_{\gamma_{2}}(R):&=\left\{\Big(g,\left(\begin{smallmatrix} 1 & t \\ & 1 \end{smallmatrix} \right),\left(\begin{smallmatrix}1 & \\ & -1 \end{smallmatrix} \right)g\left(\begin{smallmatrix}1 & \\ & -1 \end{smallmatrix} \right)\Big) :t \in R, g \in \SL_2(R)\right\},
\\
G_{\gamma_{3}}(R):&=\left\{\Big(g,\left(\begin{smallmatrix}1 & \\ & -1 \end{smallmatrix} \right)g\left(\begin{smallmatrix}1 & \\ & -1 \end{smallmatrix} \right),\left(\begin{smallmatrix} 1 & t \\ & 1 \end{smallmatrix} \right)\Big) :t \in R, g \in \SL_2(R)\right\}.
\end{split}
\end{align}
\qed
\end{lem}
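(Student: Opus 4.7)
The plan is to deduce the lemma from Lemma \ref{lem:PSR:orbit} by a single character computation in each case. For each $\gamma \in X(F) = [P,P](F) \backslash \Sp_6(F)$, the stabilizer $G_\gamma$ sits inside the larger stabilizer of the image of $\gamma$ in $P(F) \backslash \Sp_6(F)$, which by Lemma \ref{lem:PSR:orbit} is one of the $I_a$ or $G \cap P$. For $a \in \{(0,0,0),(1,0,0),(0,1,0),(0,0,1)\}$, conjugation by $\gamma_a$ carries $I_a$ into $P$, and composition with the projection $P \to M/[M,M] \cong \GG_m$ (the determinant on the Levi $M \cong \GL_3$) gives a character $\chi_a \colon I_a \to \GG_m$ whose kernel is precisely $G_{\gamma_a}$; for $a = (1,1,1)$ the element $\gamma_{1,1,1} = I_6$ makes the conjugation trivial, and $\chi_{1,1,1}$ is just the restriction of the Levi determinant to $G \cap P$.

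Since each $\chi_a$ is a character, its kernel is the semidirect product of the unipotent radical of $I_a$ with the kernel of the restriction of $\chi_a$ to a maximal torus. The restriction of $\chi_a$ to the central one-parameter subgroup $\{\mathrm{diag}(xI_3,x^{-1}I_3)\}$ has already been computed in the proof of Lemma \ref{lem:bij} and equals the identity character $x \mapsto x$ in every case. For $\gamma_0$ this alone suffices because $T_0$ is one-dimensional, yielding $G_{\gamma_0} = N_0$. For $\gamma_1, \gamma_2, \gamma_3$ the maximal torus of $I_a$ is two-dimensional, and the only additional work is to conjugate a generic diagonal torus element by $\gamma_a$ using the explicit columns of the $\gamma_a$ listed before Lemma \ref{lem:PSR:orbit} and read off the top-left $3 \times 3$ determinant; for $\gamma_1$ this computation gives $\chi_1(g) = a$ (the top-left entry of the first $\SL_2$-factor), so the kernel is the set stated in the lemma, and the cases $\gamma_2, \gamma_3$ follow by the symmetric permutation of the three $\SL_2$-factors. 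For $\gamma = I_3$, the claim reduces to observing that a triple $(g_1, g_2, g_3) \in G$ lies in $P$ iff each $g_i$ is upper triangular, hence of the form $\left(\begin{smallmatrix} b_i^{-1} & t_i \\ & b_i \end{smallmatrix}\right)$, and lies in $[P,P]$ iff additionally the Levi determinant $b_1^{-1} b_2^{-1} b_3^{-1}$ equals $1$, i.e.\ $b_1 b_2 b_3 = 1$.

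There is no conceptual obstacle: each case is a one-line conjugation calculation once the embedding \eqref{Gembed} is in hand, and the paper already hints at this by calling the computation ``simple.'' The only subtlety to watch is that the three $\SL_2$-factors are interleaved rather than block-diagonal in the $\Sp_6$-embedding, so one has to keep careful track of the row and column indices when computing $\gamma_a D \gamma_a^{-1}$ for a diagonal torus element $D$.
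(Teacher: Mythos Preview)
Your approach is correct and is in fact more structured than what the paper offers: the paper's proof is literally the sentence ``A simple matrix computation implies the following lemma'' followed by \qed, so any coherent reduction to Lemma~\ref{lem:PSR:orbit} is already more than the authors wrote. Your key observation---that $G_{\gamma_a}$ is the kernel of the character $\chi_a\colon I_a\to \GG_m$ obtained by conjugating into $P$ and reading off the Levi determinant---is exactly the right way to organize the computation, and your recycling of the conjugation identities from the proof of Lemma~\ref{lem:bij} is efficient.

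There is one genuine imprecision worth fixing. You write that ``its kernel is the semidirect product of the unipotent radical of $I_a$ with the kernel of the restriction of $\chi_a$ to a maximal torus.'' This is true for the solvable cases $a=(0,0,0)$ and $a=(1,1,1)$, but for $a\in\{(1,0,0),(0,1,0),(0,0,1)\}$ the group $I_a$ contains a copy of $\SL_2$, and your stated recipe would produce only the unipotent radical of the Borel factor together with a one-dimensional torus inside the $\SL_2$---missing the full $\SL_2$. The correct statement is that $\chi_a$, being a character to $\GG_m$, factors through the abelianization $I_a/[I_a,I_a]$; since $\SL_2$ is perfect and unipotent groups have no nontrivial characters, $\chi_a$ is automatically trivial on the $\SL_2$ factor and on all unipotents. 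Once you say this, your torus computation (which yields $\chi_1=(a,b)\mapsto a$ in the $\gamma_1$ case, for instance) immediately gives the claimed $G_{\gamma_a}$. Your later assertion ``$\chi_1(g)=a$'' for general $g$ is correct and suggests you had this in mind, but the semidirect-product sentence as written does not justify it.
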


\subsection{A Pl\"ucker embedding of $X$} \label{ssec:Plucker}

Let $P$ the  Siegel parabolic subgroup from above.
We can use the Pl\"ucker embedding to give a linear description of $X$.  
We construct a commutative diagram
\begin{align} \label{Pl:diag}
\begin{CD}
[P,P] \backslash \mathrm{Sp}_6 @>{\mathrm{Pl}}>> \wedge^3 \GG_a^6 -\{0\} \\ @VVV  @VVV\\
P \backslash \mathrm{Sp}_6 @>>> \mathbb{P}(\wedge^3 \GG_a^6)
\end{CD}
\end{align}
of morphisms of $F$-schemes as follows.  
The Lagrangian subspace fixed by $P$ is $W$.  For a ring $R$ and $g=\left(\begin{smallmatrix} A\\ B \end{smallmatrix}\right) \in \mathrm{Sp}_6(R)$ for $3\times 6$ matrices $A,B$   we define
\begin{align} \label{Pl}
\mathrm{Pl}(g)=b_1 \wedge b_2 \wedge b_3\,,
\end{align}
where $b_i$ is the $i$th row of $B$.  The bottom arrow just sends a point in $P \backslash \mathrm{Sp}_6$ to the line spanned by this vector.

Let $\mathrm{Sp}_{6}(F)$ act on $F^{6}$ on the right.  One obtains an induced action on $\wedge^3 F^{6}$.  For the remainder of this section assume that $F$ is a local field.  When $F$ is Archimedean let $K \leq \mathrm{Sp}_6(F)$ be a maximal compact subgroup, choose a positive definite bilinear form $(\cdot,\cdot)$ on $\wedge^3 F^{6}$ that is invariant under the action of $K$ and set $|x|=(x,x)^{[F:\RR]/2}$.  In the non-Archimedean case let $e_1,\dots,e_{6}$ be the standard basis of $F^{6}$ and let 
$$
\{e_{\alpha_1,\alpha_2,\alpha_3}:=e_{\alpha_1} \wedge e_{\alpha_2} \wedge e_{\alpha_3}:1 \leq \alpha_1<\alpha_2<\alpha_3\leq 6\}
$$
be the natural induced basis of $\wedge^3 F^{6}$.  Then
set
$$
\left|\sum_{1 \leq \alpha_{1}<\alpha_2<\alpha_3 \leq 6}x_{\alpha_1,\alpha_2,\alpha_3}e_{\alpha_1,\alpha_2,\alpha_3}\right|=\max_{1\leq \alpha_1<\alpha_2<\alpha_3 \leq 6}|x_{\alpha_1,\alpha_2,\alpha_3}|\,.
$$
This norm is invariant under the natural action of $\GL(\wedge^3\OO^6)$ on the left or right by an easy argument (see \cite[\S 2]{Getz:Liu:BK}).  Here and below $\OO$ denotes the ring of integers of a local non-Archimedean or global field $F$.
 We then set 
\begin{align} \label{norm:def}
 |g|:=|\mathrm{Pl}(g)|\,.
\end{align}  

For any $c \in \ZZ$, let
\begin{align} \label{c:def}
c(x):=\left( \begin{smallmatrix} x^{-c} & &  & & \\ & 1 & & & \\ & & 1 & & \\ & & & x^{c} & & \\ & & & & 1 & \\ & & & & & 1\end{smallmatrix} \right).
\end{align}
In this way we obtain an isomorphism $\ZZ \cong X_*(M/M^{\mathrm{der}})$; we often use this isomorphism to identify integers with cocharacters of $M/M^{\mathrm{der}}$.  
We have chosen our basis so that for non-Archimedean $F$ with uniformizer $\varpi$ one has
$|c(\varpi)| \to 0$ as $c \to \infty$.
The Iwasawa decomposition implies that 
\begin{align} \label{Iwasawa:1}
X(F)=\coprod_{c \in \ZZ} [P,P](F)c(\varpi)\mathrm{Sp}_6(\OO)
\end{align}
in the non-Archimedean case, and 
\begin{align} \label{Iwasawa:2}
X(F)=\bigcup_{t \in \RR_{>0}}[P,P](F)1(t)K
\end{align}
in the Archimedean case.  

By \cite[Proposition 2.3]{Getz:Liu:BK}, there is a continuous injection
\begin{align} \label{inj} \begin{split}
X(F)/K \lto \RR_{>0}\\
[P,P](F)gK \longmapsto |g|\,, \end{split}
\end{align} 
where $K=\mathrm{Sp}_6(\OO)$ in the non-Archimedean case.

\section{The Weil representation and theta functions} 

\subsection{The local definition of the Weil representation}\label{ssec:Weil}
In the introduction we started with a triple of quadratic spaces of even dimension over a number field $F$.  For this subsection we fix a place $v$ of $F$ which we omit from notation, writing $F:=F_v$, etc.

Let $\mathrm{O}_{\mathcal{Q}_i}$ be the orthogonal group of $\mathcal{Q}_i$.    Weil (following Segal and Shale) defined the Weil representation 
\begin{equation}\label{rholocal}
  \rho:=\rho_\psi:\mathrm{SL}_2(F) \times \mathrm{O}_{\mathcal{Q}_i}(F) \times \mathcal{S}(V_i(F)) \lto \mathcal{S}(V_i(F))\,.  
\end{equation}
Let $
\gamma(\mathcal{Q}_i)$
be the Weil number as in \cite[Th\'eor\`eme 2 and \S 24]{Weil:Certains:groupes}.
Then the representation is given on the $\mathrm{O}_{\mathcal{Q}_i}(F)$ factor by $f \mapsto (v \mapsto f(h^{-1}v))$ and on the $\mathrm{SL}_2(F)$ factor by 
\begin{enumerate}
\item $\rho\left(\begin{smallmatrix} & 1 \\ -1 & \end{smallmatrix} \right)f(v)=\gamma(\mathcal{Q}_i)\int_{V_i(F)}f(t)\psi({}^tvJ_it)dt$.
\smallskip
\item 
$\rho\left(\begin{smallmatrix} 1 & t \\ & 1 \end{smallmatrix} \right)f(v)=\psi(t\mathcal{Q}_i(v))f(v)$ for $t \in F$.\smallskip
\item $\rho\left(\begin{smallmatrix} a & \\ &a^{-1} \end{smallmatrix} \right)f(v)=(a,(-1)^{\frac{d_i}{2}}\det (J_i))|a|^{\dim_FV_i/2}f(av)$ for $a \in F^\times$.  
\end{enumerate}
\noindent  Here $dt$ is assumed to be the self-dual measure with respect to the pairing $(v,t) \mapsto \psi({}^tvJ_it)$.  A convenient reference is \cite[Chapter 2]{YYZ:book}.
The Hilbert symbol $(a,b)$ appearing in the definition above 
takes values in $\pm 1$ and is bimultiplicative.  Thus for each $i$ there are characters $\chi_{\mathcal{Q}_i}:F^\times \to \pm 1$ such that $$\chi_{\mathcal{Q}_i}(a):=(a,(-1)^{\frac{d_i}{2}}\det (J_i))\,.$$  
We write 
\begin{align} \label{chiQ}
\chi_{\mathcal{Q}}(a):=\prod_{i=1}^3\chi_{\mathcal{Q}_i}(a_i)
\end{align}
for $a \in (F^\times)^3$.  Applying the Bruhat decomposition on $\mathrm{SL}_2(F)$ we see that the information above is enough to uniquely define the representation. 

Let $\mathrm{GO}_{\mathcal{Q}_i}$ denote the similitude group of the form 
$\mathcal{Q}_i$.  Consider the semidirect product
$$
\SL_2 \rtimes \mathrm{GO}_{\mathcal{Q}_i}\,,
$$
where
$$
(g \rtimes h)(g' \rtimes h'):=g\left(\begin{smallmatrix} 1 & \\ &  \lambda(h)  \end{smallmatrix} \right)g'\left(\begin{smallmatrix}1 & \\  & \lambda(h)^{-1} \end{smallmatrix} \right) \rtimes hh'\,.
$$
For $h \in \mathrm{GO}_{\mathcal{Q}_i}(F)$ and $f \in \mathcal{S}(V_i(F))$ let
\begin{equation}\label{L(h)}
   L(h)f(v):=f(h^{-1}v)\,. 
\end{equation}
The following is \cite[Lemma 5.1.2]{HarrisKudla:Arithmetic}:
\begin{lem} \label{lem:HK}
The map
\begin{align*}
\mathrm{SL}_2(F) \rtimes \mathrm{GO}_{\mathcal{Q}_i}(F) \times \mathcal{S}(V_i(F)) \lto \mathcal{S}(V_i(F))\\
(g \rtimes h ,f) \longmapsto \rho(g)(L(h)f)
\end{align*}
defines an action of $\mathrm{SL}_2(F) \rtimes \mathrm{GO}_{\mathcal{Q}_i}(F)$ on $\mathcal{S}(V_i(F))$. \qed
\end{lem}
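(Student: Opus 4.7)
The strategy is to observe that the composition $\rho(g)L(h)$ defines a group action of the semidirect product if and only if the Weil representation $\rho$ and the translation representation $L$ satisfy the intertwining identity
\[
L(h)\,\rho(g')\,L(h)^{-1} \;=\; \rho\!\left(m(\lambda)\, g'\, m(\lambda)^{-1}\right), \qquad m(\lambda) := \left(\begin{smallmatrix} 1 & \\ & \lambda \end{smallmatrix}\right),\quad \lambda := \lambda(h),
\]
for all $g' \in \mathrm{SL}_2(F)$ and $h \in \mathrm{GO}_{\mathcal{Q}_i}(F)$. Indeed, combining $L(h_1 h_2) = L(h_1)L(h_2)$ and the multiplicativity of $\rho$ with this identity reproduces precisely the semidirect-product composition law recalled in the statement of the lemma. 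Because $\mathrm{SL}_2$ is generated by upper-triangular unipotents $n(t) = \left(\begin{smallmatrix} 1 & t \\ & 1 \end{smallmatrix}\right)$, diagonal elements $\mathrm{diag}(a,a^{-1})$, and the Weyl element $w = \left(\begin{smallmatrix} & 1 \\ -1 & \end{smallmatrix}\right)$, it suffices to verify the identity on these three generators.

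For $g' = n(t)$ one computes $m(\lambda) n(t) m(\lambda)^{-1} = n(\lambda^{-1}t)$, and the identity $\mathcal{Q}_i(h^{-1}v) = \lambda^{-1}\mathcal{Q}_i(v)$, an immediate consequence of ${}^t h J_i h = \lambda J_i$, makes both sides agree with multiplication by $\psi(\lambda^{-1} t\, \mathcal{Q}_i(v))$. For $g' = \mathrm{diag}(a,a^{-1})$, diagonal matrices commute with $m(\lambda)$, so the conjugation on the right-hand side is trivial; the explicit formula $\rho(\mathrm{diag}(a,a^{-1}))f(v) = \chi_{\mathcal{Q}_i}(a)|a|^{d_i/2} f(av)$ visibly commutes with $L(h)$, since the scalar $\chi_{\mathcal{Q}_i}(a)|a|^{d_i/2}$ is independent of the point and $L(h)$ commutes with dilations in $v$.

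The heart of the proof, and the main obstacle, is the case $g' = w$. A direct change of variables in the integral defining $\rho(w)$, using the orthogonality relation ${}^t h^{-1} J_i h^{-1} = \lambda^{-1}J_i$ together with the Jacobian $|\det h| = |\lambda|^{d_i/2}$, shows that after simplification the two sides of the intertwining identity differ by exactly the scalar $\chi_{\mathcal{Q}_i}(\lambda(h))$. The remaining task is therefore to prove
\[
\chi_{\mathcal{Q}_i}\!\left(\lambda(h)\right) \;=\; 1 \qquad \text{for every } h \in \mathrm{GO}_{\mathcal{Q}_i}(F).
\]
The element $h$ itself exhibits an isometry $(V_i, \lambda\mathcal{Q}_i) \cong (V_i, \mathcal{Q}_i)$, so the Hasse invariants of these two forms must coincide. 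Diagonalizing $\mathcal{Q}_i$ and manipulating Hilbert symbols (using the evenness of $d_i$ and $(\lambda,\lambda) = (\lambda,-1)$) yields
\[
\epsilon(\lambda \mathcal{Q}_i) \;=\; (\lambda,\, (-1)^{d_i/2}\det J_i)\cdot \epsilon(\mathcal{Q}_i) \;=\; \chi_{\mathcal{Q}_i}(\lambda)\,\epsilon(\mathcal{Q}_i).
\]
Since $\epsilon(\lambda\mathcal{Q}_i) = \epsilon(\mathcal{Q}_i)$, we deduce $\chi_{\mathcal{Q}_i}(\lambda(h)) = 1$, which completes the Weyl element case and hence the lemma.
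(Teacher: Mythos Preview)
Your proof is correct. The paper itself does not give a proof of this lemma; it simply cites \cite[Lemma~5.1.2]{HarrisKudla:Arithmetic} and then remarks that a direct proof from the explicit formulas for the Weil representation is easy once one knows the key fact $\chi_{\mathcal{Q}_i}(\lambda(h))=1$ for all $h\in\mathrm{GO}_{\mathcal{Q}_i}(F)$, which the paper isolates as a separate lemma. Your reduction to the intertwining identity and your verification on the three types of generators is exactly the direct argument the paper alludes to.

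The one genuine difference is in how the vanishing $\chi_{\mathcal{Q}_i}(\lambda(h))=1$ is established. The paper invokes a lemma of Dieudonn\'e: the similitude $\lambda(h)$ is always a norm from the center of the even Clifford algebra of $\mathcal{Q}_i$, which is the quadratic \'etale algebra $F[X]/(X^2-(-1)^{d_i/2}\det J_i)$, and the associated quadratic character is precisely $\chi_{\mathcal{Q}_i}$. Your argument is more elementary and self-contained: the existence of $h$ forces the forms $\mathcal{Q}_i$ and $\lambda\mathcal{Q}_i$ to be isometric, hence to have equal Hasse invariants, and a direct Hilbert-symbol manipulation shows $\epsilon(\lambda\mathcal{Q}_i)=\chi_{\mathcal{Q}_i}(\lambda)\,\epsilon(\mathcal{Q}_i)$. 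Both approaches are standard; yours avoids any appeal to the structure theory of Clifford algebras, while the paper's route is more conceptual and identifies the image of $\lambda$ rather than merely its kernel under $\chi_{\mathcal{Q}_i}$.
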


Strictly speaking, the definition of $L(h)$ in loc.~cit. is slightly different in that they renormalized $L(h)$ by a power of the similitude character, but this does not affect the validity of the lemma. We note in particular that the actions of $\mathrm{GO}_{\mathcal{Q}_i}(F)$ and $\mathrm{SL}_2(F)$ on $\mathcal{S}(V_i(F))$ do not commute.  

In fact, it is easy to prove Lemma \ref{lem:HK} directly from the definition of the Weil representation given the following fact:
\begin{lem} \label{lem:char:triv} Let $W$ be an even-dimensional vector space over $F$ and let $Q$ be a nondegenerate quadratic form on $W$.  Let $\Phi \in \GL_d(F)$ be the matrix of $Q$, let $\chi_Q(a):=(a,(-1)^{d/2}\det \Phi)$ and let $\mathrm{GO}_{Q}$ be the similitude group of $Q$ with similitude character $\lambda:\mathrm{GO}_Q \to \GG_m$.  Then
$$
\chi_Q(\lambda(g))=1
$$
for all $g \in \mathrm{GO}_Q(F)$.
\end{lem}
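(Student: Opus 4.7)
The plan is to realize $\chi_Q(\lambda(g))$ as the ratio of Hasse--Witt invariants of the quadratic forms $\lambda Q$ and $Q$, with $\lambda := \lambda(g)$, and to note that this ratio must equal $1$ because the two forms are isomorphic over $F$.

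First, the defining equation ${}^tg\Phi g = \lambda \Phi$ of a similitude is precisely the statement that $g \colon (W, \lambda Q) \to (W, Q)$ is an isometry of quadratic spaces over $F$. In particular $\lambda Q \cong Q$, so these forms have identical Hasse--Witt invariants. Fixing a diagonalization $Q = \langle a_1, \ldots, a_d\rangle$ so that $\det \Phi \equiv a_1 \cdots a_d \pmod{(F^\times)^2}$, I would let $s(Q) := \prod_{i<j}(a_i,a_j)$ denote the Hasse--Witt invariant and compute the ratio $s(\lambda Q)/s(Q)$ using bimultiplicativity of the Hilbert symbol and the identity $(\lambda,\lambda) = (\lambda,-1)$:
\begin{align*}
\frac{s(\lambda Q)}{s(Q)} = \prod_{i<j}(\lambda,\lambda)(\lambda,a_i)(\lambda,a_j) = (\lambda, -1)^{\binom{d}{2}}(\lambda, \det \Phi)^{d-1}.
\end{align*}
Writing $d = 2m$, the exponent $\binom{d}{2} = m(2m-1)$ has the parity of $m$, and $d-1$ is odd, so modulo the relation $(\lambda, \cdot)^2 = 1$ this reduces to
\begin{align*}
(\lambda, (-1)^m)(\lambda, \det \Phi) = (\lambda, (-1)^m \det \Phi) = \chi_Q(\lambda).
\end{align*}
Combining with the previous paragraph yields $\chi_Q(\lambda) = 1$, as desired.

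The main point to handle is the archimedean case, where one should confirm that the Hasse--Witt formalism applies. Over $\CC$ there is nothing to prove, since the Hilbert symbol is trivial. Over $\RR$ the computation above still goes through; alternatively, one can argue by signatures: if $\lambda > 0$ then $(\lambda, \cdot)$ is trivial on $\RR^\times$, while $\lambda < 0$ combined with $\lambda Q \cong Q$ forces $Q$ to have balanced signature $(m,m)$, whence $\mathrm{sign}(\det \Phi) = (-1)^m$ and so $(-1)^m \det \Phi > 0$, again giving $\chi_Q(\lambda) = 1$. Beyond this, the only technicalities are bookkeeping with the conventions for the Hasse--Witt invariant and the discriminant.
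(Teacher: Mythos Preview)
Your proof is correct and takes a genuinely different route from the paper's. The paper invokes a lemma of Dieudonn\'e stating that for $g \in \mathrm{GO}_Q(F)$ the similitude factor $\lambda(g)$ is a norm from the center of the even Clifford algebra of $Q$; since that center is the quadratic \'etale algebra $F[X]/(X^2 - (-1)^{d/2}\det\Phi)$, local class field theory identifies the associated quadratic character with $a \mapsto (a,(-1)^{d/2}\det\Phi)$, which therefore kills $\lambda(g)$.

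Your argument is more elementary and self-contained: you avoid Clifford algebras entirely by reading $\chi_Q(\lambda)$ off as the ratio $s(\lambda Q)/s(Q)$ of Hasse--Witt invariants, which must be $1$ because $g$ exhibits $\lambda Q \cong Q$. The computation with Hilbert symbols is straightforward and the parity bookkeeping with $d=2m$ is clean. The paper's approach has the virtue of explaining \emph{why} the result holds conceptually (similitude factors are norms), and it would generalize more readily to settings where one wants to track spinor-norm-type information; your approach, on the other hand, requires nothing beyond the basic theory of quadratic forms over local fields and is arguably the more direct proof of exactly this statement.
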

\noindent The proof of this lemma is omitted in \cite{HarrisKudla:Arithmetic} so we give it for the convenience of the reader.

\begin{proof}
By a lemma of Diedonn\'e $\lambda(g)$ is a norm from the center of the even Clifford algebra of $Q$ \cite[Lemma 13.22]{Involutions}.  This center is the quadratic \'etale $F$-algebra 
$$
F[X]/(X^2-(-1)^{d(d-1)/2}\det \Phi)=F[X]/(X^2-(-1)^{d/2}\det \Phi)
$$ 
\cite[Theorem 8.2]{Involutions} and the character attached to this quadratic \'etale $F$-algebra by local class field theory is precisely 
$(a,(-1)^{d/2}\det \Phi)$.
\end{proof}

\subsection{Theta functions} \label{ssec:Theta}

In this subsection we work globally over the number field $F$.
The global tensor product of the local representations of \S \ref{ssec:Weil} is a representation of $\mathrm{SL}_2(\A_F)$ on $\mathcal{S}(V_i(\A_F))$ and we therefore obtain a representation
\begin{equation}\label{rhoglobal}
  \rho:=\rho_\psi:G(\A_F) \times \mathcal{S}(V(\A_F)) \lto \mathcal{S}(V(\A_F))\,.  
\end{equation}

For $f \in \mathcal{S}(V(\A_F))$ and $g \in G(\A_F)$, we let 
\begin{align}\label{Theta}
\Theta_f(g):=\sum_{\gamma \in V(F)} \rho(g)f(\gamma)\,.
\end{align}
It is obvious that the sum here is absolutely convergent.  This is the usual $\Theta$ function, although we are only considering its behavior in the symplectic variable (note that $\SL_2=\mathrm{Sp}_2$).  We always take the argument of the function in the orthogonal variable to be the identity in the appropriate product of orthogonal groups.  Thus we have suppressed this variable from notation.

\section{Another space of functions} \label{sec:Schwartz}

Let $v$ be a place of the number field $F$ and let $F:=F_v$.  
In this section we start by recalling the Schwartz spaces of Braverman and Kazhdan \cite{BK:normalized}, specialized to our setting, and then apply it to construct a new space of functions that combines the space of functions in loc.~cit. with $\mathcal{S}(V(F))$.  We should point out that the papers \cite{Shahidi:LF,Shahidi:FT} provide valuable additional information about Braverman and Kazhdan's Schwartz spaces.

\subsection{Schwartz spaces} \label{ssec:Schwartz:spaces}
Let $K \leq \mathrm{Sp}_6(F)$ be a  maximal compact subgroup that is conjugate to $\mathrm{Sp}_6(\OO)$ if $F$ is non-Archimedean.
 In \cite{Getz:Liu:BK} the authors defined a Schwartz space $\mathcal{S}_{BK}(X(F),K)$ of functions on $X(F)$ roughly following the approach of Braverman and Kazhdan.  
Functions in $\mathcal{S}_{BK}(X(F),K)$ are smooth and $K$-finite under the natural right action of $K$ on $X(F)$.   
We recall the growth properties of these functions in this section.
 
Recall that the norm of $x \in X(F)$ is defined in \eqref{norm:def}.  The following is \cite[Lemmas 5.1 and 5.7]{Getz:Liu:BK}:
 
\begin{lem} \label{lem:bounded} Let
 $g \in \mathrm{Sp}_6(F)$ and $\Phi \in \mathcal{S}_{BK}(X(F),K)$.  If $F$ is non-Archimedean one has
$$
|\Phi(g)|_{\mathrm{st}} \ll_{\Phi} |g|^{-2}\,.
$$
The support of $\Phi$ is contained in 
$$
\bigcup_{c>-N}[P,P](F)c(\varpi)\mathrm{Sp}_6(\OO)\,,
$$
for sufficiently large $N$ (depending on $\Phi$). 
If $F$ is Archimedean for any $N \in \ZZ_{\geq 0}$  
one has
$$
|\Phi(g)| \ll_{\Phi,N}|g|^{-2-N}\,.
$$ 
\qed
\end{lem}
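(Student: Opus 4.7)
The statement of this lemma is precisely \cite[Lemmas 5.1 and 5.7]{Getz:Liu:BK}, and my plan is to invoke those results directly, since they are proven in detail in that companion paper. To indicate how such bounds arise, I sketch the underlying strategy below.

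First I would use the Iwasawa decompositions \eqref{Iwasawa:1} and \eqref{Iwasawa:2} together with the $K$-finiteness of $\Phi$ to reduce the problem to bounding $\Phi$ along the one-parameter families $\{c(\varpi) : c \in \ZZ\}$ in the non-Archimedean case and $\{1(t) : t \in \RR_{>0}\}$ in the Archimedean case. The $K$-finiteness is essential, since it guarantees that $\Phi$ is controlled, up to a finite sum of matrix coefficients, by its restriction to the relevant torus ray. A direct computation with the Pl\"ucker embedding \eqref{Pl} shows that $|c(\varpi)|$ equals a fixed power of $|\varpi|^{-1}$ depending linearly on $c$, and similarly $|1(t)|$ is a fractional power of $t$, so asymptotic bounds in the torus variable translate via \eqref{inj} into the desired polynomial bounds in $|g|$.

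The heart of the argument lies in the structural description of $\mathcal{S}_{BK}(X(F),K)$: it is generated by Braverman-Kazhdan basic functions under translation by $\mathrm{Sp}_6(F)$ and the action of $K$. In the non-Archimedean unramified case the basic function admits a Mellin representation against a product of local $L$-factors attached to the relevant cocharacter of $M/[M,M]$; the exponent $-2$ in the bound reflects the location of the leading pole of this product combined with the contribution of the modulus character $\delta_P^{1/2}$, while the support statement comes from the fact that shifting the Mellin contour far to the right picks up no residues once the cocharacter exits the appropriate monoid. In the Archimedean case the basic function is patched together from intertwining operator data and a classical Schwartz cutoff, producing decay faster than any polynomial in $|g|^{-1}$.

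The main obstacle in a self-contained treatment is propagating the bounds from the basic function to an arbitrary element of $\mathcal{S}_{BK}(X(F),K)$: one must verify that the estimates are preserved under the $\mathrm{Sp}_6(F)$-action that defines the Schwartz space and are uniform over $K$-types, and in the non-Archimedean case one must control what happens under ramified twists. This analysis is exactly what is carried out in \cite[\S 5]{Getz:Liu:BK}; in the present paper I would invoke it as a black box to conclude.
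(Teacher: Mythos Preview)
Your proposal is correct and matches the paper's approach exactly: the paper states this lemma with a \qed and no proof, citing it verbatim as \cite[Lemmas 5.1 and 5.7]{Getz:Liu:BK}. Your additional sketch of the mechanism behind those bounds is reasonable but goes beyond what the paper itself provides.
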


For $F$ non-Archimedean {define}
\begin{align}\label{b}
b(g)=\sum_{j=0}^\infty \sum_{k=0}^\infty q^{2j}\one_{[P,P](F)(k+2j)(\varpi)\mathrm{Sp}_6(\OO)}(g) \in \mathcal{S}_{BK}(X(F),\mathrm{Sp}_6(\OO))\,,
\end{align}
where $q$ is the cardinality of the residue field.
The following is \cite[Lemma 5.3]{Getz:Liu:BK}:
\begin{lem} \label{lem:basic:bound}
Assume that $F$ is non-Archimedean.  Let $\varepsilon>0$. For $q$  sufficiently large in a sense depending on $\varepsilon$ one has
$$
|b(g)| \leq  |g|^{-2-\varepsilon}\,.
$$
\qed
\end{lem}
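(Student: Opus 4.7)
The plan is to compute both $b(g)$ and $|g|$ explicitly as functions of the unique double coset containing $g$, and then verify the resulting inequality by elementary algebra.

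By the Iwasawa decomposition \eqref{Iwasawa:1}, each $g \in X(F)$ lies in a unique double coset $[P,P](F)c(\varpi)\mathrm{Sp}_6(\OO)$ for some $c \in \ZZ$, and the characteristic functions appearing in \eqref{b} are indexed by nonnegative integers $k+2j$. Hence $b(g) = 0$ when $c < 0$ and the bound is trivial in that range. For $c \geq 0$ the pairs $(j,k) \in \ZZ_{\geq 0}^2$ contributing to $b(g)$ are precisely those with $k + 2j = c$, giving
\[
b(g) \;=\; \sum_{j=0}^{\lfloor c/2 \rfloor} q^{2j} \;=\; \frac{q^{2\lfloor c/2\rfloor +2}-1}{q^2-1} \;\leq\; \frac{q^{c+2}}{q^2-1}.
\]

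Next I would verify that $|g| = q^{-c}$. Writing $p \in [P,P](F)$ in the form $\left(\begin{smallmatrix} A & AZ \\ & {}^tA^{-1}\end{smallmatrix}\right)$ with $A \in \SL_3$, left multiplication by $p$ alters the bottom block $B$ of $g$ by the factor ${}^tA^{-1}$, so by \eqref{Pl} the Pl\"ucker image changes by $\det({}^tA^{-1}) = 1$. Right multiplication by $k \in \mathrm{Sp}_6(\OO)$ induces an element of $\GL(\wedge^3 \OO^6)$, which preserves the max-norm by the invariance property recorded after \eqref{norm:def}. Hence $|g| = |c(\varpi)|$, and a direct computation using \eqref{c:def} yields $\mathrm{Pl}(c(\varpi)) = \varpi^c\, e_4 \wedge e_5 \wedge e_6$, so $|g| = q^{-c}$.

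Combining, the desired inequality $b(g) \leq |g|^{-2-\varepsilon} = q^{c(2+\varepsilon)}$ holds with equality when $c=0$, and for $c \geq 1$ reduces via the estimate above to $q^{2-c(1+\varepsilon)} \leq q^2 - 1$. The left-hand side is maximized at $c = 1$, where it equals $q^{1-\varepsilon}$, which is bounded by $q^2 - 1$ for $q$ sufficiently large in terms of $\varepsilon$; for $c \geq 2$ the exponent $2-c(1+\varepsilon)$ is nonpositive, so the left-hand side is at most $1$. There is no genuine obstacle in this argument; the only care required is in the Pl\"ucker-norm bookkeeping (the invariance under $[P,P](F)$ on the left and $\mathrm{Sp}_6(\OO)$ on the right, together with the direct evaluation of $|c(\varpi)|$).
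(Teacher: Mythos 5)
Your proof is correct. The paper cites this lemma from \cite{Getz:Liu:BK} rather than reproducing its proof, but your approach — explicitly evaluate $b$ on the coset $[P,P](F)c(\varpi)\mathrm{Sp}_6(\OO)$ as the geometric series $\sum_{j=0}^{\lfloor c/2\rfloor}q^{2j}$, identify $|g|=q^{-c}$ via the Pl\"ucker embedding, and compare exponents — is the natural (and essentially forced) route; indeed the identification $\one_c(g)\neq 0 \Leftrightarrow |g|=q^{-c}$ that you re-derive is exactly what the paper records as Lemma~\ref{lem:Iwasawa}. One small remark: your estimate actually gives the bound for \emph{every} $q\geq 2$, since $q^{1-\varepsilon}<q\leq q^2-1$ holds already for $q\geq 2$; the hypothesis ``$q$ sufficiently large'' is thus not needed here, though it costs nothing to retain it.
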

Let $\psi:F \to \CC^\times$ be a nontrivial character.  In loc.~cit. we also defined a Fourier transform
\begin{align}\label{mathcalF}
\mathcal{F}:=\mathcal{F}_{BK,\psi}:\mathcal{S}_{BK}(X(F),K)  \lto \mathcal{S}_{BK}(X(F),K)\,.
\end{align}
{Assume $F$ is non-Archimedean and $\psi$ is unramified. Then the function $b$ of \eqref{b} enjoys the following three properties: 
\begin{enumerate}
\item $b(xk)=b(x)$ for all $(x,k) \in X(F) \times \Sp_6(\OO)$,
    \item $\mathcal{F}(b)=b$ (see \cite[Lemma 5.4]{Getz:Liu:BK}),
    \item The support of $b$ is integral in the sense that it is mapped to elements of $\wedge^n\OO^{2n}$ under the Pl\"ucker embedding $\mathrm{Pl}$ of \eqref{Pl:diag}.
\end{enumerate}
Because of this we refer to $b$ as the \textbf{basic function} in $\mathcal{S}(X(F),\Sp_6(\OO))$.
  Using ``the'' is an abuse of language because 
  the conditions above do not specify $b$ uniquely.  For example any scalar multiple of $b$ would also satisfy these conditions.  However it is a convenient abuse of language that we will continue to use.  
}

\subsection{Local functions} \label{ssec:loc:func}

For $(f_1,f_2) \in \mathcal{S}_{BK}(X(F),K) \times \mathcal{S}(V(F))$ let
\begin{align} \label{Is}\begin{split}
I(f_1,f_2)\left(v \right)&=\int_{N_0(F) \backslash G(F)} f_1\left(\gamma_{0}g\right) \rho\left(g\right)f_2(v)dg, \quad v \in Y^{\mathrm{sm}}(F)\,.
\end{split}
\end{align}
This is the local factor of the integral one obtains after unfolding the integral of our theta function $\Theta_{f_2}$ against $\sum_{\gamma \in X(F)}f_1(\gamma g)$ as explained informally after \eqref{basic:id}.  The full argument is given in the proof of Theorem \ref{thm:main} below.  
It is interesting to note that the integral is not well-defined if one tries to evaluate it at a general $v \in V'(F)$ because the function $\rho (g)f_2(v)$ is only left invariant under $N_0(F)$ for $v \in Y(F)$.  {However, the integral 
\begin{align} \label{Isabs}\begin{split}
&\int_{N_0(F) \backslash G(F)} |f_1\left(\gamma_{0}g\right) \rho\left(g\right)f_2(v)|dg, \quad v \in V'(F)\,.
\end{split}
\end{align}
is well-defined because $|\rho (g)f_2(v)|$ is left invariant under $N_0(F)$.}

In \S \ref{sec:unr} we will compute \eqref{Is} in the unramified case, and in \S \ref{sec:bound:na} and \S \ref{sec:bound:arch} we will bound it {by bounding \eqref{Isabs}} in the non-Archimedean and Archimedean cases, respectively.

\subsection{A transform} \label{ssec:transform}

Consider the transform
\begin{align*}
I(f_1,f_2) &\longmapsto I(\mathcal{F}(f_1),f_2)\,.
\end{align*}
It can profitably be viewed as a sort of Fourier transform.
\begin{rem} {If $F$ is non-Archimedean, $\psi$ is unramified, the matrices $J_i$ defining the $\mathcal{Q}_i$ are in $\GL_{d_i}(\OO)$, and $\rho(k)\one_{V(\OO)}=\one_{V(\OO)}$ for all $k \in \SL_2^3(\OO)$  then $I(b,\one_{V(\OO)})$ can be thought of as a basic function:
\begin{enumerate}
\item $I(b,\one_{V(\OO)})(k^{-1}v)=I(b,\one_{V(\OO)})(v)$ for $(k,v) \in H(\OO) \times Y(\OO)$ (see Proposition \ref{prop:I0comp}),
\item The function $I(b,\one_{V(\OO)})$ is invariant under the transform  $I(f_1,f_2) \mapsto I(\mathcal{F}(f_1),f_2)$,
\item The support of $I(b,\one_{V(\OO)})$ is contained in $V(\OO) \cap Y(\OO)$ (see Proposition \ref{prop:I0comp}).  
\end{enumerate}
\noindent 
Here we have given $V=\prod_{i=1}^3\GG_a^{d_i}$ the evident structure of a scheme over $\OO$, given $Y$ the structure of a scheme over $\OO$ by taking the schematic closure of $Y_{F}$ in $V$, and given $H$ the evident structure of a group scheme over $\OO$ using the assumption that the $J_i$ are in $\GL_{d_i}(\OO)$.} 
\end{rem}

We now compute {the behavior of the transform under the group $H$ in \eqref{H}}.
For $h \in H(F)$ let 
\begin{align} \label{Lambdah}
\Lambda(h):=\left(\begin{smallmatrix}I_3 & \\  & \lambda(h)I_3 \end{smallmatrix} \right),
\end{align}
where $\lambda$ is the similitude norm in \eqref{lambda}.
For $F$-algebras $R$ let
\begin{align}\label{omega}
\begin{split}
\omega:M(R) &\lto R^\times\\
\left(\begin{smallmatrix} A & \\ & {}^tA^{-1} \end{smallmatrix}\right) &\longmapsto \det A\,.
\end{split}
\end{align}
For $\chi:F^\times \to \CC^\times$ a character and $s \in \CC$ let 
$\chi_s:=\chi|\cdot|^s$.  For
{$f \in \mathcal{S}(X(F),K)$ and $g \in  \mathrm{Sp}_6(F)$ let
\begin{equation}\label{fchis:local}
 f_{\chi_s}(g):=\int_{ M^{\mathrm{ab}}(F)}\delta_P(m)^{1/2}\chi_{s}\left( \omega( m) \right)
f(m^{-1}g)\,dm.   
\end{equation}
This converges for $\mathrm{Re}(s)$ sufficiently large and admits a meromorphic continuation to the $s$ plane for each fixed $g$, see \cite[\S 4]{Getz:Liu:BK}.}

For functions $f$ on $V(F)$ let $L(h)f(v):=f(h^{-1}v)$.

\begin{lem} \label{lem:equiv}
Let $(f_1,f_2) \in \mathcal{S}_{BK}(X(F),K) \times \mathcal{S}(V(F))$ and $h \in H(F)$.  Let
$$
\widetilde{f}_1(g):=f_1\left(\gamma_{0}\Lambda(h)^{-1}\gamma_0^{-1} g\Lambda(h)\right).
$$
Then $\widetilde{f}_1 \in \mathcal{S}_{BK}\left(X(F),\Lambda(h)K\Lambda(h)^{-1}\right)$ and the following equalities hold:
\begin{align*}
L(h)I(f_1,f_2)&=|\lambda(h)|^{-2}I(\widetilde{f}_1,L(h)f_2)\,,\\
    I(\mathcal{F}(\widetilde{f}_1),L(h)f_2)&=|\lambda(h)|^{\sum_{i=1}^3d_i/2}L\left(\frac{h}{\lambda(h)}\right)I(\mathcal{F}(f_1),f_2)\,.
\end{align*}
\end{lem}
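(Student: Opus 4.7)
The plan is to handle the three assertions in order. For the Schwartz-space membership of $\widetilde{f}_1$, an explicit matrix computation using the $\gamma_0$ from \S\ref{sec:groups:orbits} yields
\[
\gamma_0\Lambda(h)^{-1}\gamma_0^{-1} = \mathrm{diag}\bigl(\lambda(h)^{-1},1,1,1,\lambda(h)^{-1},\lambda(h)^{-1}\bigr),
\]
which lies in the Levi $\widetilde{M}$ of the Siegel parabolic of $\GSp_6$. Since $\widetilde{M}$ normalizes $[P,P]$, left multiplication by this element descends to $X(F)$, so $\widetilde{f}_1$ is a well-defined function on $X(F)$; right $\Lambda(h)K\Lambda(h)^{-1}$-finiteness is immediate from the right $K$-finiteness of $f_1$, and Schwartz decay transfers from $f_1$ to $\widetilde{f}_1$ by the estimates of \cite[\S5]{Getz:Liu:BK}.

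For the first identity, I unfold $L(h)I(f_1,f_2)(v) = I(f_1,f_2)(h^{-1}v)$ and apply Lemma~\ref{lem:HK} to each $\SL_2$-factor to obtain $L(h)\rho(g) = \rho(c_hgc_h^{-1})L(h)$, where $c_h := \mathrm{diag}(1,\lambda(h))^3\in\GL_2^3$. Substituting $g = c_h^{-1}g'c_h$ and using that the image of $c_h$ under the natural extension $\GL_2^3\hookrightarrow\GSp_6$ is exactly $\Lambda(h)$, we get $\gamma_0c_h^{-1}g'c_h = \gamma_0\Lambda(h)^{-1}g'\Lambda(h)$, so the integrand becomes $\widetilde{f}_1(\gamma_0g')\cdot\rho(g')L(h)f_2(v)$ by the definition of $\widetilde{f}_1$. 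The factor $|\lambda(h)|^{-2}$ arises from the measure change on $N_0\backslash G$: while conjugation by $c_h$ preserves Haar on $G(F)$, the action $n\mapsto c_hnc_h^{-1}$ sends each of the two independent coordinates $t_i$ on $N_0$ to $\lambda(h)^{-1}t_i$, so the Jacobian of the substitution on $N_0\backslash G$ is $|\lambda(h)|^{-2}$.

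For the second identity, I apply the first identity with $f_1$ replaced by $\mathcal{F}(f_1)$ to get $L(h)I(\mathcal{F}(f_1),f_2) = |\lambda(h)|^{-2}I(\widetilde{\mathcal{F}(f_1)},L(h)f_2)$. Using Lemma~\ref{lem:char:triv} (so $\chi_{\mathcal{Q}}(\lambda(h))=1$) and the Weil-representation formula for $\mathrm{diag}(a,a^{-1})$, I record the operator identity
\[
|\lambda(h)|^{\sum d_i/2}L(h/\lambda(h)) = L(h)\rho(\tau_{\lambda(h)}),\qquad \tau_\lambda := \mathrm{diag}(\lambda,\lambda^{-1})^3\in G(F).
\]
Applying this to $I(\mathcal{F}(f_1),f_2)$ and changing variables on $N_0\backslash G$ to absorb $\tau_{\lambda(h)}$ into the $g$-integration---which introduces a Jacobian of $\Delta_{N_0}(\tau_{\lambda(h)}) = |\lambda(h)|^4$---yields an integral of $\mathcal{F}(f_1)(\gamma_0\tau_{\lambda(h)}^{-1}g)\cdot\rho(g)L(h)f_2(v)$ against $dg$. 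The equivariance of the Braverman--Kazhdan Fourier transform under left translation by $\widetilde{M}$ then identifies this with $I(\mathcal{F}(\widetilde{f}_1),L(h)f_2)(v)$.

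The main obstacle is this last step: three sources contribute powers of $|\lambda(h)|$---$|\lambda(h)|^{-2}$ from conjugation by $c_h$ via the first identity, $|\lambda(h)|^{4}$ from left translation by $\tau_{\lambda(h)}$ on $N_0\backslash G$, and the contribution implicit in the rewriting of $|\lambda(h)|^{\sum d_i/2}L(h/\lambda(h))$ in terms of $\rho(\tau_{\lambda(h)})$---and they must combine so that the net effect matches the desired right-hand side. Pinning down the precise equivariance of $\mathcal{F}$ under the left $\widetilde{M}$-action is the technical heart of the matter.
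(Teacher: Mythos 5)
The first identity is handled correctly and by essentially the same route as the paper: you unfold $L(h)$ through $\rho(g)$ via Lemma~\ref{lem:HK}, identify the conjugating element $\mathrm{diag}(1,\lambda(h))^3$ with $\Lambda(h)$ under the embedding \eqref{Gembed}, and account for the Jacobian $|\lambda(h)|^{-2}$ of conjugation on $N_0\backslash G$. Your membership argument for $\widetilde{f}_1$ is slightly off in emphasis: what needs to be verified is that $\widetilde{f}_{1\chi_s}$ is an excellent section in the sense of \cite[\S 3]{Getz:Liu:BK}, which follows because $\gamma_0\Lambda(h)^{-1}\gamma_0^{-1}$ normalizes $M$ (the decay estimates of \cite[\S 5]{Getz:Liu:BK} are a consequence of, not a substitute for, this definition), but the idea is right.

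For the second identity there is a genuine gap. Your operator identity $|\lambda(h)|^{\sum d_i/2}L(h/\lambda(h)) = L(h)\rho(\tau_{\lambda(h)})$ is correct (using $\chi_{\mathcal Q}(\lambda(h))=1$), and the Jacobian bookkeeping $|\lambda(h)|^{-2}\cdot|\lambda(h)|^{4}=|\lambda(h)|^{2}$ is also correct. But the final, decisive step—\emph{identifying} $|\lambda(h)|^{2}\mathcal{F}(f_1)\bigl(\gamma_0\Lambda(h)^{-1}\tau_{\lambda(h)}^{-1}g\Lambda(h)\bigr)$ with $\mathcal{F}(\widetilde{f}_1)(\gamma_0 g)$—is exactly equation \eqref{FTtilde} of the paper, and you invoke it under the label ``equivariance of $\mathcal F$ under left translation by $\widetilde M$'' without proving it. That equivariance is not a formality: $\mathcal F$ is defined via the normalized intertwining operator $M_{w_0}$, and left translation by $D=\gamma_0\Lambda(h)^{-1}\gamma_0^{-1}\in\widetilde{M}$ interacts with $M_{w_0}$ through conjugation by $w_0$ together with a Jacobian/character factor coming from the change of variables on $N$; one must compute $w_0 D w_0^{-1}$ explicitly (the paper gets $\mathrm{diag}(\lambda^{-1},\lambda^{-1},1,1,1,\lambda^{-1})$), absorb an $[M,M]$-correction, and track the scalar $|\lambda(h)|^{2}$. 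You explicitly flag this as ``the technical heart of the matter'' and then do not carry it out; the paper's proof \emph{is} precisely this computation (the display \eqref{intertwine:comp} through \eqref{FTtilde}). Until that step is supplied, the second identity is not proved.
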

\begin{proof}
Using Lemma \ref{lem:HK} we have
\begin{align*}
L(h)I(f_1,f_2)\left(v \right)&=\int_{N_0(F) \backslash G(F)} f_1\left(\gamma_{0}g\right)
L(h)\rho(g)f_2(v)dg\\
&=\int_{N_0(F) \backslash G(F)} f_1\left(\gamma_{0}g\right) \rho\left(\left(\begin{smallmatrix} 1 & \\ &  \lambda(h) \end{smallmatrix} \right)g\left(\begin{smallmatrix}1 & \\  & \lambda(h)^{-1} \end{smallmatrix} \right)\right)L(h)f_2(v)dg\\
&=|\lambda(h)|^{-2}\int_{N_0(F) \backslash G(F)} f_1\left(\gamma_{0}\Lambda(h)^{-1}g\Lambda(h)\right) \rho\left(g\right)L(h)f_2(v)dg\,.
\end{align*}
To show $\widetilde{f}_1 \in \mathcal{S}_{BK}\left(X(F),\Lambda(h)K\Lambda(h)^{-1}\right)$ it suffices to check that for each character $\chi:F^\times \to \CC^\times$ the section
$\widetilde{f}_{1\chi_s}$
is excellent in the sense of \cite[\S 3]{Getz:Liu:BK}.
Since $\gamma_{0}\Lambda(h)^{-1}\gamma_0^{-1}$ normalizes $M(F)$ and $f_{\chi_s}$ is an excellent section by definition of $\mathcal{S}_{BK}(X(F),K)$, this is obvious. 

To complete the proof of the lemma we must compute
 $\mathcal{F}(\widetilde{f}_1)$.
Let 
$$
w_0:=\left(\begin{smallmatrix} & & & & & -1 \\ & & & & -1 & \\ & & & -1 & & \\ & & 1 & & & \\ & 1 & & & &\\ 1 & & & & &\end{smallmatrix} \right).
$$
Using the notation of \cite[\S 3]{Getz:Liu:BK} we compute
\begin{align} \label{intertwine:comp}
M_{w_0}\widetilde{f}_{1\chi_s}(g):&=
\int_{N(F)}\int_{M^{\mathrm{ab}}(F)}\delta_P^{1/2}(m)\chi_s(\omega(m))f_1\left(\gamma_{0}\Lambda(h)^{-1}\gamma_0^{-1}m^{-1} w_0^{-1}ng\Lambda(h)\right)dmdn\,.
\end{align}
Here we take $\mathrm{Re}(s)$ large to ensure convergence.  
One has 
\begin{align*}
[M,M](F)\gamma_{0}\Lambda(h)^{-1}\gamma_0^{-1}m^{-1}w_0^{-1}n&=[M,M](F)m^{-1}\gamma_{0}\Lambda(h)^{-1}\gamma_0^{-1}w_0^{-1}n\\
&=[M,M](F)m^{-1}w_0^{-1} (w_0\gamma_0\Lambda(h)^{-1}\gamma_0^{-1}w_0^{-1})n\,.
\end{align*}
We have
\begin{align*}
w_0\gamma_0\Lambda(h)^{-1}\gamma_0^{-1}w_0^{-1}=\left(\begin{smallmatrix} \lambda(h)^{-1} & & & & &\\ & \lambda(h)^{-1} & & & &\\ & & 1 & & &\\ & & & 1 & & \\ & & & & 1 & \\ & & & & & \lambda(h)^{-1} \end{smallmatrix} \right).
\end{align*}
Thus taking a change of variables $n \mapsto (w_0\gamma_0\Lambda(h)^{-1}\gamma_0^{-1}w_0^{-1})^{-1}n(w_0\gamma_0\Lambda(h)^{-1}\gamma_0^{-1}w_0^{-1})$, we see that
\eqref{intertwine:comp} is 
\begin{align}
|\lambda(h)|^{2}M_{w_0}f_{1\chi_s}\left((w_0\gamma_0\Lambda(h)^{-1}\gamma_0^{-1}w_0^{-1})g\Lambda(h)\right).
\end{align}
Now
\begin{align}
\left(\begin{smallmatrix} \lambda(h) & & & & &\\ & 1 & & & &\\ & & \lambda(h)^{-1} & & &\\ & & & \lambda(h)^{-1} & & \\ & & & & 1 & \\ & & & & & \lambda(h) \end{smallmatrix} \right)\left(\begin{smallmatrix} \lambda(h)^{-1} & & & & &\\ & \lambda(h)^{-1} & & & &\\ & & 1 & & &\\ & & & 1 & & \\ & & & & 1 & \\ & & & & & \lambda(h)^{-1} \end{smallmatrix} \right)=\left(\begin{smallmatrix} 1 & & & & &\\ & \lambda(h)^{-1} & & & &\\ & & \lambda(h)^{-1} & & &\\ & & & \lambda(h)^{-1} & & \\ & & & & 1 & \\ & & & & & 1 \end{smallmatrix} \right).
\end{align}

Thus by \cite[Theorem 4.4]{Getz:Liu:BK}
\begin{align} \label{FTtilde}
|\lambda(h)|^{2}\mathcal{F}(f_1)\left(\left(\begin{smallmatrix} 1 & & & & &\\ & \lambda(h)^{-1} & & & &\\ & & \lambda(h)^{-1} & & &\\ & & & \lambda(h)^{-1} & & \\ & & & & 1 & \\ & & & & & 1 \end{smallmatrix} \right)g\Lambda(h)\right)=\mathcal{F}(\widetilde{f}_1)(g)\,.
\end{align}
Hence
\begin{align*}
\begin{split}
&\ I(\mathcal{F}(\widetilde{f}_1),L(h)f_2)\\
=&\ |\lambda(h)|^{2}\int_{N_0(F) \backslash G(F)} \mathcal{F}(f_1)\left(\left(\begin{smallmatrix} 1 & & & & &\\ & \lambda(h)^{-1} & & & &\\ & & \lambda(h)^{-1} & & &\\ & & & \lambda(h)^{-1} & & \\ & & & & 1 & \\ & & & & & 1 \end{smallmatrix} \right)\gamma_{0}g\Lambda(h)\right) \rho\left(g\right)L(h)f_2(v)dg\\
=&\ |\lambda(h)|^{2}\int_{N_0(F) \backslash G(F)} \mathcal{F}(f_1)\left(\gamma_0\left(\begin{smallmatrix} \lambda(h)^{-1}I_3 &\\ && I_3 \end{smallmatrix} \right)g\Lambda(h)\right) \rho\left(g\right)L(h)f_2(v)dg\\
=&\ \int_{N_0(F) \backslash G(F)} \mathcal{F}(f_1)\left(\gamma_0g\right) \rho\left(\left(\begin{smallmatrix} \lambda(h) & \\ & 1 \end{smallmatrix} \right)g\left(\begin{smallmatrix} 1 & \\ & \lambda(h)^{-1} \end{smallmatrix} \right)\right)L(h)f_2(v)dg\,.
\end{split}
\end{align*}
By Lemma \ref{lem:HK} this is equal to 
\begin{align*}\begin{split}
&\ \int_{N_0(F) \backslash G(F)} \mathcal{F}(f_1)\left(\gamma_0g\right) L(h)\rho\left(\left(\begin{smallmatrix} \lambda(h) & \\ & \lambda(h)^{-1} \end{smallmatrix} \right)g\right)f_2(v)dg\\
=&\ |\lambda(h)|^{\sum_{i=1}^3d_i/2}\chi_{\mathcal{Q}}(\lambda(h))\int_{N_0(F) \backslash G(F)} \mathcal{F}(f_1)\left(\gamma_0g\right) L(\lambda(h)^{-1}h)\rho\left(g\right)f_2(v)dg\,.
\end{split}
\end{align*}
By Lemma \ref{lem:char:triv} $\chi_{\mathcal{Q}}(\lambda(h))=1$ and this completes the proof.
\end{proof}

\section{The summation formula} \label{sec:summation}

  Our goal in this section is to state the main theorem of this paper, Theorem \ref{thm:main}, and prove it modulo some convergence statements {and a vanishing statement} that will be established in the remainder of the paper.  Theorem \ref{thm:main} was stated in the introduction as Theorem \ref{thm:intro}. 
  Before we do this we restate the Poisson summation formula obtained in \cite{Getz:Liu:BK} using the argument of Braverman and Kazhdan.  

In this section $F$ is a number field.  Let $K:=\prod_v K_v \leq \mathrm{Sp}_6(\A_F)$ be a maximal compact subgroup such that $K^{\infty}$ is $\mathrm{Sp}_{6}(\A_F^\infty)$-conjugate to $\mathrm{Sp}_{6}(\widehat{\OO})$.   We let
$$
\mathcal{S}_{BK}(X(\A_F),K) 
$$ 
be the restricted tensor product of the local spaces $\mathcal{S}_{BK}(X(F_v),K_v)$ with respect to the basic functions $b_v$ for $v \nmid \infty$ (see \eqref{b}).

For algebraic groups $Q$ over $F$ let $[Q]:=Q(F) \backslash Q(\A_F)$.
For $f \in \mathcal{S}_{BK}(X(\A_F),K)$, a Hecke character $\chi:[\GG_m] \lto \CC^\times$ and $s \in \CC$, let $\chi_s:=\chi|\cdot|^s$ where $|\cdot|$ is the idelic norm, and let
\begin{align}\label{fchis:global}
f_{\chi_s}(g):=\int_{ M^{\mathrm{ab}}(\A_F)}\delta_P(m)^{1/2}\chi_{s}\left( \omega( m) \right)f(m^{-1}g)\,
dm\,,
\end{align}
for all $g \in \mathrm{Sp}_6(\A_F)$.
We then form the Eisenstein series 
\begin{align}\label{Eisensteinseries}
E(g;f_{\chi_s}):=\sum_{\gamma \in P(F) \backslash \mathrm{Sp}_6(F)} f_{\chi_s}(\gamma g)\,.
\end{align}
By Langlands' general theory this Eisenstein series admits a meromorphic continuation to the plane.  The possible poles of $E(g;f_{\chi_s})$ were computed in \cite{Ikeda:poles:triple}.  The poles, if they exist, are simple.  The Eisenstein series is holomorphic if $\chi^2 \neq 1$.  If $\chi=1$ there are possible poles at $s=\pm 1, s =\pm 2$, and if $\chi \neq 1$ but $\chi^2=1$ there are possible poles at $s= \pm 1$.  

Let $\kappa_F:=\mathrm{Res}_{s=1}\zeta_F(s)$. The following is \cite[Theorem 6.7]{Getz:Liu:BK}:
\begin{thm} \label{thm:BK} Let $f \in \mathcal{S}_{BK}(X(\A_F),K)$.  For every $g \in \mathrm{Sp}_6(\A_F)$ one has
\begin{align*}
&\sum_{\gamma \in X(F)}f(\gamma g)+
\frac{1}{\kappa_F}\sum_{
i=1}^2\mathrm{Res}_{s=  i}E(g;\mathcal{F}(f)_{1_s})+\frac{1}{\kappa_F}\sum_{\substack{\chi \in \widehat{[\GG_m]} \\\chi \neq 1, \chi^2=1}}
\mathrm{Res}_{s=1 }E(g;\mathcal{F}(f)_{\chi_s})\\
& = \sum_{\gamma \in X(F)} \mathcal{F}(f)(\gamma g)+
\frac{1}{\kappa_F}\sum_{i=1}^2
\mathrm{Res}_{s=i }E(g;f_{1_s})+\frac{1}{\kappa_F}\sum_{\substack{\chi \in \widehat{[\GG_m]} \\\chi \neq 1, \chi^2=1}}
\mathrm{Res}_{s=1}E(g;f_{\chi_s})\,.
\end{align*}
All of the sums here are absolutely convergent.  \qed
\end{thm}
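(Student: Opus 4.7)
The plan is to prove the identity by Mellin inversion on the abelian quotient $M^{\mathrm{ab}}$ of the Levi, combined with the functional equation of the degenerate Siegel Eisenstein series, with the residue terms arising naturally when one shifts the contour of integration past the poles of $E(g; f_{\chi_s})$. The Fourier transform $\mathcal{F}$ on $\mathcal{S}_{BK}(X(\A_F),K)$ is designed precisely so that the standard intertwining operator $M_{w_0}$ sends $f_{\chi_s}$ to $\mathcal{F}(f)_{\chi^{-1}_{-s}}$ (up to the global normalizing factor implicit in the definition of $\mathcal{F}$), so the functional equation becomes $E(g; f_{\chi_s}) = E(g; \mathcal{F}(f)_{\chi^{-1}_{-s}})$, which is the mechanism that swaps $f$ and $\mathcal{F}(f)$ in the summation formula.

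First I would write
\[
\sum_{\gamma \in X(F)} f(\gamma g) \;=\; \sum_{\gamma \in P(F) \backslash \mathrm{Sp}_6(F)} \;\sum_{m \in M^{\mathrm{ab}}(F)} f(m^{-1} \gamma g),
\]
using the identification of $[P,P](F) \backslash P(F)$ with $M^{\mathrm{ab}}(F)$ from Lemma \ref{lem:bij} and the fact that $\omega$ identifies $M^{\mathrm{ab}}$ with $\GG_m$. Next, for fixed $\gamma$, I would apply Mellin inversion (i.e.\ Fourier inversion for the idele class group $F^\times \backslash \A_F^\times$) to the inner sum over $m$, writing
\[
\sum_{m \in M^{\mathrm{ab}}(F)} f(m^{-1}\gamma g) \;=\; \frac{1}{\kappa_F} \sum_{\chi \in \widehat{[\GG_m]}} \frac{1}{2\pi i}\int_{\mathrm{Re}(s) = \sigma} f_{\chi_s}(\gamma g)\,ds
\]
for $\sigma$ sufficiently large. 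Interchanging the sum over $\gamma \in P(F) \backslash \mathrm{Sp}_6(F)$ with the contour integral, which requires justification using the bounds of Lemma \ref{lem:bounded} on $f$ together with standard vertical strip estimates for $f_{\chi_s}$, turns the outer $\gamma$-sum into the Eisenstein series and yields
\[
\sum_{\gamma \in X(F)} f(\gamma g) \;=\; \frac{1}{\kappa_F} \sum_\chi \frac{1}{2\pi i}\int_{\mathrm{Re}(s) = \sigma} E(g; f_{\chi_s})\,ds.
\]

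I would then shift the contour from $\mathrm{Re}(s) = \sigma$ to $\mathrm{Re}(s) = -\sigma$, picking up the residues at $s = 1, 2$ (for $\chi = 1$) and at $s = 1$ (for $\chi \neq 1$, $\chi^2 = 1$) as described by Ikeda. After the shift, I would change variables $s \mapsto -s$ and $\chi \mapsto \chi^{-1}$, and apply the functional equation of the Eisenstein series in the form $E(g; f_{\chi^{-1}_{-s}}) = E(g; \mathcal{F}(f)_{\chi_s})$, which is where the definition of $\mathcal{F}$ via the normalized intertwining operator at $w_0$ (see the computation leading to \eqref{FTtilde}) is used. Finally, reversing the Mellin inversion for $\mathcal{F}(f)$ recognizes the resulting contour integral as $\sum_{\gamma \in X(F)} \mathcal{F}(f)(\gamma g)$, and comparing residue contributions from the two sides (those picked up going up from the $f$ side appear on the $\mathcal{F}(f)$ side of the identity after the functional equation) yields the stated formula.

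The main obstacle will be the convergence and analytic justifications: (i) showing the absolute convergence of $\sum_{\gamma \in X(F)} f(\gamma g)$, which reduces via the Iwasawa decomposition \eqref{Iwasawa:1}--\eqref{Iwasawa:2} and the Pl\"ucker bound \eqref{inj} to the estimate in Lemma \ref{lem:bounded}; (ii) justifying the interchange of the $\gamma$-summation and the contour integration, which requires uniform vertical-strip decay of $f_{\chi_s}$ combined with convergence of the Eisenstein series for $\mathrm{Re}(s)$ large; and (iii) controlling the horizontal contour shift, which requires knowing that $E(g; f_{\chi_s})$ is holomorphic outside the listed points, has at most simple poles there, and decays sufficiently as $|\mathrm{Im}(s)| \to \infty$ uniformly in vertical strips so that the shift picks up only residues and no contribution at infinity. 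These growth statements are exactly the content of the refined analysis in \cite{Getz:Liu:BK} that underlies the statement quoted here; in particular they force the use of the full excellent-section framework defining $\mathcal{S}_{BK}(X(\A_F),K)$ rather than any smaller convenient subspace.
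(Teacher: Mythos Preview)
The paper does not prove this theorem: it is quoted verbatim from \cite[Theorem 6.7]{Getz:Liu:BK} (note the \texttt{\textbackslash qed} and the sentence preceding the statement), so there is no ``paper's own proof'' to compare against here. Your outline is precisely the Braverman--Kazhdan contour-shift argument that the cited paper carries out, so in substance you are reconstructing the proof from \cite{Getz:Liu:BK} rather than diverging from it.

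One small point of precision: when you shift from $\mathrm{Re}(s)=\sigma$ to $\mathrm{Re}(s)=-\sigma$ you cross poles at $s\in\{\pm 1,\pm 2\}$ for $\chi=1$ and $s\in\{\pm 1\}$ for nontrivial quadratic $\chi$, not only the positive ones. The residues at the positive values give the $f$-terms on the right-hand side, while the residues at the negative values, after the functional equation, become the $\mathcal{F}(f)$-terms on the left-hand side; your final sentence indicates you understand this, but the sentence ``picking up the residues at $s=1,2$'' should be amended accordingly. Otherwise your identification of the three analytic obstacles (absolute convergence of $\sum_{\gamma}f(\gamma g)$, the interchange of sum and contour integral, and vertical-strip control for the shift) is exactly what forces the excellent-section framework in \cite{Getz:Liu:BK}.
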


In view of the theorem the following assumption
on a function $f \in \mathcal{S}_{BK}(X(\A_F),K)$ is natural:
\begin{align} \label{A:BK}\begin{split}
&\textrm{ One has }\mathrm{Res}_{s=1}E(g;f_{\chi_s})=0
\textrm{ when }\chi \textrm{ is a quadratic or trivial }\\&\textrm{ character in }\widehat{[\GG_m]} \textrm{ and }\mathrm{Res}_{s=2}E(g;f_{1_s})=0.\end{split}
\end{align}
We note that it is easy to find functions $f$ satisfying the assumption \eqref{A:BK}, see Theorem \ref{thm:A:BK} below.

\begin{cor} \label{cor:BK}
{Let $f \in \mathcal{S}_{BK}(X(\A_F),K)$.  Assume that $f$ and $\mathcal{F}(f)$ satisfy \eqref{A:BK}.}  Then for all $g \in \mathrm{Sp}_6(\A_F)$
\begin{align*}
&\sum_{\gamma \in X(F)}f(\gamma g)=\sum_{\gamma \in X(F)} \mathcal{F}(f)(\gamma g)\,.
\end{align*}
\qed
\end{cor}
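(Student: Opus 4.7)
The plan is to deduce the corollary as an immediate consequence of Theorem \ref{thm:BK}. First I would apply Theorem \ref{thm:BK} to the given function $f$, producing an identity whose two sides each consist of the main term $\sum_{\gamma \in X(F)} f(\gamma g)$ (respectively $\sum_{\gamma \in X(F)} \mathcal{F}(f)(\gamma g)$) plus a collection of residue terms of degenerate Siegel Eisenstein series. Inspecting the theorem, the residue terms on the left-hand side are built from $\mathcal{F}(f)$ (namely the residues at $s=1,2$ of $E(g;\mathcal{F}(f)_{1_s})$ and the residues at $s=1$ of $E(g;\mathcal{F}(f)_{\chi_s})$ for nontrivial quadratic $\chi$), while the residue terms on the right-hand side are the analogous expressions built from $f$.

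Next I would invoke the hypotheses. By assumption $\mathcal{F}(f)$ satisfies \eqref{A:BK}, which is precisely the statement that every residue term appearing on the left-hand side vanishes; similarly the assumption that $f$ satisfies \eqref{A:BK} kills every residue term on the right-hand side. Cancelling the (now zero) residue contributions on both sides of the identity in Theorem \ref{thm:BK} leaves exactly
\[
\sum_{\gamma \in X(F)} f(\gamma g) = \sum_{\gamma \in X(F)} \mathcal{F}(f)(\gamma g)\,,
\]
which is the desired Poisson summation formula.

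There is no real obstacle here: the argument is pure bookkeeping once Theorem \ref{thm:BK} and assumption \eqref{A:BK} are in hand. The only point that merits verification is that the list of possible poles recorded between the statement of Theorem \ref{thm:BK} and assumption \eqref{A:BK} (namely simple poles at $s=1,2$ for $\chi=1$ and at $s=1$ for nontrivial quadratic $\chi$) matches exactly the set of residues required to vanish in \eqref{A:BK}; this ensures that the hypothesis is strong enough to eliminate every residue term appearing in Theorem \ref{thm:BK}.
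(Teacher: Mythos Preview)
Your proposal is correct and matches the paper's own treatment: the corollary is stated with a \qed and no proof, since it follows immediately from Theorem~\ref{thm:BK} by observing that assumption~\eqref{A:BK} on $f$ kills the residue terms on the right-hand side and assumption~\eqref{A:BK} on $\mathcal{F}(f)$ kills those on the left.
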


Let $v$ be a place of $F$.  We will require the following  assumption on $f \in \mathcal{S}_{BK}(X(\A_F),K)$:
\begin{align} \label{A:vanish} 
 \textrm{ There is a place }v \textrm{ of }F \textrm{ such that  } f=f_vf^v \textrm{ and }{
 f_{v} \in C_c^\infty( \gamma_0G(F_{v}))}.
\end{align}
We will also require the following assumption on $f \in \mathcal{S}(V(\A_F))$:
\begin{align} \label{vanish}
\textrm{One has }\rho(g)f(\xi) = 0 \textrm{ for all }g \in \SL_2^3(\A_F), \xi \notin V'(F).
\end{align}
Here $V'$ is defined as in \eqref{Vprime}.
Using the fact that the Fourier transform $\mathcal{F}$ is an isomorphism \cite[Lemma 4.6]{Getz:Liu:BK} and that $K_v$-finite compactly supported functions on $X(F)$ are contained in $\mathcal{S}_{BK}(X(F_v),K_v)$ \cite[Proposition 4.7]{Getz:Liu:BK}, it is easy to find functions $f \in \mathcal{S}_{BK}(X(\A_F),K)$ such that both $f$ and $\mathcal{F}(f)$ satisfy \eqref{A:vanish}.
{In practice one can ensure \eqref{vanish} is valid as follows.  Let 
\begin{align} \label{Weyl}
    W \leq \SL_2^3(\ZZ)
\end{align}
be group of order $8$ generated by the three matrices that are $\left(\begin{smallmatrix} & 1\\ -1 & \end{smallmatrix}\right)$ in the $i$th factor and the identity in the other factors.  
 Then by the explicit description of the action of the Weil representation we see that \eqref{vanish} is implied by the following condition:
\begin{align}
    \label{vanish2} \textrm{There is a place }v\textrm{ of }F \textrm{ such that }f=f_vf^v\textrm{ and } \mathrm{sup}(\rho(w)f_v) \subseteq V'(F_v) \textrm{ for all }w \in W.
\end{align}
}

The main theorem of this paper is the following:

\begin{thm} \label{thm:main}
For 
$$
(f_1,f_2) \in \mathcal{S}_{BK}(X(\A_F),K) \times \mathcal{S}(V(\A_F))
$$ 
such that $f_1$, $\mathcal{F}(f_1)$ satisfy \eqref{A:vanish} and  $f_2$  satisfies \eqref{vanish}, one has
\begin{align*}
\sum_{\xi \in Y^{\mathrm{sm}}(F)} I(f_1,f_2)(\xi)=\sum_{\xi \in Y^{\mathrm{sm}}(F)} I(\mathcal{F}(f_1),f_2)(\xi)\,.
\end{align*} 
\end{thm}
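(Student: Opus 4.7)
The plan is to integrate the Poisson summation formula of Theorem~\ref{thm:BK} against the theta function $\Theta_{f_2}$ on $[G]$ and then unfold over $X(F)/G(F)$. First I would pair both sides of Theorem~\ref{thm:BK} with $\Theta_{f_2}(g)$ and integrate over $g\in[G]$. The Eisenstein-series residue contributions on both sides of that formula must be shown to vanish once paired with $\Theta_{f_2}$; this is exactly the vanishing statement proved in \S\ref{sec:appendix}, and it should use hypothesis \eqref{vanish} on $f_2$ in an essential way (morally, a theta function attached to an $f_2$ supported away from $V\setminus V'$ is orthogonal to the degenerate principal series appearing in the residues of the Siegel Eisenstein series). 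After this reduction, what remains is the absolutely convergent identity
\begin{align*}
\int_{[G]}\Theta_{f_2}(g)\sum_{\gamma\in X(F)}f_1(\gamma g)\,dg=\int_{[G]}\Theta_{f_2}(g)\sum_{\gamma\in X(F)}\mathcal{F}(f_1)(\gamma g)\,dg.
\end{align*}

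Next I would unfold each side using the orbit decomposition of $X(F)/G(F)$ given by Lemmas~\ref{lem:PSR:orbit}, \ref{lem:bij}, and \ref{lem:stab}. Writing each side as
\begin{align*}
\sum_{\delta\in\{\gamma_0,\gamma_1,\gamma_2,\gamma_3,I_3\}}\int_{G_\delta(F)\backslash G(\A_F)}f_1(\delta g)\,\Theta_{f_2}(g)\,dg,
\end{align*}
the hypothesis \eqref{A:vanish} kills all non-open-orbit terms. Indeed, the open $G$-orbit $U\subset X$ of $\gamma_0$ has closed complement, so over the place $v$ where $f_{1,v}$ is compactly supported in $\gamma_0 G(F_v)\subset U(F_v)$, the orbits $\gamma_i G(F_v)$ for $i=1,2,3$ and $I_3 G(F_v)$ are contained in $(X\setminus U)(F_v)$, and hence are disjoint from the support of $f_{1,v}$.

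For the remaining open-orbit term, $G_{\gamma_0}=N_0$ by Lemma~\ref{lem:stab}, and I would use the left $N_0(F)$-invariance of $f_1(\gamma_0 g)$ to refold
\begin{align*}
\int_{N_0(F)\backslash G(\A_F)}f_1(\gamma_0 g)\,\Theta_{f_2}(g)\,dg=\int_{N_0(\A_F)\backslash G(\A_F)}f_1(\gamma_0 g)\int_{[N_0]}\Theta_{f_2}(ng)\,dn\,dg.
\end{align*}
Using the explicit formula for the Weil representation on the upper-triangular unipotents in each $\SL_2$ factor (\S\ref{ssec:Weil}), together with the constraint $t_1+t_2+t_3=0$ defining $N_0$ in \eqref{N0}, the inner $[N_0]$-integral becomes a product of two Fourier integrals on $F\backslash\A_F$ of characters $t\mapsto\psi(t(\mathcal{Q}_i(\gamma_i)-\mathcal{Q}_3(\gamma_3)))$, and by orthogonality of characters it collapses $\sum_{\gamma\in V(F)}\rho(g)f_2(\gamma)$ to $\sum_{\gamma\in Y(F)}\rho(g)f_2(\gamma)$. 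Interchanging the remaining sum and integral yields $\sum_{\gamma\in Y(F)}I(f_1,f_2)(\gamma)$, and assumption \eqref{vanish} restricts this sum to $Y^{\mathrm{sm}}(F)$. The identical computation with $\mathcal{F}(f_1)$ in place of $f_1$ then produces the identity of the theorem.

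The main obstacle I anticipate is the justification of the various swaps of sum and integral carried out above. The theta function $\Theta_{f_2}$ is only of moderate growth on $[G]$ while $f_1$ decays via the Pl\"ucker norm bounds of Lemma~\ref{lem:bounded}, so one must verify the absolute convergence both of $\sum_{\xi\in Y^{\mathrm{sm}}(F)}|I(f_1,f_2)(\xi)|$ and of the double sum-integral on $[G]$. This requires sharp bounds on the local integrals $I(f_1,f_2)(\xi)$ at each place and a uniform bound after summing over $F$-points, which is exactly the content of \S\ref{sec:unr}, \S\ref{sec:bound:na}, \S\ref{sec:bound:arch}, and \S\ref{sec:AC}. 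A secondary, independent obstacle is the residue vanishing proved in \S\ref{sec:appendix}.
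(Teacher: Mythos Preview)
Your overall structure is correct and matches the paper: integrate the summation formula on $X(F)$ against $\Theta_{f_2}$ over $[G]$, unfold over $X(F)/G(F)$, kill the non-open orbits via \eqref{A:vanish}, and collapse the $[N_0]$-integral onto $Y(F)$ (then $Y^{\mathrm{sm}}(F)$ via \eqref{vanish}). The convergence obstacles you identify are also the right ones, handled by Proposition~\ref{prop:1cell}.

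However, you have misidentified what \S\ref{sec:appendix} does and how the boundary terms disappear. The Eisenstein residues are \emph{not} killed by pairing with $\Theta_{f_2}$, and condition \eqref{vanish} on $f_2$ plays no role in this step. Theorem~\ref{thm:A:BK} is a purely local statement about $f_1$: if $f_{1,v_0}$ lies in $C_c^\infty(X(F_{v_0}))$ for some non-Archimedean $v_0$, then the residues $\mathrm{Res}_{s=s_0}E(g,\mathcal{F}(f_1)_{\chi_s})$ vanish identically as functions of $g$. Since \eqref{A:vanish} forces $f_{1,v}\in C_c^\infty(\gamma_0 G(F_v))\subset C_c^\infty(X(F_v))$, applying Theorem~\ref{thm:A:BK} to $f_1$ gives \eqref{A:BK} for $\mathcal{F}(f_1)$; applying it to $\mathcal{F}(f_1)$ (which also satisfies \eqref{A:vanish} by hypothesis) gives \eqref{A:BK} for $f_1$. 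One then invokes Corollary~\ref{cor:BK} directly --- there are no residue terms left to integrate against $\Theta_{f_2}$ at all. In short: the paper removes the residues \emph{before} integrating against the theta function, using only the support condition on $f_1$, whereas you propose to remove them \emph{after} pairing, using \eqref{vanish} on $f_2$. The latter is not what is proved, and your ``morally, a theta function \ldots\ is orthogonal to the degenerate principal series'' heuristic is not the mechanism in play.
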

Here for $\xi \in Y^{\mathrm{sm}}(F)$,
\begin{align} \label{Is:global}
\begin{split}
I(f_1,f_2)\left(\xi \right)&=\int_{N_0(\A_F) \backslash G(\A_F)} f_1\left(\gamma_{0}g\right) \rho\left(g\right)f_2(\xi)\,dg\,.
\end{split}
\end{align}

We will prove the theorem in this section assuming the
absolute convergence statement given in Proposition \ref{prop:1cell} {and Theorem \ref{thm:A:BK}}.  We will indicate precisely when they are invoked.  After this section, the majority of the remainder of the paper is devoted to proving {Proposition \ref{prop:1cell}}.  

One has
\begin{align} \label{formal:comp}
\nonumber &\ \int_{G(F) \backslash G(\A_F)} \sum_{\gamma \in X(F)} f_1(\gamma g)\Theta_{f_2}(g)dg \\
\nonumber =& \ \sum_{\gamma \in X(F)/G(F)} \int_{G_{\gamma}(F) \backslash G(\A_F)} f_1(\gamma g)\Theta_{f_2}(g)dg\\
= &\ \sum_{\gamma_a} \int_{G_{\gamma_{a}}(\A_F) \backslash G(\A_F)}f_1(\gamma_{a}g)\int_{[G_{\gamma_{a}}]}\Theta_{f_2}(g_1g)dg_1
dg\,,
\end{align}
where the sum is over a set of representatives for 
$X(F)/G(F)$.  By assumption \eqref{A:vanish} only the contribution of $\gamma_a=\gamma_0$ is nonzero. 
  The stabilizer $G_{\gamma_0}$ is $N_0$ (see Lemma \ref{lem:stab} and \eqref{N0}).
Using part (2) in the definition of the Weil representation one has
\begin{align*}
\int_{[N_0]}
\sum_{\xi \in V(F)}
\rho(ng)f_2( \xi)dn=\sum_{\xi \in Y^{\mathrm{sm}}(F)}
\rho(g)f_2(\xi)\,.
\end{align*}
Here we have used assumption \eqref{vanish}.
It is permissible to switch the sum and integral here 
because $f_2$ is Schwartz.  

Thus 
\begin{align*}
& \ \int_{G_{\gamma_{0}}(\A_F) \backslash G(\A_F)} f_1(\gamma_{0}g)\int_{[G_{\gamma_{0}}]}\Theta_{f_2}(g_1g)dg_1dg\\
=&\ \int_{N_0(\A_F) \backslash G(\A_F)} f_1(\gamma_{0}g)\int_{[N_0]}
\sum_{\xi \in V(F)}
\rho(ng)f_2( \xi)dn dg\\
=&\ \int_{N_0(\A_F) \backslash G(\A_F)}f_1(\gamma_{0}g)\sum_{\xi \in Y^{\mathrm{sm}}(F)}
\rho(g)f_2(\xi) dg\\
=&\ \sum_{\xi \in Y^{\mathrm{sm}}(F)}I(f_1,f_2)(\xi)\,.
\end{align*}
These formal manipulations are justified by Proposition \ref{prop:1cell} and the Fubini-Tonelli Theorem.

We have shown that 
\begin{align*}
&\ \int_{G(F) \backslash G(\A_F)} \sum_{\gamma \in X(F)} f_1(\gamma g)\Theta_{f_2}(g)dg= \ \sum_{\xi \in Y^{\mathrm{sm}}(F)} I(f_1,f_2)(\xi)\,.
\end{align*}
{Since $f_1$ and $\mathcal{F}(f_1)$ satisfy \eqref{A:vanish} we deduce from Theorem \ref{thm:A:BK} that they both satisfy \eqref{A:BK}.  Thus by }Corollary \ref{cor:BK} the integral here is
\begin{align*}
\int_{G(F) \backslash G(\A_F)} \sum_{\gamma \in X(F)} \mathcal{F}(f_1)(\gamma g)\Theta_{f_2}(g)dg\,.
\end{align*}
Replacing $f_1$ by $\mathcal{F}(f_1)$ in the argument above we see that this is 
\begin{align*}
&\sum_{\xi \in Y^{\mathrm{sm}}(F)} I(\mathcal{F}(f_1),f_2)(\xi)\,.
\end{align*}
Thus assuming the absolute convergence statement in Proposition \ref{prop:1cell} {and Theorem \ref{thm:A:BK}} we have proven Theorem \ref{thm:main}. \qed

\begin{cor} 
\label{cor:main}  Let $h \in H(\A_F)$.  For 
$$
(f_1,f_2) \in \mathcal{S}_{BK}(X(\A_F),K) \times \mathcal{S}(V(\A_F))
$$ 
such that $f_1$, $\mathcal{F}(f_1)$ satisfy \eqref{A:vanish} and  $f_2$  satisfies \eqref{vanish}, one has
\begin{align*}
\sum_{\xi \in Y^{\mathrm{sm}}(F)} I(f_1,f_2)(h^{-1}\xi)=\sum_{\xi \in Y^{\mathrm{sm}}(F)} |\lambda(h)|^{\sum_{i=1}^3d_i/2-2} I(\mathcal{F}(f_1),f_2)(\lambda(h) h^{-1}\xi)\,.
\end{align*} 
\end{cor}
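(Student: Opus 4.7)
The plan is to apply Theorem \ref{thm:main} to the pair $(\widetilde{f}_1, L(h)f_2)$, where $\widetilde{f}_1(g) := f_1(\gamma_0\Lambda(h)^{-1}\gamma_0^{-1}g\Lambda(h))$ is the $h$-twist from Lemma \ref{lem:equiv}. Summing the first identity of that lemma, together with $L(h)I(f_1,f_2)(\xi) = I(f_1,f_2)(h^{-1}\xi)$, over $\xi \in Y^{\mathrm{sm}}(F)$ yields
\begin{equation*}
\sum_{\xi\in Y^{\mathrm{sm}}(F)} I(f_1,f_2)(h^{-1}\xi) = |\lambda(h)|^{-2}\sum_{\xi\in Y^{\mathrm{sm}}(F)} I(\widetilde{f}_1,L(h)f_2)(\xi).
\end{equation*}
Granting that Theorem \ref{thm:main} applies to $(\widetilde{f}_1, L(h)f_2)$, the right-hand side equals $|\lambda(h)|^{-2}\sum_\xi I(\mathcal{F}(\widetilde{f}_1),L(h)f_2)(\xi)$, which by the second identity of Lemma \ref{lem:equiv} equals $|\lambda(h)|^{\sum_{i=1}^3 d_i/2 - 2}\sum_\xi I(\mathcal{F}(f_1),f_2)(\lambda(h)h^{-1}\xi)$, the desired identity.

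The task therefore reduces to verifying the hypotheses of Theorem \ref{thm:main} for $(\widetilde{f}_1, L(h)f_2)$. For \eqref{vanish} applied to $L(h)f_2$: at the place $v$ where $f_2$ satisfies \eqref{vanish2}, Lemma \ref{lem:HK} writes $\rho(w)L(h_v)f_{2v}(\eta)$ as a nonzero scalar times $\rho(w_0)f_{2v}(\lambda(h_v)h_v^{-1}\eta)$ for $w \in W$, and since scaling and the $H$-action both preserve $V'(F_v)$, its support in $\eta$ lies in $V'(F_v)$. For \eqref{A:vanish} applied to $\widetilde{f}_1$ at the place $v$ where $f_{1v}\in C_c^\infty(\gamma_0 G(F_v))$: a direct matrix check shows that $\Lambda(h_v) = \mathrm{diag}(I_3,\lambda(h_v)I_3)$ normalizes the image of $G$ in $\mathrm{Sp}_6$ under \eqref{Gembed} (conjugation replaces $(a_i,b_i,c_i,d_i)$ by $(a_i,\lambda(h_v)^{-1}b_i,\lambda(h_v) c_i,d_i)$, which still has determinant one), while $\gamma_0\Lambda(h_v)^{-1}\gamma_0^{-1}$ (a diagonal element of $\mathrm{GSp}_6(F_v)$) normalizes $[P,P]$; combining these shows the support of $\widetilde{f}_{1v}$ in $X(F_v)$ is $\gamma_0 G(F_v)$, with compactness and smoothness preserved under translation.

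The main obstacle is verifying \eqref{A:vanish} for $\mathcal{F}(\widetilde{f}_1)$. By formula \eqref{FTtilde} of Lemma \ref{lem:equiv}, $\mathcal{F}(\widetilde{f}_1)_v(g) = |\lambda(h_v)|^2 \mathcal{F}(f_1)_v(D g \Lambda(h_v))$ with $D = \mathrm{diag}(1,\lambda(h_v)^{-1},\lambda(h_v)^{-1},\lambda(h_v)^{-1},1,1)$, so at the place $v$ where $\mathcal{F}(f_1)_v \in C_c^\infty(\gamma_0 G(F_v))$ the support condition becomes $g \in [P,P](F_v)D^{-1}\gamma_0\Lambda(h_v)^{-1}G(F_v)$, using that $D$ and $\Lambda(h_v)$ normalize $[P,P]$ and $G$ respectively. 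Applying the commutation relation $\gamma_0\left(\begin{smallmatrix} xI_3 & \\ & x^{-1}I_3\end{smallmatrix}\right)\gamma_0^{-1} = \mathrm{diag}(x^{-1},x,x,x,x^{-1},x^{-1})$ from the proof of Lemma \ref{lem:bij} with $x = \lambda(h_v)$ yields $D^{-1}\gamma_0\Lambda(h_v)^{-1} = \gamma_0\left(\begin{smallmatrix} \lambda(h_v)I_3 & \\ & \lambda(h_v)^{-1}I_3\end{smallmatrix}\right)$; the key observation is that this torus element lies in $T_0(F_v) \subset G(F_v)$ via \eqref{Gembed}, so the support reduces to $\gamma_0 G(F_v)$ in $X(F_v)$. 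At almost every place $\Lambda(h_v)\in\mathrm{GSp}_6(\OO_v)$ has unit similitude, so $\widetilde{f}_{1v}$ coincides with the basic function and the twisted maximal compact agrees with the original; hence $\widetilde{f}_1$ is a bona fide element of $\mathcal{S}_{BK}(X(\A_F),K')$ for an admissible maximal compact $K'$, and Theorem \ref{thm:main} applies to complete the proof.
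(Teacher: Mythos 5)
Your proposal takes essentially the same route as the paper. Both reduce Corollary \ref{cor:main} to Theorem \ref{thm:main} via Lemma \ref{lem:equiv}, and the matrix computations you carry out for \eqref{A:vanish} of $\widetilde{f}_1$ and $\mathcal{F}(\widetilde{f}_1)$ are the same as the paper's: that $\Lambda(h_v)$ normalizes the image of $G$, that $\gamma_0\Lambda(h_v)^{-1}\gamma_0^{-1}$ normalizes $[P,P]$, and the identity $D^{-1}\gamma_0\Lambda(h_v)^{-1} = \gamma_0\left(\begin{smallmatrix}\lambda(h_v)I_3 & \\ & \lambda(h_v)^{-1}I_3\end{smallmatrix}\right)$ with the right-hand torus element lying in $T_0(F_v) \subset G(F_v)$.

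Where you depart from the paper is in also attempting to verify \eqref{vanish} for $L(h)f_2$ --- the paper silently skips this step, stating only that ``it suffices to check'' the \eqref{A:vanish} conditions for $\widetilde{f}_1$ and $\mathcal{F}(\widetilde{f}_1)$. You were right to worry about it, since applying Theorem \ref{thm:main} to $(\widetilde{f}_1, L(h)f_2)$ genuinely requires $L(h)f_2$ to satisfy \eqref{vanish}. But note that your argument runs through \eqref{vanish2}, which is a stronger, purely local sufficient condition that the corollary does \emph{not} assume: only \eqref{vanish} is in the hypotheses. Working from \eqref{vanish} alone is delicate, because for $h \in H(\A_F)$ and $\xi \in V(F) \setminus V'(F)$ the point $h^{-1}\xi$ typically lies in $V(\A_F) \setminus V(F)$, so the literal rational-point statement in \eqref{vanish} does not directly apply to $\rho(g')f_2(h^{-1}\xi)$. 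In effect one must either read \eqref{vanish} as an adelic support condition (vanishing on the closed locus of $V(\A_F)$ where at least two components are zero, which the $H$-action preserves) or strengthen the hypothesis to \eqref{vanish2} as you implicitly did; you should flag this rather than presenting \eqref{vanish2} as if it were given. (Minor: the ``$\rho(w_0)$'' in that paragraph should just be $\rho(w)$.) Aside from this one point, your reasoning and the paper's coincide.
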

\begin{proof}
In view of Theorem \ref{thm:main} and Lemma \ref{lem:equiv} it suffices to check that if $f_1$ and $\mathcal{F}(f_1)$ satisfy \eqref{A:vanish} then $\widetilde{f}_1$ and $\mathcal{F}(\widetilde{f}_1)$ satisfy \eqref{A:vanish}, where 
$$
\widetilde{f}_1(g):=f_1(\gamma_0\Lambda(h)^{-1}\gamma_0^{-1}g \Lambda(h))\,.
$$
(see \eqref{Lambdah} for the definition of $\Lambda(h)$).  We recall from \eqref{FTtilde} that 
$$
|\lambda(h)|^{2}\mathcal{F}(f_1)\left(\left(\begin{smallmatrix} 1 & & & & &\\ & \lambda(h)^{-1} & & & &\\ & & \lambda(h)^{-1} & & &\\ & & & \lambda(h)^{-1} & & \\ & & & & 1 & \\ & & & & & 1 \end{smallmatrix} \right)g\Lambda(h)\right)=\mathcal{F}(\widetilde{f}_1)(g)\,.
$$
 Since $\Lambda(h_v)$ normalizes $G(F_v)$ for all $v$ it follows that if $f_1$ satisfies \eqref{A:vanish} then so does $\widetilde{f}_1$.  
Since {
$$
\left(\begin{smallmatrix} 1 & & & & &\\ & \lambda(h_v) & & & &\\ & & \lambda(h_v) & & &\\ & & & \lambda(h_v) & & \\ & & & & 1 & \\ & & & & & 1 \end{smallmatrix} \right)\gamma_0=\gamma_0\left(\begin{smallmatrix} \lambda(h_v)I_3 & \\ & I_3\end{smallmatrix} \right), \textrm{ and }\left(\begin{smallmatrix} \lambda(h_v)I_3 & \\ & I_3\end{smallmatrix} \right)G(F_v)\Lambda(h_v)^{-1} =G(F_v)
$$}
 for all $v$, if $\mathcal{F}(f_1)$ satisfies \eqref{A:vanish} then so does $\mathcal{F}(\widetilde{f}_1)$.
\end{proof}

\section{The unramified calculation} \label{sec:unr} 
  
For this section $F$ is a local field of residual characteristic $p$ with ring of integers $\OO$ that is unramified over $\QQ_p$.  We let $\psi:F \to \CC^\times$ be an unramified nontrivial character and we assume that $\chi_{\mathcal{Q}}$ is unramified.
 To ease notation let 
$$
K:=\mathrm{Sp}_6(\OO)\,.
$$
  For $c \in X_*(M/M^{\mathrm{der}})=\ZZ$ let
\begin{align}\label{onec}
\one_c:=\one_{[P,P](F)c(\varpi)K}
\end{align}
(see \eqref{c:def}).

The following is a consequence of the Iwasawa decomposition:
\begin{lem}
The functions $\one_{c}$, $c \in \ZZ$, form a basis of $C_c^\infty(X(F)/K)$ as a $\CC$-vector space.   \qed
\end{lem}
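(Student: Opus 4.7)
The plan is to verify three properties in sequence: (a) each $\one_c$ lies in $C_c^\infty(X(F)/K)$, (b) the family $\{\one_c\}_{c\in\ZZ}$ is linearly independent, and (c) it spans.

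For (a), I would read off from the Iwasawa decomposition \eqref{Iwasawa:1} that each cell $[P,P](F)c(\varpi)K$ is right $K$-invariant, so $\one_c$ is right $K$-invariant. The cell is open in $X(F)$ because $K$ is open in $\mathrm{Sp}_6(F)$ and the natural projection $\mathrm{Sp}_6(F)\to X(F)$ is open; hence $\one_c$ is locally constant. Moreover the cell is the image of the compact set $c(\varpi)K\subset\mathrm{Sp}_6(F)$ under this continuous projection, so it is compact in $X(F)$, giving compact support.

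Linear independence in (b) is immediate from \eqref{Iwasawa:1}: the supports of the $\one_c$ are pairwise disjoint. For spanning in (c), I would let $f\in C_c^\infty(X(F)/K)$, pull it back to a function on $\mathrm{Sp}_6(F)$ that is simultaneously left $[P,P](F)$-invariant and right $K$-invariant, and observe that such a function is constant on each double coset $[P,P](F)c(\varpi)K$. Since these cells are open, compact, and pairwise disjoint, they are clopen, so any compact subset of $X(F)$ intersects only finitely many of them. Applying this to $\mathrm{supp}(f)$ yields
\[
f=\sum_{c\in\ZZ} f(c(\varpi))\,\one_c
\]
as a finite sum, which is the desired expansion.

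The only step that requires any attention is the compactness of each individual cell in $X(F)$, and this reduces at once to the compactness of $K$ together with continuity of the quotient map. Once \eqref{Iwasawa:1} is in hand, no step presents a genuine obstacle.
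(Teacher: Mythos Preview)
Your proof is correct and follows precisely the approach the paper indicates: the paper states the lemma with a \qed and no proof beyond the remark that it ``is a consequence of the Iwasawa decomposition,'' and your argument simply unpacks that consequence in full detail. There is nothing to add.
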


In view of the injection \eqref{inj} we have the following lemma:
\begin{lem} \label{lem:Iwasawa} 
One has
$$
\one_{c}(g) \neq 0
$$
if and only if $|g|=q^{-c}$. \qed
\end{lem}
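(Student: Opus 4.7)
The plan is to reduce the lemma to a direct Plücker computation combined with facts already established in the excerpt.

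First, by definition $\one_c(g) \neq 0$ if and only if $g \in [P,P](F)\,c(\varpi)\,K$. By the Iwasawa decomposition \eqref{Iwasawa:1}, every $g \in X(F)$ lies in a unique cell $[P,P](F)\,c'(\varpi)\,K$ for some $c' \in \ZZ$, and by the injection \eqref{inj} distinct cells correspond to distinct values of $|g|$. Consequently, it is enough to verify that $|g| = q^{-c}$ on the entire cell $[P,P](F)\,c(\varpi)\,K$; equivalently, that $|c(\varpi)| = q^{-c}$ and that $|\cdot|$ is bi-invariant under $[P,P](F)$ on the left and $K$ on the right.

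Next I would compute $|c(\varpi)|$ directly from \eqref{c:def} and the definition of $\mathrm{Pl}$ in \eqref{Pl}. Writing $c(\varpi) = \bigl(\begin{smallmatrix} A \\ B\end{smallmatrix}\bigr)$, the bottom block is $B = \mathrm{diag}(\varpi^{c},1,1)$ placed in the last three columns, so
\[
\mathrm{Pl}(c(\varpi)) \;=\; (\varpi^{c} e_4) \wedge e_5 \wedge e_6 \;=\; \varpi^{c}\,e_{4,5,6},
\]
whence $|c(\varpi)| = q^{-c}$ by the definition of the norm on $\wedge^3 F^6$.

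It remains to show bi-invariance. Right $K$-invariance is immediate from the fact recalled after \eqref{norm:def}, since the right action of $K = \mathrm{Sp}_6(\OO)$ on $\wedge^3 F^6$ factors through $\GL(\wedge^3 \OO^6)$. For left $[P,P](F)$-invariance, let $p = \bigl(\begin{smallmatrix} A & AZ \\ & {}^tA^{-1}\end{smallmatrix}\bigr) \in [P,P](F)$ with $A \in \SL_3(F)$ and ${}^tZ = Z$. If $g = \bigl(\begin{smallmatrix} A_g \\ B_g\end{smallmatrix}\bigr)$, then the bottom three rows of $pg$ are the rows of ${}^tA^{-1} B_g$, so their wedge product equals $\det({}^tA^{-1}) \, \mathrm{Pl}(g) = \mathrm{Pl}(g)$ because $A \in \SL_3(F)$. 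Hence $|pg| = |g|$.

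Combining these three facts, the cell $[P,P](F)\,c(\varpi)\,K$ consists entirely of elements with $|g| = q^{-c}$, and by the injectivity of \eqref{inj} no other cell does. No step here is a serious obstacle; the only thing to be careful about is the $\SL_3$ condition (rather than $\GL_3$) on the Levi of $[P,P]$, which is exactly what makes the Plücker vector left-invariant rather than merely left-equivariant up to a determinant.
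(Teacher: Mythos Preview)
Your proposal is correct and follows exactly the approach the paper indicates: the paper gives no proof beyond the remark ``In view of the injection \eqref{inj},'' and you have simply spelled out the details behind that remark, namely the direct computation $|c(\varpi)|=q^{-c}$ together with the $[P,P](F)$--$K$ bi-invariance of $|\cdot|$. The bi-invariance verification is in fact already contained in \cite[Proposition 2.3]{Getz:Liu:BK} (which is what \eqref{inj} cites), so that part of your argument is redundant but not wrong.
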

We recall that the basic function, by definition \eqref{b}, is
$$
b:=\sum_{j,k=0}^\infty q^{2j}\one_{k+2j}\,.
$$

In this section we compute the function
$I(b,\one_{V(\OO)})(v)$
and then give bounds on it.  Technically speaking the bounds should be proven first to ensure the absolute convergence of the integrals with which we are working.  However we feel that giving the formal computation first and then proving absolute convergence makes the argument easier to follow.  

Let
\begin{align}\label{Q}
\mathcal{Q}(v):=\sum_{i=1}^3\mathcal{Q}_i(v_i)\,.
\end{align}
{We assume that $\rho(k)\one_{V(\OO)}=\one_{V(\OO)}$ for all $k \in \SL_2^3(\OO)$. If we begin with global objects this will be true for the corresponding local objects at almost all places.}

\begin{prop} \label{prop:I0comp} Assume that $v \in Y^{\mathrm{sm}}(F)$.
The integral $I(b,\one_{V(\OO)})(v)$ is equal to
\begin{align*}
&\sum_{j=0}^\infty \int\one_{\OO}\left(\frac{\mathcal{Q}(v)}{\varpi^{4j}a_1a_2a_3}
\right)
\one_{V(\OO)}\left(\frac{v}{\varpi^{2j}a}\right)\bar{\chi}_\mathcal{Q}(\varpi^{2j}a)\prod_{i=1}^3\left(\frac{|a_i|}{q^{2j}}\right)^{1-d_i/2}d^\times a\,,
\end{align*}
where the integral is over the set of  $a \in (\OO \cap F^\times)^3$ such that 
\begin{align*}
\max(|a_1^{-1}a_2a_3|,|a_2^{-1}a_1a_3|,|a_3^{-1}a_1a_2|)\leq 1 \,.
\end{align*}
In particular $I(b,\one_{V(\OO)})$ is supported in $V(\OO)$.
\end{prop}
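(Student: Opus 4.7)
I would unfold $I(b,\one_{V(\OO)})(v)$ by combining the Iwasawa decomposition $G(F)=N_G(F)A_G(F)K_G$ (with $K_G=\SL_2(\OO)^3$, $A_G$ the split diagonal torus with entries $(a_1,a_2,a_3)$, and $N_G$ the upper unipotents parametrized by $(t_1,t_2,t_3)\in F^3$) with the explicit formulas for the Weil representation.  Since $b$ is right $K$-invariant, $\rho(k)\one_{V(\OO)}=\one_{V(\OO)}$ for $k\in K_G$ by hypothesis, and $N_0\subset N_G$ is cut out by $t:=t_1+t_2+t_3=0$, the integral collapses to one over $A_G(F)\times F$ (with $t$ parametrizing $N_0\backslash N_G$) against the Iwasawa Jacobian $\prod_i|a_i|^{-2}$.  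The formulas of \S\ref{ssec:Weil} give $\rho(n_ta)\one_{V(\OO)}(v)=\psi(tQ_0(v))\chi_{\mathcal{Q}}(a)\prod_i|a_i|^{d_i/2}\one_{V(\OO)}(av)$, where $Q_0(v):=\mathcal{Q}_i(v_i)$ is the common value on $Y^{\mathrm{sm}}(F)$ (so $\mathcal{Q}(v)=3Q_0(v)$, a distinction that is absorbed into the $\OO$-indicator below).

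Second, I would compute $|\gamma_0 n_ta|$ from the Plücker embedding \eqref{Pl}.  The bottom three rows of $\gamma_0 n_ta$ can be read off directly from the explicit matrix $\gamma_0$ and the block structure of $n_ta\in G\subset\mathrm{Sp}_6$, and wedging them yields Plücker coordinates equal to $\pm a_i^{-1}$, $\pm a_i/(a_ja_k)$, and $t/(a_1a_2a_3)$ as $(i,j,k)$ runs through permutations of $(1,2,3)$; only the last depends on $t$.  Substituting $b=\sum_{j,k\ge 0}q^{2j}\one_{k+2j}$ from \eqref{b} and invoking Lemma \ref{lem:Iwasawa}, the inner sum collapses into the indicator $\one_{|\gamma_0 n_ta|\le q^{-2j}}$.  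Formally exchanging $\sum_j$ with the integral, the $t$-integration reduces to the Fourier-type integral
\[
\int_{|t|\le q^{-2j}|a_1a_2a_3|}\psi\bigl(tQ_0(v)\bigr)\,dt=|\varpi^{2j}a_1a_2a_3|\,\one_{\OO}\bigl(\varpi^{2j}a_1a_2a_3\,Q_0(v)\bigr),
\]
since $\psi$ is unramified.

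Third, the residual constraints on $a$, namely $|a_i|\ge q^{2j}$ and $|a_i/(a_ja_k)|\le q^{-2j}$, are transformed by the $d^\times a$-preserving substitution $a_i\mapsto\varpi^{-2j}\alpha_i^{-1}$ into $\alpha_i\in\OO\cap F^\times$ together with $|\alpha_i^{-1}\alpha_j\alpha_k|\le 1$ for the three permutations, which is exactly the domain appearing in the statement.  Under this substitution the accumulated scalars (the Iwasawa Jacobian $\prod_i|a_i|^{-2}$, the Weil factor $\chi_{\mathcal{Q}}(a)\prod_i|a_i|^{d_i/2}$, the $q^{2j}$ from $b$, and the $|\varpi^{2j}a_1a_2a_3|$ from the $t$-integration) combine to $\bar\chi_{\mathcal{Q}}(\varpi^{2j}\alpha)\prod_i(|\alpha_i|/q^{2j})^{1-d_i/2}$, and the indicator $\one_{\OO}(\varpi^{2j}a_1a_2a_3 Q_0(v))$ becomes $\one_{\OO}(\mathcal{Q}(v)/(\varpi^{4j}\alpha_1\alpha_2\alpha_3))$, yielding the claimed formula.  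The support statement is then immediate, as $\one_{V(\OO)}(v/(\varpi^{2j}\alpha))$ with $\varpi^{2j}\alpha\in\OO^3$ forces $v\in V(\OO)$.  The main obstacle in executing this plan is the careful bookkeeping of modular, Weil, and measure factors under the inversion-substitution $a\mapsto a^{-1}$; the formal exchange of $\sum_j$ with the integral will be justified a posteriori by the convergence bounds to be established in \S\ref{sec:bound:na}, as the authors themselves flag.
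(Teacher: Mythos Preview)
Your approach is essentially the paper's: Iwasawa decomposition, explicit Weil and Pl\"ucker computations, the $t$-integration as a Gauss-type integral, and a torus change of variables. The only difference is cosmetic---you collapse $\sum_{k\ge 0}\one_{k+2j}$ to the single indicator $\one_{|\gamma_0 n_t a|\le q^{-2j}}$ at the outset, whereas the paper keeps the equality constraint $|\,\cdot\,|=q^{-k-2j}$, rewrites it after the $t$-integration as a difference of two $\leq$-constraints, and then observes that the sum over $k$ telescopes to the $k=0$ term after the substitutions $a\mapsto\varpi^{k+2j}a$ and $a\mapsto\varpi^{k+2j+1}a$; your reorganization simply performs this telescoping first.
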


\begin{proof}

Let
{
\begin{align} \label{T}
T \leq G
\end{align}}be the maximal torus of diagonal matrices.  We use the Iwasawa decomposition to write
$$
dg=\frac{dn_0dndadk}{\delta_{P \cap G}(a)}\,,
$$
where $dn_0$, $dn$, $da$ and $dk$ are Haar measures on $N_0(F)$, $\{\left(\begin{smallmatrix} 1 & x \\ & 1 \end{smallmatrix} \right): x \in F\}$, $T(F)$ and $K$, respectively.  We assume that $K$ and its intersections with the other subgroups here have measure $1$.
Then we obtain
\begin{align} \label{Ib}
&\ I(b,\one_{V(\OO)})\left(v \right)\\ \nonumber = & \ \int_{N_0(F) \backslash G(F)} b\left(\gamma_{0}g\right) \rho\left(g\right)\one_{V(\OO)}(v)dg\\
= & \ \int b\left(\gamma_{0}\left(\left(\begin{smallmatrix} 1 & t\\ & 1 \end{smallmatrix} \right)\left(\begin{smallmatrix} a_1^{-1} & \\ & a_1 \end{smallmatrix} \right),\left(\begin{smallmatrix} 1 & t \\ & 1 \end{smallmatrix} \right)\left(\begin{smallmatrix} a_2^{-1} & \\ & a_2 \end{smallmatrix} \right),\left(\begin{smallmatrix} 1 & t \\ & 1 \end{smallmatrix} \right)\left(\begin{smallmatrix} a_3^{-1} & \\ & a_3 \end{smallmatrix} \right)\right)\right)\nonumber \\
\times & \ \rho\left(\left(\left(\begin{smallmatrix} 1 & t\\ & 1 \end{smallmatrix} \right)\left(\begin{smallmatrix} a_1^{-1} & \\ & a_1 \end{smallmatrix} \right),\left(\begin{smallmatrix} 1 & t \\ & 1 \end{smallmatrix} \right)\left(\begin{smallmatrix} a_2^{-1} & \\ & a_2 \end{smallmatrix} \right),\left(\begin{smallmatrix} 1 & t \\ & 1 \end{smallmatrix} \right)\left(\begin{smallmatrix} a_3^{-1} & \\ & a_3 \end{smallmatrix} \right)\right)\right)\one_{V(\OO)}(v)dt\prod_{i=1}^3|a_i|^2d^\times a_i\,,  \nonumber
\end{align}
where the integral is over $ F \times F^{\times 3}$.    Now
\begin{align*}
& \ \rho\left(\left(\left(\begin{smallmatrix} 1 & t\\ & 1 \end{smallmatrix} \right)\left(\begin{smallmatrix} a_1^{-1} & \\ & a_1 \end{smallmatrix} \right),\left(\begin{smallmatrix} 1 & t \\ & 1 \end{smallmatrix} \right)\left(\begin{smallmatrix} a_2^{-1} & \\ & a_2 \end{smallmatrix} \right),\left(\begin{smallmatrix} 1 & t \\ & 1 \end{smallmatrix} \right)\left(\begin{smallmatrix} a_3^{-1} & \\ & a_3 \end{smallmatrix} \right)\right)\right)\one_{V(\OO)}(v)\\
& \ = \psi(t\mathcal{Q}(v))\one_{V(\OO)}(a^{-1}v)\bar{\chi}_{\mathcal{Q}}(a)\prod_{i=1}^3|a_i|^{-d_i/2}\,.
\end{align*}
Thus \eqref{Ib} is equal to 
\begin{align} \label{before:val:lem}
\begin{split}
& \ \int b\left(\gamma_{0}\left(\left(\begin{smallmatrix} 1 & t\\ & 1 \end{smallmatrix} \right)\left(\begin{smallmatrix} a_1^{-1} & \\ & a_1 \end{smallmatrix} \right),\left(\begin{smallmatrix} 1 & t \\ & 1 \end{smallmatrix} \right)\left(\begin{smallmatrix} a_2^{-1} & \\ & a_2 \end{smallmatrix} \right),\left(\begin{smallmatrix} 1 & t \\ & 1 \end{smallmatrix} \right)\left(\begin{smallmatrix} a_3^{-1} & \\ & a_3 \end{smallmatrix} \right)\right)\right) \\
& \ \times \psi\left(t\mathcal{Q}(v)\right)\one_{V(\OO)}(a^{-1}v)\overline{\chi}_\mathcal{Q}(a)dt \prod_{i=1}^3|a_i|^{2-d_i/2}d^\times a_i\\
& \ = \int \sum_{k,j=0}^\infty q^{2j}
\one_{k+2j}\left(\gamma_{0}\left(\left(\begin{smallmatrix} 1 & t\\ & 1 \end{smallmatrix} \right)\left(\begin{smallmatrix} a_1^{-1} & \\ & a_1 \end{smallmatrix} \right),\left(\begin{smallmatrix} 1 & t \\ & 1 \end{smallmatrix} \right)\left(\begin{smallmatrix} a_2^{-1} & \\ & a_2 \end{smallmatrix} \right),\left(\begin{smallmatrix} 1 & t \\ & 1 \end{smallmatrix} \right)\left(\begin{smallmatrix} a_3^{-1} & \\ & a_3 \end{smallmatrix} \right)\right)\right) \\
& \ \times \psi\left(t\mathcal{Q}(v)\right)\one_{V(\OO)}(a^{-1}v)\overline{\chi}_\mathcal{Q}(a)dt \prod_{i=1}^3|a_i|^{2-d_i/2}d^\times a_i\,. \end{split}
\end{align}

Recall that 
$$
\gamma_0=\gamma_{0,0,0}=\begin{pmatrix} * \\ \begin{matrix} 
1 & 1 & 1 & 0 & 0 & 0\\ 0 & 0 & 0 &-1 & 1 &  0\\
0 & 0 & 0 & -1 & 0 & 1  \end{matrix}
\end{pmatrix},$$
hence
$$
\gamma_{0}\left(\left(\begin{smallmatrix} a_1^{-1} & a_1t\\ & a_1 \end{smallmatrix} \right),\left(\begin{smallmatrix} a_2^{-1} & a_2t \\ & a_2 \end{smallmatrix} \right),\left(\begin{smallmatrix} a_3^{-1} & a_3t\\ & a_3 \end{smallmatrix} \right)\right) = \begin{pmatrix}
* \\ \begin{matrix}
a_1^{-1} &a_2^{-1} &a_3^{-1} &ta_1&ta_2&ta_3\\
0&0&0&-a_1&a_2&0\\
0&0&0&-a_1&0&a_3\end{matrix}
\end{pmatrix}.
$$
Thus by Lemma \ref{lem:Iwasawa} we have that \eqref{before:val:lem} is equal to 
\begin{align} \label{adone}
&\sum_{k=0}^\infty\sum_{j=0}^\infty q^{2j}\int
 \psi\left(t\mathcal{Q}(v)\right)\one_{V(\OO)}(a^{-1}v)\overline{\chi}_\mathcal{Q}(a)\prod_{i=1}^3|a_i|^{2-d_i/2}d^\times a dt\,,
\end{align}
where the integral is over $a,t$ such that 
\begin{align*}
q^{-k-2j}=\max(|ta_1a_2a_3|,|a_1|,|a_2|,|a_3|,|a_1^{-1}a_2a_3|,|a_2^{-1}a_1a_3|,|a_3^{-1}a_1a_2|)\,.
\end{align*}
Note that
\begin{align*}
&\int_{|t| \leq q^{-k-2j}|a_1a_2a_3|^{-1}}
 \psi\left(t\mathcal{Q}(v)\right)dt=q^{-k-2j}|a_1a_2a_3|^{-1}\one_{\OO}\left(\frac{\mathcal{Q}(v)\varpi^{k+2j}}{a_1a_2a_3}\right).
\end{align*}
Using this fact we can simplify the $t$ integral in  \eqref{adone} to see that 
\begin{align*}
I(b,\one_{V(\OO)})(v)=& \ \sum_{k,j=0}^\infty q^{-k}\int\one_{\OO}\left(\frac{\mathcal{Q}(v)\varpi^{k+2j}}{a_1a_2a_3}\right)
\one_{V(\OO)}(a^{-1}v)\bar{\chi}_\mathcal{Q}(a)\prod_{i=1}^3|a_i|^{1-d_i/2}d^\times a \\
- & \ \sum_{k,j=0}^\infty q^{-k-1}\int\one_{\OO}\left(\frac{\mathcal{Q}(v)\varpi^{k+2j+1}}{a_1a_2a_3}\right)
\one_{V(\OO)}(a^{-1}v)\bar{\chi}_\mathcal{Q}(a)\prod_{i=1}^3|a_i|^{1-d_i/2}d^\times a\,,
\end{align*}
where the first integral is over the set of $a \in (F^\times)^3$ such that 
\begin{align*}
q^{-k-2j} \geq \max(|a_1|,|a_2|,|a_3|,|a_1^{-1}a_2a_3|,|a_2^{-1}a_1a_3|,|a_3^{-1}a_1a_2|)\,,
\end{align*}
and the second integral is over the set of $a \in (F^{\times})^3$ such that 
\begin{align*}
q^{-k-2j-1}\geq \max(|a_1|,|a_2|,|a_3|,|a_1^{-1}a_2a_3|,|a_2^{-1}a_1a_3|,|a_3^{-1}a_1a_2|)\,.
\end{align*}
If we then take a change of variables $(a_1,a_2,a_3) \mapsto \varpi^{k+2j}(a_1,a_2,a_3)$ to the first integral and $(a_1,a_2,a_3) \mapsto \varpi^{k+2j+1}(a_1,a_2,a_3)$ in the second integral we obtain the expression in the statement of the proposition.
\end{proof}

For the purpose of proving Proposition \ref{prop:1cell} we also require a bound on a related integral:

\begin{lem} \label{lem:abs:open}  Assume that $v \in V'(F)$. 
One has
\begin{align*}
    &\int_{N_0(F) \backslash G(F)}| b\left(\gamma_{0}g\right) \rho\left(g\right)\one_{V(\OO)}(v)|dg \\&\leq\begin{cases}  \prod_{i=1}^3(\mathrm{ord}(v_i)+1)^3|v_i|^{1-d_i/2} \one_{V(\OO)}(v) & \textrm{ if no }v_i=0\,,\\
\prod_{i=2}^3(\mathrm{ord}(v_i)+1)^4|v_i|^{2-d_i/2-d_1/2} \one_{V(\OO)}(v) &\textrm{ if }v_1=0\,.\end{cases}
\end{align*}
\end{lem}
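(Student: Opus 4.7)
The plan is to follow the proof of Proposition \ref{prop:I0comp} in absolute value.  The key observation is that $|\psi(t\mathcal{Q}(v))|=|\chi_{\mathcal{Q}}(a)|=1$ and $\one_{V(\OO)}$ is right $\mathrm{SL}_2^3(\OO)$-invariant, so $|\rho(g)\one_{V(\OO)}(v)|$ remains left $N_0(F)$-invariant regardless of whether $v\in Y(F)$.  Hence the Iwasawa decomposition $g=n_0 n a k$ used there applies verbatim: expanding $b=\sum_{j,k\ge 0}q^{2j}\one_{k+2j}$ via Lemma \ref{lem:Iwasawa} reduces the integral to the $(k,j)$-sum displayed in the middle of the proof of Proposition \ref{prop:I0comp}, with the factors $\psi(t\mathcal{Q}(v))$ and $\overline{\chi}_{\mathcal{Q}}(a)$ both replaced by $1$.

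The absence of $\psi$-cancellation means the $t$-integral is bounded simply by the measure of its support, namely $q^{-k-2j}|a_1a_2a_3|^{-1}$, rather than evaluated exactly as an indicator on $\mathcal{Q}(v)$.  Applying the change of variables $a\mapsto \varpi^{k+2j}a$ used at the end of Proposition \ref{prop:I0comp} and setting $r:=k+2j$, the problem reduces to bounding an $a$-integral over $\{a\in(\OO\cap F^\times)^3:\max_i|a_i^{-1}a_ja_k|\le 1\}$ with integrand a constant multiple of $q^{r(\sum_i d_i/2-4)}\prod_i|a_i|^{1-d_i/2}\,\one_{V(\OO)}\bigl((\varpi^{r}a)^{-1}v\bigr)\,d^{\times}a$.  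When all $v_i\neq 0$ (Case 1), this indicator forces $q^{r}|v_i|\le |a_i|\le 1$, hence $r\le\min_i\mathrm{ord}(v_i)$ and each $\mathrm{ord}(a_i)$ varies over a set of size at most $\mathrm{ord}(v_i)-r+1$; bounding each one-dimensional geometric sum by its largest term times the number of terms produces the factor $|v_i|^{1-d_i/2}$, and summing over $k$, $j$, and $\mathrm{ord}(a_i)$ yields three factors of $(\mathrm{ord}(v_i)+1)$ per index $i$, giving the first bound.

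The case $v_1=0$ is more delicate: the $\one_{V(\OO)}$-constraint on $a_1$ disappears, but the constraint $|a_1^{-1}a_2a_3|\le 1$ survives the change of variables and couples $a_1$ to $a_2,a_3$ via $|a_2a_3|\le|a_1|\le 1$.  Performing the $a_1$-integral $\int|a_1|^{1-d_1/2}d^{\times}a_1$ over this range and bounding it by its largest term times the number of terms produces a factor $(\mathrm{ord}(a_2)+\mathrm{ord}(a_3)+1)|a_2a_3|^{1-d_1/2}$; folding this into the $a_2,a_3$-integrands shifts their exponents from $1-d_i/2$ to $2-d_i/2-d_1/2$ and ultimately produces $|v_i|^{2-d_i/2-d_1/2}$ together with the polynomial factor $(\mathrm{ord}(v_i)+1)^4$, the extra power absorbing the new summation from the unconstrained $a_1$-integration.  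The main technical obstacle is precisely this latter case: the clean factorization of the $a$-integral in Case 1 is lost once $a_1$ is tied to $a_2,a_3$ by the residual constraint; once the ranges of integration are correctly spelled out, however, every step reduces to an elementary geometric series estimate.
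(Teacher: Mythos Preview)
Your proposal is correct and follows essentially the same route as the paper's proof: both replace $\psi$ and $\chi_{\mathcal Q}$ by absolute values, bound the $t$-integral by the measure of its support $q^{-k-2j}|a_1a_2a_3|^{-1}$, apply the change of variables $a\mapsto\varpi^{k+2j}a$, and then in the case $v_1=0$ retain the constraint $|a_2a_3|\le|a_1|$ to integrate out $a_1$. Your explanation of where the $(\mathrm{ord}(v_i)+1)$ powers come from is in fact more detailed than the paper's, which simply writes ``it is easy to obtain the bound claimed in the lemma from this expression.'' One minor slip: after the change of variables the power of $q^r$ should be $\sum_i d_i/2-3$ (with the separate factor $q^{-k}$ still present), not $\sum_i d_i/2-4$; this is harmless for the argument.
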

Here in the lemma $\mathrm{ord}(v_i)$ is the minimum of the $v$-adic valuations of the entries of $v_i$.

\begin{proof}
Arguing as in the proof of Proposition \ref{prop:I0comp} we see that 
\begin{align} \label{adonebound}
\int_{N_0(F) \backslash G(F)}| b\left(\gamma_{0}g\right) \rho\left(g\right)\one_{V(\OO)}(v)|dg =\sum_{k=0}^\infty\sum_{j=0}^\infty q^{2j}\int
\one_{V(\OO)}(a^{-1}v)\prod_{i=1}^3|a_i|^{2-d_i/2}d^\times a dt\,,
\end{align}
where the integral is over $a,t$ such that 
\begin{align*}
q^{-k-2j} =\max(|ta_1a_2a_3|,|a_1|,|a_2|,|a_3|,|a_1^{-1}a_2a_3|,|a_2^{-1}a_1a_3|,|a_3^{-1}a_1a_2|)\,.
\end{align*}
Note that
\begin{align*}
\int_{|t| \leq q^{-k-2j}|a_1a_2a_3|^{-1}}dt
=q^{-k-2j}|a_1a_2a_3|^{-1}\,.
\end{align*}
Using this fact we can simplify the $t$ integral in \eqref{adonebound} to see that it is bounded by 
\begin{align*}
\sum_{k=0}^\infty\sum_{j=0}^\infty q^{-k}\int
\one_{V(\OO)}(a^{-1}v)\prod_{i=1}^3|a_i|^{1-d_i/2}d^\times a\,,
\end{align*}
where the integral is over $a \in (F^\times)^3$ such that 
$$
q^{-k-2j} \geq \max(|a_1|,|a_2|,|a_3|,|a_1^{-1}a_2a_3|,|a_2^{-1}a_1a_3|,|a_3^{-1}a_1a_2|)\,.
$$ 
We take a change of variables $(a_1,a_2,a_3) \mapsto \varpi^{k+2j}(a_1,a_2,a_3)$ to see that this is equal to
\begin{align} \label{almost:last}
\sum_{k=0}^\infty\sum_{j=0}^\infty q^{-k}\int
\one_{V(\OO)}((\varpi^{k+2j}a)^{-1}v)\prod_{i=1}^3\left(\frac{|a_i|}{q^{k+2j}}\right)^{1-d_i/2}d^\times a\,,
\end{align}
where the integral is over the $a_i$ such that $1 \geq \max(|a_1|,|a_2|,|a_3|,|a_1^{-1}a_2a_3|,|a_2^{-1}a_1a_3|,|a_3^{-1}a_1a_2|)$.
If all of the $v_i$ are nonzero then we note that \eqref{almost:last} is bounded by the analogous quantity where we take the integral to be over all $a \in \OO^3$, and it is easy to obtain the bound claimed in the lemma from this expression.  

Now assume $v_1=0$ (so $v_2 \neq 0 \neq v_3$).  In this case \eqref{almost:last} is bounded by 
\begin{align*}
&\sum_{k=0}^\infty\sum_{j=0}^\infty q^{-k}\int_{\OO^3}\one_{\OO}\left(\frac{a_2a_3}{a_1}\right)
\one_{V_2(\OO) \times V_3(\OO)}\left(\varpi^{-k-2j}\left(\frac{v_2}{a_2},\frac{v_3}{a_3}\right)\right)\prod_{i=1}^3\left(\frac{|a_i|}{q^{k+2j}}\right)^{1-d_i/2}d^\times a\,\\
& \leq \sum_{k=0}^\infty\sum_{j=0}^\infty q^{-k}\int_{\OO^2}
\one_{V_2(\OO) \times V_3(\OO)}\left(\varpi^{-k-2j}\left(\frac{v_2}{a_2},\frac{v_3}{a_3}\right)\right)\prod_{i=2}^3(\mathrm{ord}(a_i)+1)\left(\frac{|a_i|}{q^{k+2j}}\right)^{2-d_i/2-d_1/2}d^\times a\,.
\end{align*}
It is easy to obtain the lemma from this bound.
\end{proof}

\section{Bounds on integrals in the non-Archimedean case} \label{sec:bound:na}
In this section $F$ is a characteristic zero non-Archimedean local field and $K=\mathrm{Sp}_6(\OO)$. Fix
$$
(f_1,f_2) \in \mathcal{S}_{BK}(X(F),K) \times \mathcal{S}(V(F))\,.
$$
We bound the integrals attached to these functions that appeared in the proof of Theorem \ref{thm:main}.  These bounds will be used to deduce the absolute convergence statement of Proposition \ref{prop:1cell} below.
All implicit constants in this section are allowed to depend on $f_1$ and $f_2$.

\begin{prop}  \label{prop:open:bound:na} 
For $v \in V'(F)$ 
one has 
\begin{align*} 
\int_{N_0(F) \backslash G(F)} |f_1(\gamma_0g)\rho(g)f_2(v)|dg \ll \begin{cases}\prod_{i=1}^3|v_i|^{-1-d_i/2} & \textrm{ if all }v_i \neq 0\,,\\
|v_2|^{-d_2/2-d_1/2}|v_3|^{-d_3/2-d_1/2} & \textrm{ if }v_1 =0\,.
\end{cases}
\end{align*}
As a function of $v$ this integral has support in the intersection of a compact subset of $V(F)$ with $V'(F)$.  Thus $I(f_1,f_2)(v)$ admits the same bound and has support in a compact subset of $V(F)$. 
\end{prop}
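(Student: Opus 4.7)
The plan is to adapt the computation in the proof of Lemma \ref{lem:abs:open} to general $(f_1,f_2)\in\mathcal{S}_{BK}(X(F),K)\times\mathcal{S}(V(F))$, replacing the basic function $b$ by a pointwise majorant of $|f_1|$ obtained from Lemma \ref{lem:bounded}, and $\one_{V(\OO)}$ by a single pointwise majorant of the orbit $\{\rho(k)f_2:k\in K\cap G(F)\}$. Parametrize $N_0(F)\backslash G(F)$ by $(t,\mathbf{a},k)\in F\times(F^\times)^3\times(K\cap G(F))$ via the Iwasawa decomposition used in Proposition \ref{prop:I0comp}, with measure $dt\prod_i|a_i|^2 d^\times a_i\,dk$.

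By Lemma \ref{lem:bounded} there exist constants $C_1,N$ such that $|f_1(\gamma_0 n_t a_\mathbf{a} k)|\le C_1 M_0(t,\mathbf{a})^2\one_{\{M_0<q^N\}}$, where $M_0(t,\mathbf{a}):=\max(|ta_1a_2a_3|,|a_i|,|a_i^{-1}a_ja_k|)=|\gamma_0 n_t a_\mathbf{a}|^{-1}$ by the matrix identity from the proof of Proposition \ref{prop:I0comp}. By the Weil representation formulas of \S\ref{ssec:Weil} and Lemma \ref{lem:HK},
\[
|\rho(n_t a_\mathbf{a} k)f_2(v)|=\prod_i|a_i|^{-d_i/2}\,|\rho(k)f_2(a^{-1}v)|;
\]
since $f_2$ is locally constant of compact support, $f_2$ is fixed by some open subgroup of $K\cap G(F)$ under the Weil action, so $\rho(K\cap G(F))f_2$ is a finite set and there are constants $C_2,N_f$ with $|\rho(k)f_2(w)|\le C_2\one_{\varpi^{-N_f}V(\OO)}(w)$ uniformly in $k$.

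Integrating $t$ out first yields $\int_F M_0^2\one_{\{M_0<q^N\}}\,dt\ll q^{3N}|a_1a_2a_3|^{-1}$, reducing the problem to estimating
\[
\int\prod_i|a_i|^{1-d_i/2}\,\one_{\{\max(|a_i|,|a_i^{-1}a_ja_k|)\le q^N\}}\,\one_{\{|a_i|\ge q^{-N_f}|v_i|\text{ when }v_i\ne 0\}}\prod_i d^\times a_i.
\]
When no $v_i$ vanishes, the $a_i$-integrals decouple; each is a truncated geometric-type sum over $|a_i|\in[q^{-N_f}|v_i|,q^N]$, dominated (for $d_i\ge 4$) by its lower endpoint and giving $\ll|v_i|^{1-d_i/2}$. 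When $v_1=0$ the $a_1$-integral is instead constrained by $|a_1|\ge q^{-N}|a_2a_3|$ (from $|a_1^{-1}a_2a_3|\le q^N$); integrating $a_1$ out contributes $|a_2 a_3|^{1-d_1/2}$, and the remaining $a_2,a_3$ integrals then produce $|v_i|^{2-d_1/2-d_i/2}$. In either case these are absorbed into the weaker claimed bound $|v_i|^{-1-d_i/2}$ or $|v_i|^{-d_1/2-d_i/2}$ because the uniform estimate $|v_i|\le q^{N+N_f}$ provides two powers of slack. The compact-support claim follows from this same uniform bound on $|v_i|$.

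The main obstacle is the careful bookkeeping of the $a$-integrals above, most notably when some $d_i=2$ so that the integrand $|a_i|^{1-d_i/2}$ is trivial and the one-dimensional integral produces a logarithmic factor of order $\mathrm{ord}(v_i)$; such factors are absorbed into the two powers of slack described above, using the boundedness of $|v_i|$ on the support.
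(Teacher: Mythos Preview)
Your overall strategy is exactly the one in the paper---Iwasawa decomposition, bound $|f_1|$ via Lemma~\ref{lem:bounded}, replace $\rho(k)f_2$ by a uniform compactly supported majorant, then estimate the $t$- and $a$-integrals.  However, there is a sign error in your invocation of Lemma~\ref{lem:bounded} that invalidates the literal chain of inequalities.  From the matrix identity in Proposition~\ref{prop:I0comp} one has
\[
|\gamma_0 n_t a_{\mathbf a}|=\max\bigl(|ta_1a_2a_3|,|a_i|,|a_i^{-1}a_ja_k|\bigr)=:m(t,a),
\]
so your $M_0$ equals $m(t,a)$, \emph{not} its inverse.  Lemma~\ref{lem:bounded} then gives $|f_1(g)|\ll |g|^{-2}=m(t,a)^{-2}$ (with support $m(t,a)<q^{N}$), not $|f_1|\le C_1 M_0^{2}$.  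Your bound $|f_1|\le C_1 M_0^{2}$ is not implied by Lemma~\ref{lem:bounded} and in general fails: elements of $\mathcal{S}_{BK}(X(F),K)$ need not be bounded near the locus $|g|\to 0$.

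Once you correct to $m(t,a)^{-2}$, the $t$-integration changes.  Using that $|a_i|\le q^{N}$ on the support implies $m(t,a)\ge q^{-2N}|a_1a_2a_3|$, one finds
\[
\int_{|t|\le q^{N}/|a_1a_2a_3|} m(t,a)^{-2}\,dt \;\ll\; |a_1a_2a_3|^{-3},
\]
so the $a$-integrand becomes $\prod_i|a_i|^{-1-d_i/2}$ rather than $\prod_i|a_i|^{1-d_i/2}$.  This already yields the proposition's bound $\prod_i|v_i|^{-1-d_i/2}$ directly, with no ``two powers of slack'' needed.  For $v_1=0$ the paper uses instead the sharper pointwise bound $m(t,a)\ge |a_1|$ together with the constraint $|a_1^{-1}a_2a_3|\le q^{N}$, which after integrating out $a_1$ gives the stated $\prod_{i=2}^3|v_i|^{-d_i/2-d_1/2}$; your outline of that case is otherwise correct.
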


\begin{proof}  
We decompose the Haar measure $dg$ as in the proof of Proposition \ref{prop:I0comp}.  Arguing as in that proposition we see that the integral in current proposition is equal to 
\begin{align} \label{na:int:to:bound2}
&\int_{(F^\times)^3 \times F \times K} \left|f_1\left(\gamma_{0}\left(\left(\begin{smallmatrix} 1 & t\\ & 1 \end{smallmatrix} \right)\left(\begin{smallmatrix} a_1^{-1} & \\ & a_1 \end{smallmatrix} \right),\left(\begin{smallmatrix} 1 & t \\ & 1 \end{smallmatrix} \right)\left(\begin{smallmatrix} a_2^{-1} & \\ & a_2 \end{smallmatrix} \right),\left(\begin{smallmatrix} 1 & t \\ & 1 \end{smallmatrix} \right)\left(\begin{smallmatrix} a_3^{-1} & \\ & a_3 \end{smallmatrix} \right)\right)k\right) \right|\\& \times |\rho(k)f_2(a^{-1}v)|\left(\prod_{i=1}^3|a_i|^{2-d_i/2}\right)d^\times a dt dk\,. \nonumber
\end{align}

Now
\begin{align} \label{mta0} \begin{split}
& \ \ \ \left|\gamma_{0}\left(\left(\begin{smallmatrix} 1 & t\\ & 1 \end{smallmatrix} \right)\left(\begin{smallmatrix} a_1^{-1} & \\ & a_1 \end{smallmatrix} \right),\left(\begin{smallmatrix} 1 & t \\ & 1 \end{smallmatrix} \right)\left(\begin{smallmatrix} a_2^{-1} & \\ & a_2 \end{smallmatrix} \right),\left(\begin{smallmatrix} 1 & t \\ & 1 \end{smallmatrix} \right)\left(\begin{smallmatrix} a_3^{-1} & \\ & a_3 \end{smallmatrix} \right)\right)k\right|\\
&= \max(|ta_1a_2a_3|,|a_1|,|a_2|,|a_3|,|a_1^{-1}a_2a_3|,|a_2^{-1}a_1a_3|,|a_3^{-1}a_1a_2|)\\
&=:  m(t,a)\,. \end{split}
\end{align}

By Lemma \ref{lem:bounded} this quantity is bounded for $a,t$ in the support of the integrand in \eqref{na:int:to:bound2}, and $f_1$ itself satisfies the bound
\begin{align} \label{f1:bound}
|f_1(g)| \ll |g|^{-2}\,.
\end{align}
For $a \in F^3$ let $|a|:=\max_i|a_i|$. Let
\begin{align} \label{k:int}
\widetilde{f}_2(v):=\int_K |\rho(k)f_2(v)|dk\,.
\end{align}

Assume for the moment that no $v_i$ is zero.
For some $c \in \RR_{>0}$ \eqref{na:int:to:bound2} is bounded by a constant times 
\begin{align} \label{na:int:to:bound3}
\int_{|a|\leq c} \int_{|t| \leq \frac{c}{|a_1a_2a_3|}}
m(t,a)^{-2} 
\widetilde{f}_2(a^{-1}v)\left(\prod_{i=1}^3|a_i|^{2-d_i/2}\right) dt d^\times a\,.  
\end{align}
For $|a| \leq c$ one has
\begin{align} \label{bound:for:later}
m(t,a) \geq |a_1| \geq c^{-2}|a_1a_2a_3|\,.
\end{align}
Thus \eqref{na:int:to:bound3} is bounded by 
\begin{align*}
&
c^4  \int_{|a|\leq c} \int_{|t| \leq \frac{c}{|a_1a_2a_3|}}
\widetilde{f}_2(a^{-1}v)\left(\prod_{i=1}^3|a_i|^{-d_i/2}\right)dt d^\times a \\
&\ll_c \int_{|a|\leq c} \widetilde{f}_2(a^{-1}v)\left(\prod_{i=1}^3|a_i|^{-1-d_i/2}\right)
d^\times a \,.
\end{align*} 
Since $f_2$ is a Schwartz function (in the usual sense) this has compact support as a function of $v \in V(F)$.  
Moreover, it is bounded by a constant times 
$\prod_{i=1}^3|v_i|^{-1-d_i/2}$.

Now assume that $v_1=0$, which implies both $v_2$ and $v_3$ are nonzero.  In this case rather than using the bound  \eqref{na:int:to:bound3} we use the stronger bound
\begin{align} \label{na:int:to:bound4}
\int_{\substack{|a|\leq c\\ |{a_2a_3}| \leq c|a_1|}} \int_{|t| \leq \frac{c}{|a_1a_2a_3|}}
m(t,a)^{-2} 
\widetilde{f}_2(a^{-1}v)\left(\prod_{i=1}^3|a_i|^{2-d_i/2}\right) dt d^\times a\,.  
\end{align}
This bound is still valid when none of the $v_i$ are zero, but we did not require it in that case.  We have 
$$
m(t,a) \geq |a_1|\,,
$$
so \eqref{na:int:to:bound4} is bounded by 
\begin{align*}
&\int_{\substack{|a|\leq c\\ |{a_2a_3}| \leq c|a_1|}} \int_{|t| \leq \frac{c}{|a_1a_2a_3|}}
|a_1|^{-d_1/2}
\widetilde{f}_2(a^{-1}v)\left(\prod_{i=2}^3|a_i|^{2-d_i/2}\right) dt d^\times a\\  &\ll\int_{\substack{|a|\leq c\\ |{a_2a_3}| \leq c|a_1|}} 
|a_1|^{-1-d_1/2}
\widetilde{f}_2(a^{-1}v)\left(\prod_{i=2}^3|a_i|^{1-d_i/2}\right) d^\times a\\
& \ll \int_{{|a_2|,|a_3|}\leq c}  \widetilde{f}_2(0,a_2^{-1}v_2,a_3^{-1}v_3)\left(\prod_{i=2}^3|a_i|^{-d_i/2-d_1/2}\right) d^\times a\,.
\end{align*}
{Note that the product is now over $2 \leq i \leq 3$ instead of $1 \leq i \leq 3$.}
It is clear that this {integral} is supported in a compact subset of $V_2(F) \times V_3(F)$ and that it is bounded by a constant times 
 $|v_2|^{-d_2/2-d_1/2}|v_3|^{-d_3/2-d_1/2}$.
\end{proof}

\section{Bounds on integrals in the Archimedean case}
\label{sec:bound:arch}

In this section $F$ is an Archimedean local field and $K \leq \mathrm{Sp}_6(F)$ is a maximal compact subgroup.  We estimate the local integrals defined in \S \ref{ssec:loc:func}.  The bounds obtained in this section will be used to prove Proposition \ref{prop:1cell}, the absolute convergence statement used in the proof of Theorem \ref{thm:main}.  As usual, the bound in the Archimedean case is slightly harder to prove than in the non-Archimedean case, but the basic outline of the proof is the same.
We fix
$$
(f_1,f_2) \in \mathcal{S}_{BK}(X(F),K) \times \mathcal{S}(V(F))\,.
$$
All implicit constants are allowed to depend on $f_1$, $f_2$.
 
 The following lemma will often be used below:
 \begin{lem} \label{lem:basic:bound2}
 Let $A,B \in \RR_{>0}$, $C \in \RR_{\geq 0}$ and let $x \in F^\times$.  If $A>B$ and $A \neq B+C$ one has
 \begin{align*}
 \int_{F^\times} \max(|a^{-1}x|,1)^{-A}|a|^{-B}\max(|a|,1)^{-C}d^\times a \ll_{A,B,C}\max(|x|,1)^{-\min(A,B+C)}\min(|x|,1)^{-B}\,.
 \end{align*}
 \end{lem}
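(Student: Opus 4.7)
The integrand depends on $a \in F^\times$ only through $|a|$, so by passing to polar coordinates (in the complex case) or by splitting $F^\times$ into $\pm$ halves (in the real case), the integral reduces, up to a harmless constant factor, to
$$J(r) := \int_0^\infty \max(r/t, 1)^{-A} \, t^{-B} \, \max(t,1)^{-C} \, \frac{dt}{t},$$
where $r := |x|$ and $t := |a|$. It suffices to show $J(r) \ll_{A,B,C} \max(r,1)^{-\min(A, B+C)} \min(r,1)^{-B}$.

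The plan is a direct case analysis at the two transition points $t = r$ and $t = 1$, where the two $\max$ factors change form. On each of the resulting subintervals the integrand collapses to a monomial in $t$ (times a factor depending only on $r$), and I simply evaluate or bound the elementary integral. The hypothesis $A > B$ is used on the region $t \leq \min(r,1)$, where the integrand is $r^{-A} t^{A - B - 1}$, so that $\int_0^{\min(r,1)} t^{A-B-1}\,dt$ converges. The hypothesis $A \neq B+C$ is used only on the region $1 \leq t \leq r$ (which appears when $r \geq 1$), where the integrand is $r^{-A} t^{A-B-C-1}$: both when $A > B+C$ and when $A < B+C$ the resulting integral is bounded by a constant times $r^{-\min(A, B+C)}$, while the excluded equality case would produce a stray logarithm.

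Assembling the pieces: when $r \leq 1$ the three subintervals $(0, r)$, $(r, 1)$, $(1, \infty)$ contribute respectively $\tfrac{r^{-B}}{A-B}$, $\tfrac{r^{-B}-1}{B}$, and $\tfrac{1}{B+C}$, each $\ll r^{-B}$, which matches the right-hand side since $\max(r,1) = 1$. When $r \geq 1$ the subintervals $(0,1)$, $(1, r)$, $(r, \infty)$ contribute $\ll r^{-A}$, $\ll r^{-\min(A, B+C)}$, and $\ll r^{-B-C}$ respectively; since $A > B$ forces $A > 0$ and since $\min(A, B+C) \leq A$ and $\min(A, B+C) \leq B+C$, each piece is dominated by $r^{-\min(A, B+C)}$, matching the right-hand side since $\min(r,1) = 1$.

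There is no serious obstacle — the argument is pure bookkeeping. The only subtlety is checking that the middle interval $(1,r)$ when $r$ is large really produces the exponent $\min(A, B+C)$ in both the cases $A > B+C$ and $A < B+C$; this is exactly where the hypothesis $A \neq B+C$ enters, as it rules out the borderline case where the antiderivative is logarithmic.
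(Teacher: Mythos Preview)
Your proposal is correct and follows essentially the same approach as the paper: both arguments split the integral at the two transition points $|a|=1$ and $|a|=|x|$, treat the cases $|x|\leq 1$ and $|x|>1$ separately, and evaluate the resulting monomial integrals. The only cosmetic difference is that you first reduce to a real integral over $(0,\infty)$ via polar coordinates, whereas the paper works directly over $F^\times$; the underlying computation is identical.
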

 \begin{proof}
 We break the integral up into two ranges corresponding to $|a| \leq 1$ and $|a|>1$.  If $|x|<1$ then in the first range the integral is
\begin{align}
\int_{0<|x|<|a| \leq 1}|a|^{-B}d^\times a+\int_{|a| \leq |x|}|a|^{A-B}|x|^{-A }d^\times a \ll_{A,B} |x|^{-B}\,. 
\end{align}
 If $|x| \geq 1$ then in the first range the integral is 
 $$
 \int_{|a| \leq 1}|a|^{A-B}|x|^{-A} d^\times a \ll_{A,B}|x|^{-A}\,.
 $$
 Now consider the second range, in which $|a|>1$.  If $|x| \leq 1$ then this integral is 
 \begin{align*}
    \int_{|a|>1}|a|^{-B-C}d^\times a \ll_{B,C} 1\,.
 \end{align*}
 If $|x| >1$ then this integral is
 \begin{align*}
    \int_{|a|>1}\max(|a^{-1}x|,1)^{-A}|a|^{-B-C}d^\times a&=\int_{|x| <|a|}|a|^{-B-C}d^\times a+\int_{1<|a|\leq |x|}|a|^{A-B-C}|x|^{-A}d^\times a\\ &\ll_{A,B,C} |x|^{-B-C}+|x|^{-A}
    +|x|^{-B-C}\,.
 \end{align*}
 \end{proof}

\begin{prop}    \label{prop:open:bound:a}
  For any $N_1,N_2,N_3 \in \ZZ_{\geq 0}$ one has 
\begin{align*}
\int_{N_0(F) \backslash G(F)} |f_1(\gamma_0g)\rho(g)f_2(v)|dg \ll_{N_1,N_2,N_3} \prod_{i=1}^3 \max(|v_i|,1)^{-N_i}\min(|v_i|,1)^{-1-d_i/2}\,,
\end{align*}
for $v \in V'(F)$ with no $v_i=0$.  Thus $I(f_1,f_2)(v)$ admits the same bound.  
\end{prop}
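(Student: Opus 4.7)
The plan is to follow the outline of Proposition \ref{prop:open:bound:na} closely, substituting the Archimedean Schwartz decay $|f_1(g)| \ll_N |g|^{-2-N}$ from Lemma \ref{lem:bounded} (valid for every $N \in \ZZ_{\geq 0}$) for the compact-support argument used there, and estimating the resulting one-dimensional $a_i$-integrals via Lemma \ref{lem:basic:bound2} (with $C=0$) or a direct change of variables. First I would apply the Iwasawa decomposition of $G(F)$ as in Proposition \ref{prop:open:bound:na} and set $\widetilde f_2(v) := \int_K |\rho(k)f_2(v)|\,dk$; since $K$ is compact, $\widetilde f_2$ is dominated by a Schwartz function on $V(F)$. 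Writing $|\gamma_0 n_t a k| = m(t,a)$ as in \eqref{mta0}, the desired integral is bounded by
\begin{align*}
\ll_N \int_{(F^\times)^3\times F} m(t,a)^{-2-N}\,\widetilde f_2(a^{-1}v) \prod_{i=1}^3 |a_i|^{2-d_i/2}\, dt\, d^\times a.
\end{align*}
Carrying out the $t$-integration exactly as in Proposition \ref{prop:open:bound:na} by splitting into the regions where $|ta_1a_2a_3|$ is greater or not greater than $m'(a):=\max(|a_1|,|a_2|,|a_3|,|a_1^{-1}a_2a_3|,|a_2^{-1}a_1a_3|,|a_3^{-1}a_1a_2|)$ yields $\int_F m(t,a)^{-2-N}\,dt\ll m'(a)^{-1-N}|a_1a_2a_3|^{-1}$.

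Next I use $m'(a)\ge |a_i|$ to bound $m'(a)^{-1-N}\le \prod_i|a_i|^{-\beta_i}$ for any nonnegative $\beta_i$ summing to $1+N$, and factorize the Schwartz bound as $\widetilde f_2(a^{-1}v)\ll\prod_i (1+|a_i^{-1}v_i|)^{-M_i}$ for any $M_i\ge 0$; this is valid because $(1+|a^{-1}v|)^{\sum M_i}\ge \prod_i (1+|a_i^{-1}v_i|)^{M_i}$ and $\widetilde f_2(w)\ll_M (1+|w|)^{-M}$ for every $M$. The resulting bound factors as $\ll \prod_{i=1}^3 J_i$ with
\begin{align*}
J_i = \int_{F^\times}(1+|a_i^{-1}v_i|)^{-M_i}\,|a_i|^{1-d_i/2-\beta_i}\,d^\times a_i.
\end{align*}
The change of variables $a_i\mapsto v_i a_i^{-1}$ transforms $J_i$ into $|v_i|^{1-d_i/2-\beta_i}$ times a finite constant depending only on $M_i,d_i,\beta_i$, provided $1-d_i/2-\beta_i<0<M_i+1-d_i/2-\beta_i$; equivalently, one may apply Lemma \ref{lem:basic:bound2} with $A=M_i$, $B=d_i/2-1+\beta_i$, $C=0$.

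The main obstacle is that no single choice of $\beta_i$ simultaneously produces $|v_i|^{-1-d_i/2}$ for $|v_i|\le 1$ and $|v_i|^{-N_i}$ for $|v_i|>1$: the first forces $\beta_i=2$, the second $\beta_i=1-d_i/2+N_i$. I resolve this by letting the choice of $\beta_i$ (and hence of $N=\sum_i\beta_i-1$) depend on the sign pattern of $(|v_i|\lessgtr 1)_{i=1}^3$: set $\beta_i=2$ for indices with $|v_i|\le 1$ and $\beta_i=1-d_i/2+N_i$ for indices with $|v_i|>1$. Assuming without loss of generality that each $N_i\ge d_i/2-1$ (the stated bound weakens monotonically in $N_i$ for $|v_i|>1$) and taking the $M_i$ large enough ensures all nonnegativity and integrability conditions hold. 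With these choices, $J_i\ll |v_i|^{1-d_i/2-\beta_i}=\max(|v_i|,1)^{-N_i}\min(|v_i|,1)^{-1-d_i/2}$ in both regimes of $|v_i|$, and since $N$ takes only finitely many values as $v$ varies over sign patterns, the implicit constant is uniform in $v$.
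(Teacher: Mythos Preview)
Your argument is essentially correct but takes a different route from the paper. The paper bounds $|f_1(g)|$ by $\min(m(t,a),1)^{-2}\max(m(t,a),1)^{-N}$ and then splits the $(a,t)$-domain into the two regions $\max(|a|,|ta_1a_2a_3|)<1$ and $\max(|a|,|ta_1a_2a_3|)\ge1$, treating each separately and invoking Lemma~\ref{lem:basic:bound2} with a nonzero $C$ in the second region. You instead integrate out $t$ once and for all to obtain $m'(a)^{-1-N}|a_1a_2a_3|^{-1}$, distribute $m'(a)^{-1-N}\le\prod_i|a_i|^{-\beta_i}$, and then let the exponents $\beta_i$ (hence $N$) depend on the finite sign pattern of $(|v_i|\lessgtr1)$. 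Your approach is more elementary in that it only needs Lemma~\ref{lem:basic:bound2} with $C=0$ (indeed just a scaling substitution), at the cost of an eightfold case split on $v$; the paper's approach avoids casing on $v$ but splits the integration domain instead.

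There is one small gap in your parameter bookkeeping. The WLOG assumption ``$N_i\ge d_i/2-1$'' is not strong enough: when $d_i=2$ and $N_i=0$ your choice $\beta_i=1-d_i/2+N_i=0$ makes the convergence condition $1-d_i/2-\beta_i<0$ fail (so $J_i$ diverges at $|a_i|\to\infty$), and more generally you may end up with $N=\sum_i\beta_i-1<0$, which is not allowed in Lemma~\ref{lem:bounded}. Both issues disappear if you strengthen the WLOG to $N_i\ge d_i/2$: then every $\beta_i\ge1$, the integrals $J_i$ converge, and $N=\sum_i\beta_i-1\ge2$ is a legitimate nonnegative integer. Since proving the bound for larger $N_i$ yields a smaller right-hand side and hence implies the bound for smaller $N_i$, this strengthening is harmless.
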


\begin{proof}
Let $\widetilde{f}_2(v):=\int_{K}|\rho(k)f_2(v)|dk$ as before.  It is a continuous, rapidly decreasing function of $v$.
Let $N \in \ZZ_{\geq 0}$.  {Using Lemma \ref{lem:bounded} and arguing} as in the proof of Proposition \ref{prop:open:bound:na} we see that the integral in the current proposition is bounded by a constant depending on $N$ times
\begin{align*}
&\int_{(F^\times)^3 \times F} \min(m(t,a),1)^{-2}\max(m(t,a),1)^{-N}\widetilde{f}_2(a^{-1}v)\left(\prod_{i=1}^3|a_i|^{2-d_i/2}\right)dtd^\times a \,,
\end{align*}
with $m(t,a)$ defined as in \eqref{mta0}. 
For any $N_1,N_2,N_3 \in \ZZ_{\geq 0}$ this is bounded by a constant depending on $N_1,N_2,N_3$ times
\begin{align} \label{basic:bound0}
&\int_{(F^\times)^3 \times F} \min(m(t,a),1)^{-2}\max(m(t,a),1)^{-N}dt\prod_{i=1}^3\max(|a_i^{-1}v_i|,1)^{-N_i}|a_i|^{2-d_i/2}d^\times a \,.
\end{align}

For $a \in F^3$ let $|a|:=\max_i|a_i|$.
We separate the integral over $(F^\times)^3 \times F$ in \eqref{basic:bound0} into two ranges
\begin{align} \label{2ranges0}
\int_{\max(|a|,|ta_1a_2a_3|) <1} +\int_{\max(|a|,|ta_1a_2a_3|)\geq 1}\,.
\end{align}
We will bound the integral in each of these ranges separately.  All the implicit constants from this point on are allowed to depend on $N,N_1,N_2,N_3$.  We will always assume in the proof that $N_i>d_i/2+1$ because this will be necessary in our applications of 
Lemma \ref{lem:basic:bound2} below.  This is harmless because making the $N_i$ larger will only strengthen the bound asserted by the proposition.

In the first range in \eqref{2ranges0} we have
\begin{align}
m(t,a) \geq |a_1a_2a_3|
\end{align}
as in \eqref{bound:for:later}.  Thus  we see that this contribution is bounded by 
 a constant times
\begin{align}
&\int_{|a|\leq 1} \int_{|t| \leq \frac{1}{|a_1a_2a_3|}} |a_1a_2a_3|^{-2} \prod_{i=1}^3\max(|a_i^{-1}v_i|,1)^{-N_i}|a_i|^{2-d_i/2} dtd^\times a\,.
\end{align}
This in turn is bounded by a constant times
\begin{align} \label{forlater}
&\int_{|a|\leq 1} \prod_{i=1}^3\max(|a_i^{-1}v_i|,1)^{-N_i}|a_i|^{-1-d_i/2}d^\times a\,. 
\end{align}
For each factor we apply Lemma \ref{lem:basic:bound2} with $A=N_i$, $B=1+d_i/2$ and $C>N_i-1-d_i/2$ to see that this is 
\begin{align} \label{first:bound}
O\left(\prod_{i=1}^3\max(|v_i|,1)^{-N_i}\min(|v_i|,1)^{-1-d_i/2}\right).
\end{align}

In second range in  \eqref{2ranges0}
we have $m(t,a) \geq 1$.  {Note that for $n \in \ZZ_{>0}$ and $a_1,\dots,a_n \in \RR_{\geq 1}$ one has that
\begin{align} \label{triv}\max_{1\leq i \leq n}(a_i) \geq \left(\prod_{i=1}^na_i\right)^{1/n}\,.
\end{align}Thus
}
\begin{align} \label{fourth}
m(t,a) \geq \left(\max(|ta_1a_2a_3|,1)\max(|a_1|,1)\max(|a_2|,1)\max(|a_3|,1)\right)^{1/4}\,.
\end{align}
Thus the contribution of {the second range} is bounded by a constant depending on $N$ times 
\begin{align*}
    &\int_{(F^\times)^3 \times F}
    \max(|ta_1a_2a_3|,1)^{-N/4}dt\prod_{i=1}^3\max(|a_i^{-1}v_i|,1)^{-N_i}\max(|a_i|,1)^{-N/4}|a_i|^{2-d_i/2}d^\times a\\
    & \ll_N \int_{(F^\times)^3}
    \prod_{i=1}^3\max(|a_i^{-1}v_i|,1)^{-N_i}\max(|a_i|,1)^{-N/4}|a_i|^{1-d_i/2}d^\times a\,.
\end{align*}
{For $N$ large enough this is bounded by a constant times
\begin{align*}
    \int_{(F^\times)^3}
    \prod_{i=1}^3\max(|a_i^{-1}v_i|,1)^{-N_i}\max(|a_i|,1)^{-N/4}|a_i|^{-1-d_i/2}d^\times a\,.
\end{align*}
Choosing $N>4\max_{1 \leq i \leq 3}(N_i-d_i/2-1)$ and applying Lemma \ref{lem:basic:bound2} on the $i$th factor with $A=N_i$, $B=d_i/2+1$ and $C=N/4$  we arrive at a bound of
$$
O_{N_1,N_2,N_3}\left(\prod_{i=1}^3\max(|v_i|,1)^{-N_i}\min(|v_i|,1)^{-1-d_i/2} 
\right),
$$
which is the same as \eqref{first:bound}.}
\end{proof}

We also require the analogous bound when some $v_i$ is zero.  

\begin{prop}    \label{prop:open:bound:a2}
  For any $N_2,N_3 \in \ZZ_{\geq 0}$ one has 
\begin{align} \label{a:int:to:bound}
\int_{N_0(F) \backslash G(F)} |f_1(\gamma_0g)\rho(g)f_2(v)|dg \ll_{N_2,N_3} \prod_{i=2}^3 \max(|v_i|,1)^{-N_i}\min(|v_i|,1)^{-d_1/2-d_i/2}\,,
\end{align}
for $v \in V'(F)$ with $v_1=0$  Thus $I(f_1,f_2)(v)$ admits the same bound.  
\end{prop}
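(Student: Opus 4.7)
Proceeding as in the proof of Proposition \ref{prop:open:bound:a}, after the Iwasawa decomposition and an application of Lemma \ref{lem:bounded}, the task reduces to bounding
\[
\int_{(F^\times)^3 \times F} \min(m(t,a), 1)^{-2}\max(m(t,a), 1)^{-N}\widetilde{f}_2(a^{-1}v)\prod_{i=1}^3 |a_i|^{2-d_i/2}\,dt\,d^\times a
\]
for arbitrarily large $N \in \ZZ_{\geq 0}$, with $m(t,a)$ as in \eqref{mta0} and $\widetilde{f}_2$ as in \eqref{k:int}.  The essential new difficulty when $v_1 = 0$ is that $\widetilde{f}_2(0, a_2^{-1}v_2, a_3^{-1}v_3)$ is independent of $a_1$, so $\widetilde f_2$ contributes no decay in $a_1$.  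To compensate, we plan to exploit the additional lower bounds $m(t,a) \geq |a_1|$ and $m(t,a) \geq |a_1^{-1}a_2a_3|$ coming from \eqref{mta0}, which together imply $m(t,a) \geq |a_2 a_3|^{1/2}$, together with $m(t,a) \geq |a_2|, |a_3|$ to handle the regions where $|a_2|$ or $|a_3|$ is large.

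The plan is to perform the $t$ integration first, using $m \geq \max(|ta_1a_2a_3|, |a_1|, |a_1^{-1}a_2a_3|)$, and then the $a_1$ integration by splitting $F^\times$ into the four ranges
\[
\{|a_1| \leq |a_2 a_3|\},\ \{|a_2 a_3| < |a_1| \leq |a_2 a_3|^{1/2}\},\ \{|a_2 a_3|^{1/2} < |a_1| \leq 1\},\ \{|a_1| > 1\}
\]
and applying in each the sharpest effective lower bound on $m(t,a)$ as a function of $a_1$.  A careful computation in each range should combine to give a bound of the form
\[
C_N\,|a_2 a_3|^{-d_1/2 - 1}\prod_{i=2}^3\max(|a_i|, 1)^{-C'}
\]
on the iterated $t, a_1$ integrations, where $C'$ can be made arbitrarily large by taking $N$ large, the large-$|a_i|$ decay coming from the bounds $m(t,a) \geq |a_i|$.

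Finally, applying Lemma \ref{lem:basic:bound2} to each of the $a_2, a_3$ integrations with parameters $A=N_i$, $B = d_1/2 + d_i/2 - 1$, and $C = C'$ sufficiently large yields a bound of the form
\[
\prod_{i=2}^3 \max(|v_i|, 1)^{-N_i}\min(|v_i|, 1)^{-d_1/2 - d_i/2 + 1},
\]
which is slightly stronger than the claimed bound since $\min(|v_i|, 1) \leq 1$.  The main obstacle will be the careful bookkeeping across the four subregions; the delicate case is $|a_1| \sim |a_2 a_3|^{1/2}$, where $m(t,a)$ attains its minimum $\sim |a_2 a_3|^{1/2}$ as a function of $a_1$ (for fixed $t, a_2, a_3$), and where the integrations over $t$ and $a_1$ are most tightly constrained.
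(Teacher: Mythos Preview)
Your approach is correct and essentially the same as the paper's: both start from the same reduction via the Iwasawa decomposition and Lemma~\ref{lem:bounded}, both recognize that when $v_1=0$ the missing decay in $a_1$ must be recovered from the lower bounds $m(t,a)\geq |a_1|$ and $m(t,a)\geq |a_1^{-1}a_2a_3|$, and both finish by applying Lemma~\ref{lem:basic:bound2} to the $a_2,a_3$ integrals.

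The organization differs slightly. The paper first splits according to whether $\max(|a|,|a_1^{-1}a_2a_3|,|ta_1a_2a_3|)$ is $<1$ or $\geq 1$; in the first range it uses only $m\geq |a_1|$ and integrates $t$ trivially, while in the second it uses a five-factor product lower bound on $m$ and then splits $a_1$ into the three ranges $|a_1|\geq 1$, $|a_2a_3|<|a_1|<1$, and $|a_1|\leq\min(1,|a_2a_3|)$. You instead integrate in $t$ uniformly and then split $a_1$ into four ranges keyed to $|a_2a_3|$, $|a_2a_3|^{1/2}$, and $1$. Your scheme is somewhat cleaner and your more careful use of $m\geq\max(|a_1|,|a_1^{-1}a_2a_3|)$ does yield the slightly sharper intermediate bound $|a_2a_3|^{-d_1/2-1}$ and hence $B=d_1/2+d_i/2-1$ in Lemma~\ref{lem:basic:bound2}, one better than the paper's $B=d_1/2+d_i/2$. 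Two small points to watch when you fill in the details: your four-way split in $|a_1|$ is written for $|a_2a_3|\leq 1$ and collapses when $|a_2a_3|>1$ (this case is harmless, being absorbed by the $\max(|a_i|,1)^{-C'}$ decay, but should be noted), and you need to be explicit that the $\max(|a_i|,1)^{-C'}$ factors come from allocating part of the exponent $N$ in $\max(m,1)^{-N}$ to the bounds $m\geq |a_2|,|a_3|$ before carrying out the $t,a_1$ integrations.
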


\begin{proof}
Arguing as in the proof of Proposition \ref{prop:open:bound:a} we see that for any $N,N_2,N_3 \in \ZZ_{\geq 0}$ this is bounded by a constant depending on $N,N_2,N_3$ times
\begin{align} \label{basic:bound}
&\int_{(F^\times)^3 \times F} \min(m(t,a),1)^{-2}\max(m(t,a),1)^{-N}dt|a_1|^{2-d_1/2}\prod_{i=2}^3\max(|a_i^{-1}v_i|,1)^{-N_i}|a_i|^{2-d_i/2}d^\times a 
\end{align}
with $m(t,a)$ defined as in \eqref{mta0}.
We begin by dividing the integral into 
ranges as follows:
\begin{align} \label{a1:divide}
\int_{\max(|a|,|a_1^{-1}a_2a_3|,|ta_1a_2a_3|)<1}+\int_{\max(|a|,|a_1^{-1}a_2a_3|,|ta_1a_2a_3|)\geq 1}\,.
\end{align}
To ease notation, all constants in this proof are allowed to depend on $N,N_2,N_3$ and the $d_i$.  We will also assume that $N_i>d_i/2+d_1/2$ in order to justify our applications of Lemma \ref{lem:basic:bound2}.  This is harmless for our purposes.

Consider the first range in \eqref{a1:divide}.  We have 
$m(t,a) \geq |a_1|$, so this contribution is bounded by 
\begin{align*}
    &\int_{\substack{(F^\times)^3 \times F\\\max(|a|,|a_1^{-1}a_2a_3|,|ta_1a_2a_3|)<1}} dt|a_1|^{-d_1/2}\prod_{i=2}^3\max(|a_i^{-1}v_i|,1)^{-N_i}|a_i|^{2-d_i/2}d^\times a\\
    & \ll \int_{\substack{(F^\times)^3 \\\max(|a|,|a_1^{-1}a_2a_3|) \leq 1}} |a_1|^{-1-d_1/2}\prod_{i=2}^3\max(|a_i^{-1}v_i|,1)^{-N_i}|a_i|^{1-d_i/2}d^\times a\\
    & \ll \int_{\substack{(F^\times)^2\\ \max(|a_2|,|a_3|)<1}} \prod_{i=2}^3\max(|a_i^{-1}v_i|,1)^{-N_i}|a_i|^{-d_i/2-d_1/2}d^\times a_i\,.
\end{align*}
Here we have trivially estimated the integrals over $t$ and $a_1$.  For any $N>0$ this is bounded by 
\begin{align} \label{for:later}
    \int_{\substack{(F^\times)^2}} \prod_{i=2}^3\max(|a_i^{-1}v_i|,1)^{-N_i}\max(|a_i|,1)^{-N}|a_i|^{-d_i/2-d_1/2}d^\times a_i\,.
\end{align}
Choosing $N>\max_{2 \leq i \leq 3}(N_i-d_i/2-d_1/2)$ and applying Lemma \ref{lem:basic:bound2} on the $i$th factor with $A=N_i$, $B=d_i/2+d_1/2$ and $C=N$ we see that this integral is 
\begin{align} \label{second:O}
O_{N_2,N_3}\left(\prod_{i=2}^3\max(|v_i|,1)^{-N_i} \min(|v_i|,1)^{-d_i/2-d_1/2}\right).
\end{align}

In the second range in \eqref{a1:divide} we have {$m(t,a) \geq 1$.  Using \eqref{triv} we deduce that}
\begin{align*}
m(t,a) \geq \left(\max(|ta_1a_2a_3|,1)\max(|a_1^{-1}a_2a_3|,1)\max(|a_1|,1)\max(|a_2|,1)\max(|a_3|,1)\right)^{1/5},
\end{align*}
and hence the contribution of the second range to \eqref{basic:bound} is bounded by 
\begin{align*} 
&\int_{(F^\times)^3 \times F} \left(\max(|ta_1a_2a_3|,1)\max(|a_1^{-1}a_2a_3|,1)\max(|a_1|,1)\right)^{-N/5}dt\\
& \times |a_1|^{2-d_1/2}\prod_{i=2}^3\max(|a_i^{-1}v_i|,1)^{-N_i}\max(|a_i|,1)^{-N/5}|a_i|^{2-d_i/2}d^\times a \\
& \ll \int_{(F^\times)^3} \left(\max(|a_1^{-1}a_2a_3|,1)\max(|a_1|,1)\right)^{-N/5}|a_1|^{1-d_1/2}\\& \times \prod_{i=2}^3\max(|a_i^{-1}v_i|,1)^{-N_i}\max(|a_i|,1)^{-N/5}|a_i|^{1-d_i/2}d^\times a\,.
\end{align*}

The contribution of $|a_1|\geq 1$ is bounded by a constant depending on $N$ times 
\begin{align}
    \int_{(F^\times)^2}  \prod_{i=2}^3\max(|a_i^{-1}v_i|,1)^{-N_i}\max(|a_i|,1)^{-N/5{+1}}|a_i|^{-d_i/2}d^\times a_i\,.
\end{align}
Choosing $N>5\max_{2 \leq i \leq 3}(N_i-d_i/2){+5}$ and applying Lemma \ref{lem:basic:bound2} on the $i$th factor with $A=N_i$, $B=d_i/2$ and $C=N/5{-1}$ we see that this integral is 
\begin{align} \label{third:O}
O\left(\prod_{i=2}^3\max(|v_i|,1)^{-N_i} \min(|v_i|,1)^{-d_i/2}\right).
\end{align}
Since we have dealt with the contribution of $|a_1| \geq 1$, we are left with bounding
\begin{align*}
 &\int_{\substack{(F^\times)^3\\ |a_1|<1}} \max(|a_1^{-1}a_2a_3|,1)^{-N/5}|a_1|^{1-d_1/2} \prod_{i=2}^3\max(|a_i^{-1}v_i|,1)^{-N_i}\max(|a_i|,1)^{-N/5}|a_i|^{1-d_i/2}d^\times a\,.
\end{align*}
We break this two ranges, namely $|a_2a_3|<|a_1|<1$ and $|a_1| \leq \min(1,|a_2a_3|) $.  The first range is {bounded by }
\begin{align*}
 &\int_{\substack{(F^\times)^3\\ |a_2a_3|<|a_1|<1}}|a_1|^{-d_1/2} \prod_{i=2}^3\max(|a_i^{-1}v_i|,1)^{-N_i}\max(|a_i|,1)^{-N/5}|a_i|^{1-d_i/2}d^\times a\\
 &\ll\int_{\substack{(F^\times)^2}}(1+|a_2a_3|^{-d_1/2}) \prod_{i=2}^3\max(|a_i^{-1}v_i|,1)^{-N_i}\max(|a_i|,1)^{-N/5}|a_i|^{1-d_i/2}d^\times a\,.
\end{align*}
This is dominated by \eqref{for:later} (with $N$ replaced by $N/5$) and hence bounded by \eqref{second:O} for $N$ large enough.
Assuming without loss that $N/5+1-d_1/2>0$ the second range is
\begin{align*}
    &\int_{\substack{(F^\times)^3\\ |a_1|\leq \min(1,|a_2a_3|)}}|a_2a_3|^{-N/5}|a_1|^{N/5+1-d_1/2} \prod_{i=2}^3\max(|a_i^{-1}v_i|,1)^{-N_i}\max(|a_i|,1)^{-N/5}|a_i|^{1-d_i/2}d^\times a\\
    &\ll \int_{\substack{(F^\times)^2\\ }}|a_2a_3|^{-N/5}\min(1,|a_2a_3|)^{N/5+1-d_1/2}\prod_{i=2}^3\max(|a_i^{-1}v_i|,1)^{-N_i}\max(|a_i|,1)^{-N/5}|a_i|^{1-d_i/2}d^\times a\\
    &\leq \int_{\substack{(F^\times)^2\\ }}\prod_{i=2}^3\max(|a_i^{-1}v_i|,1)^{-N_i}\max(|a_i|,1)^{-N/5}|a_i|^{2-d_i/2-d_1/2}d^\times a\,.
\end{align*}
This is dominated by \eqref{for:later} (with $N$ replaced by $N/5$) and hence bounded by \eqref{second:O} for $N$ large enough.  
\end{proof}

\section{Absolute convergence} \label{sec:AC}

In this section we prove the absolute convergence statement
that makes the proof of the summation formula in \S \ref{sec:summation} rigorous.
We begin with the following lemma:
\begin{lem} \label{lem:prod:rule}
For $x \in F_\infty^n$ and $v|\infty$ let $|x|_v:=\max\{|x_i|_v:1 \leq i \leq n\}$.  Let $A>0$, $N>0$, $\beta \in \OO \cap F^\times$ be given.  If $\alpha \in \beta^{-1}\OO^n - 0$ then 
\begin{align*}
\prod_{v|\infty}\left(\max(|\alpha|_v,1)^{-N-A}\min(|\alpha|_v,1)^{-A}\right)\ll_{A,\beta} \prod_{v|\infty}\max(|\alpha|_v,1)^{-N}\,.
\end{align*}
\end{lem}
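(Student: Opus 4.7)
The plan is to use the identity $\max(x,1)\min(x,1)=x$ valid for $x>0$ to collapse the max/min factors, and then invoke the product formula. Concretely, for each $v\mid\infty$ one has
$$
\max(|\alpha|_v,1)^{-N-A}\min(|\alpha|_v,1)^{-A}=\max(|\alpha|_v,1)^{-N}\,|\alpha|_v^{-A},
$$
assuming $|\alpha|_v>0$ (which holds at all infinite $v$ since $\alpha\neq 0$ and the components are in $F$; note that $|\alpha|_v=\max_j|\alpha_j|_v$ is positive iff some $\alpha_j\neq 0$, which is automatic). Taking the product over $v\mid\infty$, it suffices to bound $\prod_{v\mid\infty}|\alpha|_v^{-A}$ by a constant depending only on $A$ and $\beta$.

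The key step is to show $\prod_{v\mid\infty}|\alpha|_v\gg_{\beta} 1$. Pick an index $i$ with $\alpha_i\neq 0$; then $\beta\alpha_i\in\OO\cap F^\times$ since $\alpha\in\beta^{-1}\OO^n$. By the product formula applied to the nonzero element $\beta\alpha_i$,
$$
\prod_{v\mid\infty}|\beta\alpha_i|_v\;\cdot\;\prod_{v\nmid\infty}|\beta\alpha_i|_v=1,
$$
and $|\beta\alpha_i|_v\leq 1$ for every non-Archimedean $v$ because $\beta\alpha_i\in\OO$. Hence $\prod_{v\mid\infty}|\beta\alpha_i|_v\geq 1$, which gives
$$
\prod_{v\mid\infty}|\alpha_i|_v\geq\prod_{v\mid\infty}|\beta|_v^{-1}=:C_\beta,
$$
a positive constant depending only on $\beta$. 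Since $|\alpha|_v\geq|\alpha_i|_v$ at every infinite $v$, we conclude $\prod_{v\mid\infty}|\alpha|_v\geq C_\beta$, and therefore $\prod_{v\mid\infty}|\alpha|_v^{-A}\leq C_\beta^{-A}$, yielding the desired bound.

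There is essentially no obstacle here; the only subtlety is remembering to use the product formula to convert the integrality condition at finite places into a lower bound at the infinite places, and to use the correct pointwise identity $\max(x,1)\min(x,1)=x$ to reduce the mixed max/min expression to a pure $|\alpha|_v^{-A}$ factor.
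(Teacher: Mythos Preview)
Your proof is correct and follows the same approach as the paper: use $\max(x,1)\min(x,1)=x$ to reduce to bounding $\prod_{v\mid\infty}|\alpha|_v^{-A}$, then control this by the product formula and integrality. The paper's own proof is in fact terser than yours---it simply records the identity
\[
\prod_{v\mid\infty}\max(|\alpha|_v,1)^{-N-A}\min(|\alpha|_v,1)^{-A}=|\alpha|_\infty^{-A}\prod_{v\mid\infty}\max(|\alpha|_v,1)^{-N}
\]
and leaves the bound $|\alpha|_\infty^{-A}\ll_{A,\beta}1$ implicit; your explicit selection of a nonzero coordinate $\alpha_i$ and appeal to the product formula for $\beta\alpha_i\in\OO\cap F^\times$ is exactly the missing justification.
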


\begin{proof}
One has 
\begin{align*}
\prod_{v|\infty}\left(\max(|\alpha|_v,1)^{-N-A}\min(|\alpha|_v,1)^{-A}\right)=&\ \prod_{v|\infty}\left(\max(|\alpha|_v,1)^{-N}|\alpha|_v^{-A}\right)\\
=&\ |\alpha|_\infty^{-A}\prod_{v|\infty}\left(\max(|\alpha|_v,1)^{-N}\right).
\end{align*}
\end{proof}

For the remainder of the section we fix 
\begin{align}
(f_1,f_2) \in \mathcal{S}_{BK}(X(\A_F),K) \times \mathcal{S}(V(\A_F))\,,
\end{align}
where $K^\infty$ is an $\mathrm{Sp}_6(\A_F^\infty)$-conjugate of $\mathrm{Sp}_6(\widehat{\OO})$.  All implicit constants are allowed to depend on $f_1,f_2$.

\begin{prop} \label{prop:1cell}
The sum 
\begin{align}
\sum_{\xi \in V^{\mathrm{sm}}(F)} \int_{N_0(\A_F) \backslash G(\A_F)} |f_1(\gamma_0 g)\rho(g)f_2(\xi)| dg
\end{align}
converges.
\end{prop}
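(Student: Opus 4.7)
The plan is to bound the summand place-by-place using the local estimates from Sections \ref{sec:unr}--\ref{sec:bound:arch} and then to sum the resulting global bound as a lattice sum.

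First, write $(f_1,f_2)$ as a finite linear combination of pure tensors $\otimes_v(f_{1,v},f_{2,v})$, so that the integral over $N_0(\A_F)\backslash G(\A_F)$ factors as a product of local integrals, one at each place. Let $S$ be a finite set of places containing the Archimedean places together with every non-Archimedean $v$ where $\psi_v$ or $\chi_{\mathcal{Q},v}$ is ramified, where $J_i\notin \GL_{d_i}(\OO_v)$, where $K_v\neq \Sp_6(\OO_v)$, where $\rho(k)\one_{V(\OO_v)}\neq \one_{V(\OO_v)}$ for some $k\in \SL_2^3(\OO_v)$, or where $f_{1,v}\neq b_v$ or $f_{2,v}\neq \one_{V(\OO_v)}$. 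For each $\xi \in V'(F)$, I would apply Lemma \ref{lem:abs:open} at the places $v\notin S$; Proposition \ref{prop:open:bound:na} at the finite $v\in S$; and Propositions \ref{prop:open:bound:a} and \ref{prop:open:bound:a2} at the Archimedean places.

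Second, combine these local bounds. Lemma \ref{lem:abs:open} forces $\xi\in V(\OO_v)$ for every $v\notin S$, and Proposition \ref{prop:open:bound:na} confines $\xi$ to a compact subset of $V(F_v)$ for each finite $v\in S$. Hence the set of $\xi\in V'(F)$ contributing to the sum is contained in $\beta^{-1}\OO^{d_1+d_2+d_3}$ for some $\beta\in \OO\cap F^\times$, and I would split the sum according to whether all $\xi_i$ are nonzero or exactly one $\xi_i$ vanishes (two coordinates vanishing being forbidden by $V'$).

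Third, in the generic case the Archimedean contribution at each $v\mid\infty$ is bounded by
\[
\prod_{i=1}^3 \max(|\xi_i|_v,1)^{-N_i}\min(|\xi_i|_v,1)^{-1-d_i/2}
\]
with $N_i$ arbitrarily large (Proposition \ref{prop:open:bound:a}). Applying Lemma \ref{lem:prod:rule} componentwise to $\xi_i\in \beta^{-1}\OO^{d_i}$ with $A=1+d_i/2$ eliminates the singular $\min$-factor globally and leaves a bound of the shape $C_i\prod_{v\mid\infty}\max(|\xi_i|_v,1)^{-N_i+1+d_i/2}$. The non-Archimedean divisor-type factor $\prod_{v\notin S}(\mathrm{ord}_v(\xi_i)+1)^3$ is sub-polynomial in the Archimedean norm of $\xi_i$ and is absorbed by an arbitrarily small power of $\max(|\xi_i|_v,1)$. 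For $N_i$ large, the resulting triple lattice sum over $\beta^{-1}\OO^{d_i}$ converges by a standard Epstein-zeta-type estimate. The analogous argument using Proposition \ref{prop:open:bound:a2} together with the second case of Lemma \ref{lem:abs:open} handles each of the three subsets where one $\xi_i$ vanishes.

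The main obstacle is that at each Archimedean place individually, the factor $\min(|\xi_i|_v,1)^{-1-d_i/2}$ is singular at $\xi_i=0$ and would destroy summability if taken place-by-place; Lemma \ref{lem:prod:rule} is precisely the tool that uses global integrality of $\xi_i$ to trade this product of local singular factors for a tame global one, after which the estimate becomes routine bookkeeping.
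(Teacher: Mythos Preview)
Your proposal is correct and follows essentially the same route as the paper's own proof: choose a finite set $S$ of bad places, apply Lemma \ref{lem:abs:open} outside $S$, Proposition \ref{prop:open:bound:na} at the finite places in $S$, and Propositions \ref{prop:open:bound:a}--\ref{prop:open:bound:a2} at infinity; then split according to whether some $\xi_i$ vanishes and invoke Lemma \ref{lem:prod:rule} to convert the Archimedean $\min$-singularity into a tame decay factor, leaving a convergent lattice sum. The only cosmetic difference is that the paper absorbs the divisor factor $(\mathrm{ord}_v(\xi_i)+1)^3$ into an $\varepsilon$-power $|\xi_i|_v^{\pm\varepsilon}$ at each $v\notin S$ before summing, whereas you phrase the same step as ``sub-polynomial in the Archimedean norm''; both are the same product-formula bound.
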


\begin{proof}
Let $S$ be a finite set of places of $F$ including the infinite places such that {$\psi$ is unramified outside of $S$, $\rho(k)\one_{V(\widehat{\OO}^S)}=\one_{V(\widehat{\OO}^S)}$ for $k \in \SL_2(\widehat{\OO}^S)$}, $f_1^S=b^S$ and $f_2^S=\one_{V(\widehat{\OO}^S)}$.  Let $V'' \subset V$ be the open subscheme of points $(\xi_1,\xi_2,\xi_3)$ such that no $\xi_i=0$.  
Let $\varepsilon>0$.  Using Lemma \ref{lem:abs:open} and Propositions \ref{prop:open:bound:na} and \ref{prop:open:bound:a} we have a bound on the sum {over $\xi \in V''(F)$} of a constant depending on $\varepsilon$ times
\begin{align*}
\sum_{\xi \in \beta^{-1}V(\OO) \cap V''(F)} \prod_{i=1}^3\left( \prod_{v |\infty} 
\max(|\xi_i|_v,1)^{-N_i}\min(|\xi_i|_v,1)^{-1-d_i/2}
\prod_{v \in S -\infty} |\xi_i|_v^{{-}1-d_i/2}\prod_{v \not \in S}(|\xi_i|_v^{1-d_i/2+\varepsilon})\right),
\end{align*}
for some $\beta \in F^\times \cap \OO$ divisible only by places in $S$.  Using Lemma \ref{lem:prod:rule} we see that this sum is dominated by a constant depending on $N \geq 0$ times
\begin{align*}
\sum_{\xi \in \beta^{-1}V(\OO) \cap V''(F)} \prod_{i=1}^3 \left(\prod_{v |\infty} 
\max(|\xi_i|_v,1)^{-N}
\prod_{v \in S -\infty} |\xi_i|_v^{{-}1-d_i/2}\prod_{v \not \in S}(|\xi_i|_v^{1-d_i/2+\varepsilon})\right).
\end{align*}
This is finite for $N$ large enough.

We still must bound the contribution of $\xi \in V^{\mathrm{sm}}(F)-V''(F)$.  By symmetry, it suffices to consider the contribution of $\xi \in V(F)$ such that $\xi_1=0$ and $\xi_2$ and $\xi_3$ are nonzero.  This contribution can be bounded using Lemma \ref{lem:abs:open}, Propositions \ref{prop:open:bound:na} and \ref{prop:open:bound:a2} and the argument above. 
\end{proof}

\section{A vanishing statement} \label{sec:appendix}

As a public service we state and prove the following theorem in greater generality than we need for the current paper.  Let $v_0$ be a place of $F$.  Let 
\begin{equation}\label{Xn}
   X_n:=[P_n,P_n] \backslash \mathrm{Sp}_{2n}\,, 
\end{equation}
where $P_n \leq \mathrm{Sp}_{2n}$ is the Siegel parabolic of \cite{Getz:Liu:BK}, and let
$$
\mathcal{S}_{BK}(X_n(F_{v_0}),K_{nv_0})
$$
be the Schwartz space of \cite{Getz:Liu:BK}, where $K_{nv_0} \leq \mathrm{Sp}_{2n}(F_{v_0})$ is a maximal compact subgroup that is conjugate to $\mathrm{Sp}_{2n}(\OO_{v_0})$ in the non-Archimedean case.  In loc.~cit.~global analogues 
$$
\mathcal{S}_{BK}(X_n(\A_F),K_n)
$$ 
were also defined.  
We again have a Fourier transform 
$$
\mathcal{F}:\mathcal{S}_{BK}(X_n(F_{v_0}),K_{nv_0}) \lto \mathcal{S}_{BK}(X_n(F_{v_0}),K_{nv_0})
$$ and a global analogue. When $n=3$ all of these reduce to the setting of the current paper.

Let $C_c^\infty(X_n(F_{v_0}),K_{nv_0})$ be the space of compactly supported smooth $K_{nv_0}$-finite functions on $X_n(F_{v_0})$.  It is equal to $C_c^\infty(X_n(F_{v_0}))$ if $v_0$ is non-Archimedean.  By \cite[Proposition 4.7]{Getz:Liu:BK} one has 
\begin{align}
    C_c^\infty(X_n(F_{v_0}),K_{nv_0}) <\mathcal{S}_{BK}(X_n(F_{v_0}),K_{nv_0})\,.
\end{align}

Let $M_n \leq P_n$ be the Levi subgroup of block diagonal matrices and let
\begin{align*}
    \omega:M_n(R) &\lto R^\times\\
    \left(\begin{smallmatrix} A & \\ & {}^tA^{-1} \end{smallmatrix} \right) &\longmapsto \det A\,.
\end{align*}
For $f \in \mathcal{S}_{BK}(X_n(\A_F),K_n)$, characters $\chi:F^\times \backslash \A_F^\times \to \CC^\times$ and $s \in \CC$ set
\begin{align}
    f_{\chi_s}(x):=\int_{M_n^{\mathrm{ab}}(\A_F)}\delta_P(m)^{1/2}\chi_s(\omega(m))f(m^{-1}x)dm\,.
\end{align}
This is a section of the induced representation $I(\chi_s)$ in the notation of \cite{Getz:Liu:BK}.  We will also use the obvious local analogue of this notation.  In the proofs in the rest of this section we will require the usual intertwining operator
$$
M_{w_0}:I(\chi_s) \lto I(\bar{\chi}_{-s})
$$
and the normalized version $M_{w_0}^*$ that appears in the work of Piatetski-Shapiro and Rallis and Ikeda.  We will use this in both local and global contexts.  We refer to \cite[\S 3]{Getz:Liu:BK} for notation.  
We let $E(g,f_{\chi_s})$ be the usual degenerate Siegel Eisenstein series \cite[(1.3.1)]{Getz:Liu:BK} on $\mathrm{Sp}_{2n}(\A_F)$.

\begin{thm}[Kudla-Rallis] \label{thm:A:BK}
If $f_{v_0} \in C_c^\infty(X_n(F_{v_0}),K_{nv_0}) <\mathcal{S}(X_n(F_{v_0}),K_{nv_0})$ for some non-Archimedean place $v_0$ of $F$, then for any $f^{v_0} \in \mathcal{S}_{BK}(X_n(\A_F^{v_0}),K_n^{v_0})$ 
$$
\mathrm{Res}_{s=\frac{n+1}{2}-m}E(g,\mathcal{F}(f^{v_0}f_{v_0})_{1_s})=0
$$
for integers $0 \leq m < \frac{n+1}{2}$, 
and 
$$
\mathrm{Res}_{s=\frac{n-1}{2}-m}E(g,\mathcal{F}(f^{v_0}f_{v_0})_{\chi_s})=0
$$
for quadratic characters $\chi$ and integers $0 \leq m < \frac{n-1}{2}$.
In particular, when $n=3$ the function $\mathcal{F}(f^{v_0}f_{v_0})$ satisfies assumption \eqref{A:BK}.
\end{thm}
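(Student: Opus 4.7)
The plan is to use the explicit construction of the Fourier transform $\mathcal{F}$ from \cite{Getz:Liu:BK} in terms of a normalized global intertwining operator, combined with the Kudla--Rallis analysis of residues of degenerate Siegel Eisenstein series on $\mathrm{Sp}_{2n}$.

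First, I would rewrite $\mathcal{F}(f)_{\chi_s}$ using \cite[\S 4]{Getz:Liu:BK}: up to a scalar ratio of completed $L$-functions depending only on $\chi$ and $s$, the section $\mathcal{F}(f)_{\chi_s}$ equals $M_{w_0}(f_{\bar{\chi}_{-s}})$. Combined with the standard functional equation for degenerate Siegel Eisenstein series, this expresses $\mathrm{Res}_{s=s_0}E(g,\mathcal{F}(f)_{\chi_s})$ as a residue involving $M_{w_0}^*(f_{\bar{\chi}_{-s}})$ at $s=s_0$, where the pole now comes only from a normalizing ratio of local $L$-factors. By multiplicativity of the intertwining operator, the residue splits as a product of local contributions over the places of $F$.

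Second, I would invoke the Kudla--Rallis description of these residues at the special points $s_0 = \frac{n+1}{2}-m$ (for $\chi=1$) and $s_0 = \frac{n-1}{2}-m$ (for quadratic $\chi \neq 1$). By the first-term identity of the regularized Siegel--Weil formula, these residues are realized by theta integrals attached to orthogonal groups whose size is determined by $m$. In representation-theoretic terms, at each place $v$ the residue factors through a local projection
$$I(\chi_{s_0}) \longrightarrow \pi_{s_0}$$
onto a specific residual (theta-type) subquotient of the degenerate principal series, and the global residue vanishes whenever the local section at some single place $v$ lies in the kernel of this projection.

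Third, the crucial local input at $v_0$ is that the Mellin transform
$$C_c^\infty(X_n(F_{v_0}),K_{nv_0}) \longrightarrow I(\chi_{s_0}) \longrightarrow \pi_{s_0}$$
is identically zero. Heuristically, $\pi_{s_0}$ is detected by the asymptotic behavior of sections as one approaches the boundary of $X_n(F_{v_0})$ in the Pl\"ucker embedding, whereas compactly supported sections vanish identically near this boundary; equivalently, for $f_{v_0} \in C_c^\infty$ the section $(f_{v_0})_{\chi_s}$ is an entire function of $s$ (as a Mellin transform of a compactly supported function on $M_n^{\mathrm{ab}}(F_{v_0}) \cong F_{v_0}^\times$), and its image under $M_{w_0,v_0}^*$ at $s=s_0$ lies in the kernel of the projection to $\pi_{s_0}$. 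Combined with the first two steps, this forces the local factor at $v_0$ of the residue to vanish, and hence the global residue to vanish.

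The main obstacle is the local vanishing in Step~3: one must identify the correct residual quotient $\pi_{s_0}$ of the degenerate principal series $I(\chi_{s_0})$ of $\mathrm{Sp}_{2n}(F_{v_0})$, verify that it is detected by boundary asymptotics in the Pl\"ucker picture used in \cite{Getz:Liu:BK}, and check compatibility with the normalized intertwining operator $M_{w_0,v_0}^*$ at $s=s_0$. These facts are all consequences of Kudla and Rallis' detailed structure theory of degenerate principal series on $\mathrm{Sp}_{2n}$, and it is this reduction (rather than any new representation-theoretic input) which should constitute the bulk of the argument in \S \ref{sec:appendix}.
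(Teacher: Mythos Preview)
Your overall strategy matches the paper's: both reduce the residue vanishing to a local statement at the single non-Archimedean place $v_0$, and both ultimately appeal to the Kudla--Rallis structure theory of degenerate principal series to settle that local statement. In this sense your plan is correct.

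There are, however, a couple of places where your route diverges from the paper's and where your heuristics could mislead you. First, the detour through the functional equation of Eisenstein series is unnecessary. The paper works directly with the residue map $A_{-1}:\Phi(s)\mapsto \mathrm{Res}_{s=s_0}E(g,\Phi(s))$ and the identity $\mathcal{F}(f)_{\chi_s}=M_{w_0}^*f_{\bar\chi_{-s}}$ from \cite[Theorem~4.4]{Getz:Liu:BK}; since $\mathcal{F}(f)_{\chi_s}$ is holomorphic at $s_0$ one can evaluate there and compare with standard sections, which span $I(\chi_{s_0})$. Second, the local criterion is phrased in the paper not as vanishing under a projection to a quotient $\pi_{s_0}$, but as membership of $\mathcal{F}(f_{v_0})_{\chi_{v_0 s_0}}$ in the specific submodule $R_n(V_1)\cap R_n(V_2)\leq I(\chi_{v_0 s_0})$ in the notation of \cite[Proposition~4.2]{Kudla:Rallis:firstterm}; this is of course equivalent to your formulation, but it is the concrete object you will need. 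Third, and most importantly, your ``boundary asymptotics in the Pl\"ucker picture'' heuristic is not how this local membership is actually established. The point is rather that $M_{w_0}^*$ agrees, up to a factor holomorphic in $\mathrm{Re}(s)>0$, with Kudla--Rallis's normalized operator $M_n^*(-s)$ (see \cite[Proposition~6.5]{Kudla:Rallis:Degenerate}), and then one invokes \cite[\S 6]{Kudla:Rallis:Degenerate} directly, as in the proof of \cite[Theorem~4.12]{Kudla:Rallis:firstterm}, to place the image of the (entire) section $f_{v_0,\bar\chi_{-s}}$ in $R_n(V_1)\cap R_n(V_2)$. So the last step is a citation, not a new asymptotic argument.
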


\begin{proof}  
This is a refinement of \cite[Theorem 4.12]{Kudla:Rallis:firstterm}.  Unfortunately Kudla and Rallis have different assumptions regarding sections, so we explain how to deduce the theorem {using the argument of } loc.~cit.  We also warn the reader that in \cite{Kudla:Rallis:firstterm} the Kudla and Rallis assume that the number field in question is totally real.  However this is not used in the results we will quote below.

Let $s_0 \in \left\{\tfrac{n+1}{2}-m: m \in \ZZ, 0 \leq m < \tfrac{n+1}{2} \right\}$.  Then one has a $\mathrm{Sp}_{2n}(\A_F)$-intertwining map
\begin{align*}
A_{-1}:I(\chi_s)  &\lto \mathcal{A}(\mathrm{Sp}_{2n})\\
\Phi(s)&\longmapsto \mathrm{Res}_{s=s_0}E(g,\Phi(s))\,,
\end{align*}
where $\mathcal{A}(\mathrm{Sp}_{2n})$ is the space of automorphic forms on $\mathrm{Sp}_{2n}(\A_F)$.  If $\Phi(s)=\Phi_{v_0}(s)\Phi^{v_0}(s)$ is a standard section such that $\Phi_{v_0}(s)$ is in the space denoted by 
$$
R_n(V_1)\cap R_n(V_2) \leq I(\chi_{v_0s})
$$ 
in \cite[Proposition 4.2]{Kudla:Rallis:firstterm} then $A_{-1}(\Phi(s))=0$ by loc.~cit.  Here a standard section is a section whose restriction to a given maximal compact subgroup of $\mathrm{Sp}_{2n}(\A_F)$ is independent of $s$.  

For $f \in \mathcal{S}_{BK}(X(\A_F),K)$ the section $\mathcal{F}(f)_{\chi_s}$ is not standard, but since $d(s,\chi):=\prod_{v}d(s,\chi_v)$ is absolutely convergent for $\mathrm{Re}(s)>0$ 
the section $\mathcal{F}(f)_{\chi_s}$ is holomorphic at $s_0$ (see \cite[\S 3]{Getz:Liu:BK}).
Since the values of standard sections at $s_0$ span the space of $K$-finite vectors in $I(\chi_{s_0})$ as a vector space we deduce that 
$$
\mathrm{Res}_{s=s_0}E(g,\mathcal{F}(f)_{\chi_s})=0
$$
if $\mathcal{F}(f)_{\chi_{v_0s_0}} \in R_n(V_1) \cap R(V_2)$.  

We claim that
$\mathcal{F}(f)_{{v_0}\chi_s} \in R_n(V_1) \cap R(V_2)$
whenever $f \in C_c^\infty(X(F_{v_0}))$.  Proving the claim will complete the proof of the theorem.  
Now 
$$
\mathcal{F}(f)_{\chi_{v_0s}}=M_{w_0}^*f_{\bar{\chi}_{v_0-s}}
$$
by \cite[Theorem 4.4]{Getz:Liu:BK}.
In the notation of \cite[Proposition 6.5]{Kudla:Rallis:Degenerate} one has
$$
M_{n}^*(s)=\frac{1}{L(s-\tfrac{n-1}{2},\chi_{v_0})\prod_{r=1}^{\lfloor n/2 \rfloor}L(2s-n{+}2r,\chi^2_{v_0})}M_{w_0}\,.
$$
In particular $M_{w_0}^*$ (acting on sections in $I(\bar{\chi}_{v_0-s})$) is a nonvanishing entire function times $L(1+s+\tfrac{n-1}{2},\chi_{v_0})\prod_{i=1}^{\lfloor n/2 \rfloor}L(1+2s+n-2r,\chi^2_{v_0})M_n^*(-s)$, and thus $M_{w_0}^*f_{\bar{\chi}_{v_0-s}}$ is equal to $M_{n}^*f_{\bar{\chi}_{v_0-s}}$ up to a function that is holomorphic in $\mathrm{Re}(s)>0$.  
{Thus we can deduce our claim from \cite[\S 6]{Kudla:Rallis:Degenerate} as in the proof of \cite[Theorem 4.12]{Kudla:Rallis:firstterm}}.
\end{proof}

\section*{List of symbols}

\begin{center}
\begin{tabular}{l c r}
$b$& basic function &\eqref{b}\\
$c(x)$& cocharacter of $M$ &  \eqref{c:def} \\
$\chi_{\mathcal{Q}}$ & quadratic character &\eqref{chiQ}\\
$E(g;f_{\chi_s})$ & Eisenstein series & \eqref{Eisensteinseries}\\
$\mathcal{F}=\mathcal{F}_{BK,\psi}$ & Fourier transform on a BK space &\eqref{mathcalF}\\
$f_{\chi_s}$ &  local (global) Mellin transform  &  \eqref{fchis:local} (\eqref{fchis:global})\\
$G$ & $\SL_2^3$ & \eqref{Gembed}\\
$\gamma_i$ & representatives for $X(F)/G(F)$ & \eqref{gammas}\\
$G_{\gamma_i}$ & stabilizer of $\gamma_i$ & \eqref{Ggammas}\\
$\lvert g \rvert$& norm of Pl\"ucker embedding & \eqref{norm:def}\\
$H$ & similitude group & \eqref{H}\\
$I(f_1,f_2)$ & integral  & \eqref{Is:global}\\
$J_i$ & matrix corresponding to $\mathcal{Q}_i$ &\eqref{Ji}\\
$L(h)$& left translation action & \eqref{L(h)}\\
$\Lambda(h)$& element of $H(F)$ &\eqref{Lambdah}\\
$M$& Levi subgroup of  $P$ & \S \ref{ssec:simil}\\
$M^{ab}=[M,M] \backslash M$ &  abelianization of $M$ & \S \ref{ssec:BK}\\
$N$ & unipotent radical of $P$ & \S \ref{ssec:simil}\\
$N_0$ & stabilizer of $\gamma_0$ & \eqref{T0N0}\\
$\one_c$ & characteristic function of $\one_{[P,P](F)c(\varpi)\mathrm{Sp}_6(\OO)}$ & \eqref{onec}\\
$\omega$& character of $M$ & \eqref{omega}\\
$P$ &  Siegel parabolic & \eqref{Siegel}\\
$\mathrm{Pl}(g)$ & Pl\"ucker embedding & \eqref{Pl}\\
$\mathcal{Q}$& quadratic form on $V$ & \eqref{Q}\\
$\mathcal{Q}_i$ & quadratic form on $V_i$ &\eqref{Qi}\\
$\rho=\rho_{\psi}$ &  local (global) Weil representation &
\eqref{rholocal} (\eqref{rhoglobal})\\
$\mathcal{S}_{BK}(X(\A_F),K)$ &  BK Schwartz space $X(\A_F)$ &  \S \ref{sec:Schwartz}\\
$\mathcal{S}(V(F_v))$, $\mathcal{S}(V(\A_F))$,& usual Schwartz spaces & \S \ref{sec:intro}\\
$T$ & maximal torus of $G$ &  \eqref{T}\\
$T_0$ & subtorus of $T$ & \eqref{T0N0}\\
$\Theta_f$ & Theta function & \eqref{Theta}\\
$V_i$ & quadratic space of even dimension & \S \ref{sec:intro}\\
$V$&  $\prod_{i=1}^3V_i$ & \eqref{V}\\
$V'$ & open subscheme of $V$ & \eqref{Vprime} \\
$X$ & Braverman-Kazhdan space & \eqref{BKspaceX}\\
$Y$ & $\{ v \in V(R): \mathcal{Q}_1(v_1)=\mathcal{Q}_2(v_2)=\mathcal{Q}_3(v_3)=0\}$ &  \eqref{Y}\\
$Y^{\mathrm{sm}}$ & smooth locus in $Y$ &  \eqref{Ysm}
\end{tabular}
\end{center}


\bibliography{refs}

\def\polhk#1{\setbox0=\hbox{#1}{\ooalign{\hidewidth
  \lower1.5ex\hbox{`}\hidewidth\crcr\unhbox0}}}
\begin{thebibliography}{KMRT98}

\bibitem[BK00]{BK-lifting}
A.~Braverman and D.~Kazhdan.
\newblock {$\gamma$}-functions of representations and lifting.
\newblock {\em Geom. Funct. Anal.}, (Special Volume, Part I):237--278, 2000.
\newblock With an appendix by V. Vologodsky, GAFA 2000 (Tel Aviv, 1999).

\bibitem[BK02]{BK:normalized}
A.~Braverman and D.~Kazhdan.
\newblock Normalized intertwining operators and nilpotent elements in the
  {L}anglands dual group.
\newblock {\em Mosc. Math. J.}, 2(3):533--553, 2002.
\newblock Dedicated to Yuri I. Manin on the occasion of his 65th birthday.

\bibitem[BNS16]{BouthierNgoSakellaridis}
A.~Bouthier, B.~C. Ng{\^o}, and Y.~Sakellaridis.
\newblock On the formal arc space of a reductive monoid.
\newblock {\em Amer. J. Math.}, 138(1):81--108, 2016.

\bibitem[CN18]{Cheng:Ngo:BK}
S.~{Cheng} and B.~C. {Ngo}.
\newblock {On a conjecture of Braverman and Kazhdan}.
\newblock {\em Int. Math. Res. Not. IMRN}, (20):6177--6200, 2018.

\bibitem[Gar87]{GarrettTripleAnnals}
P.~B. Garrett.
\newblock Decomposition of {E}isenstein series: {R}ankin triple products.
\newblock {\em Ann. of Math. (2)}, 125(2):209--235, 1987.

\bibitem[{Get}18a]{Getz:RSMonoid}
J.~R. {Getz}.
\newblock {A summation formula for the Rankin-Selberg monoid via the circle
  method}.
\newblock {\em American J. of Math., to appear}, 2018.

\bibitem[Get18b]{GetzBK}
J.~R. Getz.
\newblock {Nonabelian Fourier transforms for spherical representations}.
\newblock {\em Pacific J. Math.}, 294(2):351--373, 2018.

\bibitem[GJ72]{GodementJacquetBook}
R.~Godement and H.~Jacquet.
\newblock {\em Zeta functions of simple algebras}.
\newblock Lecture Notes in Mathematics, Vol. 260. Springer-Verlag, Berlin-New
  York, 1972.

\bibitem[GL17]{Getz:Liu:BK}
J.~R. {Getz} and B.~{Liu}.
\newblock {A refined Poisson summation formula for certain Braverman-Kazhdan
  spaces}.
\newblock {\em ArXiv e-prints}, July 2017.

\bibitem[HK92]{HarrisKudla:Arithmetic}
M.~Harris and S.~Kudla.
\newblock Arithmetic automorphic forms for the nonholomorphic discrete series
  of {${\rm GSp}(2)$}.
\newblock {\em Duke Math. J.}, 66(1):59--121, 1992.

\bibitem[Ike92]{Ikeda:poles:triple}
T.~Ikeda.
\newblock On the location of poles of the triple {$L$}-functions.
\newblock {\em Compositio Math.}, 83(2):187--237, 1992.

\bibitem[KMRT98]{Involutions}
M.-A. Knus, A.~Merkurjev, M.~Rost, and J.-P. Tignol.
\newblock {\em The book of involutions}, volume~44 of {\em American
  Mathematical Society Colloquium Publications}.
\newblock American Mathematical Society, Providence, RI, 1998.
\newblock With a preface in French by J. Tits.

\bibitem[KR92]{Kudla:Rallis:Degenerate}
S.~S. Kudla and S.~Rallis.
\newblock Ramified degenerate principal series representations for {${\rm
  Sp}(n)$}.
\newblock {\em Israel J. Math.}, 78(2-3):209--256, 1992.

\bibitem[KR94]{Kudla:Rallis:firstterm}
S.~S. Kudla and S.~Rallis.
\newblock A regularized {S}iegel-{W}eil formula: the first term identity.
\newblock {\em Ann. of Math. (2)}, 140(1):1--80, 1994.

\bibitem[Laf14]{LafforgueJJM}
L.~Lafforgue.
\newblock Noyaux du transfert automorphe de {L}anglands et formules de
  {P}oisson non lin\'eaires.
\newblock {\em Jpn. J. Math.}, 9(1):1--68, 2014.

\bibitem[Lan04]{LanglandsBE}
R.~P. Langlands.
\newblock Beyond endoscopy.
\newblock In {\em Contributions to automorphic forms, geometry, and number
  theory}, pages 611--697. Johns Hopkins Univ. Press, Baltimore, MD, 2004.

\bibitem[Li17]{WWLiSat}
W.-W. Li.
\newblock Basic functions and unramified local {$L$}-factors for split groups.
\newblock {\em Sci. China Math.}, 60(5):777--812, 2017.

\bibitem[{Li}18a]{WWLi:Towards}
W.-W. {Li}.
\newblock {\em {Towards generalized prehomogeneous zeta integrals}}, volume
  2221 of {\em Lecture Notes in Mathematics}.
\newblock Springer-Verlag, Berlin, 2018.

\bibitem[{Li}18b]{WWLi:Zeta}
W.-W. {Li}.
\newblock {\em {Zeta integrals, Schwartz spaces and local functional
  equations}}, volume 2228 of {\em Lecture Notes in Mathematics}.
\newblock Springer-Verlag, Berlin, 2018.

\bibitem[PSR87]{PSRallisTriple}
I.~Piatetski-Shapiro and S.~Rallis.
\newblock Rankin triple {$L$} functions.
\newblock {\em Compositio Math.}, 64(1):31--115, 1987.

\bibitem[Sak12]{SakellaridisSph}
Y.~Sakellaridis.
\newblock Spherical varieties and integral representations of {$L$}-functions.
\newblock {\em Algebra Number Theory}, 6(4):611--667, 2012.

\bibitem[Sak18]{SakellaridisSat}
Y.~Sakellaridis.
\newblock {\em {Inverse Satake transforms}}.
\newblock Geometric Aspects of the Trace Formula, M\"uller, Shin, Templier
  (Eds.). Simons Symposia. Springer, 2018.

\bibitem[Sha18a]{Shahidi:LF}
F.~Shahidi.
\newblock {\em Local Factors, Reciprocity and Vinberg Monoids}, volume~2 of
  {\em Prime Numbers and Representation Theory”, Lecture Series of Modern
  Number Theory}.
\newblock Science Press, Beijing, 2018.

\bibitem[Sha18b]{Shahidi:FT}
F.~Shahidi.
\newblock {\em On generalized {F}ourier transforms for standard
  ${L}$-functions}.
\newblock Geometric Aspects of the Trace Formula, M\"uller, Shin, Templier
  (Eds.). Simons Symposia. Springer, 2018.

\bibitem[Vin95]{Vinberg:reductive:semigroups}
E.~B. Vinberg.
\newblock On reductive algebraic semigroups.
\newblock In {\em Lie groups and {L}ie algebras: {E}. {B}. {D}ynkin's
  {S}eminar}, volume 169 of {\em Amer. Math. Soc. Transl. Ser. 2}, pages
  145--182. Amer. Math. Soc., Providence, RI, 1995.

\bibitem[Wei64]{Weil:Certains:groupes}
A.~Weil.
\newblock Sur certains groupes d'op\'erateurs unitaires.
\newblock {\em Acta Math.}, 111:143--211, 1964.

\bibitem[YZZ13]{YYZ:book}
X.~Yuan, S.-W. Zhang, and W.~Zhang.
\newblock {\em The {G}ross-{Z}agier formula on {S}himura curves}, volume 184 of
  {\em Annals of Mathematics Studies}.
\newblock Princeton University Press, Princeton, NJ, 2013.

\end{thebibliography}
\bibliographystyle{alpha}

\end{document}